\documentclass[12pt]{article}
\usepackage{amssymb}
\usepackage{amsmath}
\usepackage{amsthm}
\usepackage{mathtools}
\usepackage{enumitem}
\usepackage{xcolor}

\newcommand{\Ac}{\mathcal{A}}
\newcommand{\Jc}{\mathcal{J}}

\numberwithin{equation}{section}

\newtheorem{theorem}{Theorem}[section]
\newtheorem{proposition}[theorem]{Proposition}
\newtheorem{lemma}[theorem]{Lemma}
\newtheorem{corollary}[theorem]{Corollary}
\newtheorem{remark}[theorem]{Remark}

\newcommand{\Fc}{\mathcal{F}}
\renewcommand{\S}{\mathcal{S}}
\newcommand{\Ru}{\mathcal{R}}
\newcommand{\Uc}{\mathcal{U}}
\newcommand{\Lc}{\mathcal{L}}
\newcommand{\C}{\mathbb{C}}
\newcommand{\Z}{\mathbb{Z}}
\newcommand{\End}{\mathrm{End}}
\newcommand{\gm}{\gamma}
\newcommand{\dt}{\delta}
\newcommand{\ot}{\otimes}
\newcommand{\eps}{\epsilon}
\newcommand{\al}{\alpha}
\newcommand{\bt}{\beta}

\begin{document}

\title{Graded twisted reflection matrices of $GL$-type}
\author{D. Algethami${}^{\dag}$, A. Mudrov${}^{\ddag}$
\vspace{10pt}\\
\small ${\dag}$ Department of Mathematics, College of Science,\\
\small University of Bisha, P.O. Box 551, Bisha 61922, Saudi Arabia,
\vspace{10pt}\\
\small ${\ddag}$ Moscow Institute of Physics and Technology,\\
\small 9 Institutskiy per., Dolgoprudny, Moscow Region, 141701, Russia,
\vspace{10pt}\\
\small e-mail: dalgethami@ub.edu.sa, mudrov.ai@mipt.ru
}
\date{}
\maketitle

\begin{abstract}
We classify constant numerical solutions of the  $\Z_2$-graded  twisted
reflection equation for the fundamental $GL$-type R-matrix and  an arbitrary
parity sequence, without assuming evenness or invertibility.  The classification is based on a four-index locality principle
established for this type of reflection equation.  We determine the singular and invertible
solutions and recover previously conjectured matrices for the  minimal
symmetric grading.  Even invertible solutions comply with the
recent classification of graded Satake diagrams.
The classical stabilizer of a full-support lower-triangular non-even family is a
Lie superalgebra whose even part is an electrical Lie algebra.
\end{abstract}

{\small \underline{2020 AMS Subject Classification}: 17B37, 81R50}.\par
\medskip
{\small\noindent\hangindent=5.4em\hangafter=1
\underline{Key words}: Graded twisted reflection equation, quantum supergroup, K-matrix, graded Satake diagrams, electrical Lie algebra, Lie superalgebra\par}

\section{Introduction}
The term reflection equation (RE) refers to several quadratic identities
involving a matrix $K$ and a fixed solution $R$ of the Yang--Baxter
equation.  These identities first appeared as consistency conditions for
factorized scattering on a half-line and became a standard algebraic tool for
integrable systems with open boundary conditions \cite{Cherednik,Sklyanin}.
Their constant forms define covariant quadratic algebras
\cite{KulishSklyanin,KulishSasakiSchwiebert} and occur in applications including
equivariant deformation quantization \cite{DoninMudrovEQ}.  Even for the standard R-matrices coming
from quantum groups, the classification of numerical solutions is a
long-standing problem.  It was solved for the non-twisted reflection equation
of the general linear quantum group in \cite{M1} and for its {$\Z_2$-graded} counterpart
in \cite{AMS24}.  The present paper addresses numerical solutions of the
twisted reflection equation for general linear quantum supergroups.

Solutions of the reflection equation, often called K-matrices, are also related
to coideal subalgebras and quantum symmetric pairs \cite{NS,NDS}.  Their
systematic construction goes back to Letzter and subsequent developments
\cite{Letzter,KolbRE,Kolb}.  Like universal R-matrices, universal K-matrices
produce solutions of the reflection equation in individual representations.
Such elements have been constructed for a broad range of non-graded quantum
groups in \cite{BalagovicKolb,RegelskisVlaar,AppelVlaar}.  The {universal} and
representation-theoretic approaches thus describe the same object at two
levels: the {latter} gives explicit matrices in a fixed representation, whereas
the {former} seeks their representation-independent origin.

The theory of quantum supergroups is a well-established area of quantum
algebra; see, for instance, \cite{Yamane}.  Graded reflection equations and
their applications to integrable models have also been studied for quite some time
\cite{Arnaudon,LimaSantos}.  A systematic representation-theoretic  approach to
 the supersymmetric reflection equation has developed more recently through
quantum supersymmetric pairs \cite{Shen,ShenWang}, quantum super-spherical
pairs \cite{AMS}, and graded Satake diagrams \cite{AMSsatake}.  The diversity
of non-isomorphic Borel subalgebras gives rise to a corresponding diversity of
supersymmetric pairs and potential K-matrices.  Consequently, different
parity sequences can lead to genuinely different problems and cannot in general be reduced to a mere relabelling.

Constant solutions of the non-twisted $\Z_2$-graded reflection equation of
$GL$-type were classified for every  parity sequence  in \cite{AMS24}.  They are
all even and can be obtained from the non-graded list by the evenness restriction imposed by the
parity sequence.  The twisted equation behaves differently: it allows for non-even solutions.  For a constant parity sequence, the non-graded twisted reflection equation is treated in \cite{NS}.  A general classification without
evenness or invertibility restrictions remained open.

We provide {such a} classification for the fundamental $GL$-type R-matrix \cite{Zhang} and an
arbitrary  parity sequence. The deformation parameter $q$ can take any nonzero complex value other than a 4th root of unity.  The first difference from the non-twisted case is
already visible in dimension two: when the two basis vectors have opposite parities, a nonzero odd entry may
occur below the diagonal and is quadratically related to the diagonal entries.
In higher dimensions this phenomenon produces two non-even families in
addition to the even ones.

The {classification} rests on  two complementary locality facts.  First, the equation
restricts to every principal subspace: the corresponding principal block of a
K-matrix satisfies the same reflection equation with the induced
grading (Lemma~\ref{lem:locality}).  Conversely,  the full equation is  determined by its principal
subspaces  of dimension at most four, since every residual coefficient involves
at most four basis indices {(Corollary~\ref{cor:fourlocal})}.  We refer to the latter statement as the four-index
locality principle.  The analysis is cumulative.   Dimension  two identifies the
 possible  local blocks,  dimension  three determines how a third index can be
attached, and  dimension  four rules out incompatible configurations on disjoint
supports.  These local restrictions assemble into a global structure theorem {(Theorem~\ref{thm:structureall})},
from which the singular and invertible classifications follow.   Every invertible
even solution is accounted for, at the classical level, by a type~A graded
Satake diagram from \cite{AMSsatake}; see Subsection~\ref{sec:satakelower}.
For the minimal symmetric  grading $(1^m,0^N,1^m)$, the obtained result
recovers the two even families conjectured in \cite{AMS}.

The global classification separates {K-matrices} having a nonzero entry above the diagonal
from lower-triangular solutions.  The former are necessarily even, whereas the lower-triangular
class contains both even and non-even solutions.  The precise support conditions and
parametrizations are stated in Section~\ref{sec:structure}.

{
There are two lower-triangular full-support (all lower-triangular entries nonzero) families of solutions.
One of them corresponds to a non-graded Satake diagram of all white nodes with no arcs, see Subsection \ref{sec:satakelower}.
 This family comprises  even matrices that are fixed at $q=1$
by the so-called electrical Lie subalgebras of type $A$, see   \cite{BGG}.
 These algebras originate 
in electrical-network theory \cite{LamPylyavskyy}; related electrical varieties
admit an interpretation as vertex integrable statistical models
\cite{GorbounovTalalaev}.

K-matrices from the other lower-triangular  family are not even.  All but one of the indices have the same parity; let $n$ be their number. In the classical limit, such a matrix is fixed by  a subalgebra $\mathfrak{k}$ in $\mathfrak{g}\simeq \mathfrak{gl}(1|n)$ whose
even part is isomorphic to an electrical Lie subalgebra.
The Lie superalgebra $\mathfrak{k}$ is isomorphic to $\mathfrak{osp}(1|n-1)$ when $n$ is odd. For even $n$, the corresponding skew-symmetric form is degenerate with a one-dimensional kernel, while $\mathfrak{k}$ has a one-dimensional even center.  This gives a natural
supersymmetric extension of electrical Lie algebras of type $A$, see Section~\ref{sec:superelectrical}.

}

\section{ Notation and the  graded twisted reflection equation}\label{sec:conv}
{Throughout the paper, $q\in\C^\times$ and $q^4\ne1$.}
Let $V=\C^d$, where $d\ge1$, have an ordered homogeneous basis
$v_1,\ldots,v_d$ with arbitrary parity
$$
 p_i=|i|\in\{0,1\},\qquad s_i=(-1)^{p_i}.
$$
{Write $I=\{1,\ldots,d\}$ for the set of basis indices.}
The order of the basis and the parity sequence $p_1,\ldots,p_d$ are fixed
throughout.  Whenever a subset of basis indices is considered, it is kept in
the order inherited from ${I}$, with the corresponding restricted
parities.   For $\eps\in\{0,1\}$ set
$$
 I_\eps=\{i\mathrel{\in}{I}:p_i=\eps\}.
$$
The grading will be called mixed if $I_0\ne\varnothing$ and $I_1\ne\varnothing$.
 For $i<j$ we write
$$
{I_{ij}}=\{r\mathrel{\in}{I}:i<r<j\}
$$
for the indices lying strictly between $i$ and $j$.
For $S\subseteq {I}$, the submatrix
$K_S=(k_{ij})_{i,j\in S}$, with the indices kept in their inherited order,
will be called the \emph{principal block} of $K$ on $S$.

 We use the standard matrix units
$$
 e_{ij}v_k=\dt_{jk}v_i,\qquad A=\sum_{i,j}A_{ij}e_{ij}.
$$
The matrix $e_{ij}$ is homogeneous of degree
$p_i+p_j$ modulo $2$.  Thus the component $A_{ij}e_{ij}$ is even when
$p_i=p_j$ and odd when $p_i\ne p_j$.   A condition such as
$p_i=p_j=\eps$ refers to the parities of the indices; it is not a statement
about  the degree of the  matrix unit $e_{ij}$.
All tensor products of endomorphisms are taken in the graded algebra.  For homogeneous endomorphisms
$x_1,x_2,y_1,y_2$, where $x_2$ and $y_1$ have degrees $|x_2|$ and $|y_1|$, respectively, one has
\begin{equation}\label{eq:gradedprod}
 (x_1\ot x_2)(y_1\ot y_2)
 =(-1)^{|x_2||y_1|}(x_1y_1)\ot(x_2y_2).
\end{equation}
This multiplication is extended bilinearly to arbitrary endomorphisms.  We use
{the} standard leg notation $K_1=K\ot1$ and $K_2=1\ot K$.

Let $P:V\ot V\to V\ot V$ be the graded flip,
$$
 P(v_i\ot v_j)=(-1)^{p_ip_j}v_j\ot v_i,
$$
and write $X_{21}=PXP$ and $X_{12}=X$ for
$X\in\End(V\ot V)$.

{Put}
\begin{equation}\label{eq:q}
 \omega=q-q^{-1}\in\C^\times.
\end{equation}
 For $\eps\in\{0,1\}$ we shall use
\begin{equation}\label{eq:gamma}
{\sigma_\eps}=(-1)^\eps,\qquad
 \gm_\eps=1+q^{-{\sigma_\eps}}.
\end{equation}
Thus $s_i={\sigma_{p_i}}$.  We also fix $\eta\in\C$ such that
\begin{equation}\label{eq:eta}
 \eta^2=-\omega=-(q-q^{-1}).
\end{equation}
Replacing $\eta$ by $-\eta$ amounts only to a change of sign of corresponding free parameters
in non-even solutions.

 The fundamental graded {R-matrix of $GL$-type} (in the opposite triangular convention to that of \cite{Zhang}) is
\begin{equation}\label{eq:R}
 R=\sum_{i,j=1}^d q^{s_i\dt_{ij}}e_{ii}\ot e_{jj}
 +\omega\sum_{j<i}(-1)^{p_j}e_{ij}\ot e_{ji}.
\end{equation}
The super-transposition is {defined as}
\begin{equation}\label{eq:st}
 (A^t)_{ij}=A_{ji}(-1)^{(p_i+p_j)p_i}.
\end{equation}
{For homogeneous $A,B\in\End(V)$, it satisfies
$$
 (AB)^t=(-1)^{|A||B|}B^tA^t,
$$
so $t$ is a graded anti-automorphism of $\End(V)$.}
The partial super-transpositions are applied to tensor factors, without an additional sign:
$$
 (x\ot y)^{t_1}=x^t\ot y,
 \qquad
 (x\ot y)^{t_2}=x\ot y^t.
$$
With these conventions $R^{t_1t_2}=R_{21}${. The} twisted reflection equation {for a matrix $K=(k_{ij})\in\End(V)$} is
\begin{equation}\label{eq:TRE}
 R_{21}K_1R_{21}^{t_1}K_2=K_2R_{12}^{t_2}K_1R_{21}.
\end{equation}
We call $K$ \emph{even} {in the usual graded sense} when $k_{ij}=0$ for $p_i\ne p_j$.

For a fixed matrix $K$, write
$$
 \S=\S(K)=\{(i,j):k_{ij}\ne0\}
$$
for its \emph{support}, and set
$$
 \S_0=\{(i,j)\in\S:i=j\},\qquad
 \S_+=\{(i,j)\in\S:i<j\},\qquad
 \S_-=\{(i,j)\in\S:i>j\}.
$$
These are the diagonal, strictly upper, and strictly lower supports, and
$\S=\S_0\sqcup\S_+\sqcup\S_-$.  We also write
$$
 \Uc=\S_0\sqcup\S_+,
 \qquad
 \Lc=\S_0\sqcup\S_-
$$
for the upper and lower supports; this notation will be used throughout.  In
particular, $K$ is upper triangular if
$\S=\Uc$, lower triangular if $\S=\Lc$, and
strictly lower triangular if $\S=\S_-$.   For an off-diagonal entry $k_{ij}$,
we call $i$ and $j$ its endpoints.

\begin{remark}\label{rem:nohiddenq}
Scalar factors divided out in arguments below are products of powers of
$q$, $q\pm1$, and $q^2+1$.  We cancel any such nonzero common factor
and may then normalize by a factor $\pm q^m$.
\end{remark}

\section{Locality principle }\label{sec:local}
Define the quadratic residual
$$
 \Fc(K)=R_{21}K_1R_{21}^{t_1}K_2-K_2R_{12}^{t_2}K_1R_{21}.
$$
{We denote by} $\Fc_{ab}^{ij}(K)$ the coefficient of
$v_a\ot v_b$ in $\Fc(K)(v_i\ot v_j)$.
{The R-matrices act on $V\ot V$ by the following formulas:}
\begin{equation}\label{eq:R21action}
 R_{21}(v_i\ot v_j)=
 \begin{cases}
 q^{s_i}v_i\ot v_i,&i=j,\\
 v_i\ot v_j+\omega(-1)^{p_ip_j}v_j\ot v_i,&i>j,\\
 v_i\ot v_j,&i<j,
 \end{cases}
\end{equation}
and, for $i\ne j$,
\begin{equation}\label{eq:toff}
 R_{21}^{t_1}(v_i\ot v_j)=R_{12}^{t_2}(v_i\ot v_j)=v_i\ot v_j,
\end{equation}
while
\begin{align}
 R_{21}^{t_1}(v_i\ot v_i)
 &=q^{s_i}v_i\ot v_i+\omega s_i\sum_{a>i}v_a\ot v_a,\label{eq:t1}\\
 R_{12}^{t_2}(v_i\ot v_i)
 &=q^{s_i}v_i\ot v_i+\omega\sum_{a>i}s_a v_a\ot v_a.\label{eq:t2}
\end{align}
{These formulas facilitate reduction of the RE as explained below.}

For $S\subseteq {I}$, set
$V_S=\operatorname{span}\{v_i:i\in S\}$ with the {induced} grading, and let
$P_S:V\to V_S$ be the coordinate projection.  {The principal block on $S$ is}
$$
 K_S=P_SK|_{V_S}=(k_{ij})_{i,j\in S}.
$$

\begin{lemma}\label{lem:locality}
{One has}
\begin{equation}\label{eq:locality}
 (P_S\ot P_S)\,\Fc_V(K)|_{V_S\ot V_S}=\Fc_{V_S}(K_S),
\end{equation}
where the right-hand side is defined by the same formulas
\textup{(\ref{eq:R}--\ref{eq:TRE})} using the induced grading on $V_S$.
\end{lemma}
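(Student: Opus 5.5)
The plan is to compare matrix coefficients directly. Fix $i,j,a,b\in S$ and compute $\Fc^{ij}_{ab}(K)$ by applying the four factors of each side of (\ref{eq:TRE}) in turn to $v_i\ot v_j$, using (\ref{eq:R21action})--(\ref{eq:t2}). The claim reduces to two facts. First, every intermediate basis vector $v_c\ot v_e$ that contributes to $\Fc^{ij}_{ab}(K)$ with nonzero weight has $c,e\in S$. Second, the scalar weights, including the graded signs from (\ref{eq:gradedprod}), depend only on the parities and relative order of the indices involved, so they agree with the weights computed in $V_S$.

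For the first fact I would track each side factor by factor. On the left side, $K_2$ sends $v_i\ot v_j$ to $\sum_e k_{ej}\,(\pm)\,v_i\ot v_e$; here $e$ is a priori arbitrary. Next, $R^{t_1}_{21}$ fixes $v_i\ot v_e$ when $i\ne e$. When $i=e$, (\ref{eq:t1}) produces $v_c\ot v_c$ for $c>i$. Then $K_1$ changes the first leg, and $R_{21}$ either fixes $v_c\ot v_e$ or adds the flipped term $v_e\ot v_c$. Reading backwards from the target $v_a\ot v_b$, the second leg of the last vector before $R_{21}$ must lie in $\{a,b\}\subseteq S$. The second leg is untouched by $K_1$, so the index $e$ (or $c$ in the diagonal branch) produced by $R^{t_1}_{21}$ lies in $\{a,b\}$ as well. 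In the branch $e=i\in S$ this is automatic. In the branch through (\ref{eq:t1}) the summation index $c$ appears in both legs, and the second-leg constraint forces $c\in\{a,b\}$. The right side is handled symmetrically: $R_{21}$ acts first on $v_i\ot v_j$ and produces only indices in $\{i,j\}$. Then $K_1$ produces a first leg $c$ with coefficient $k_{ci}$ or $k_{cj}$. The factor $R^{t_2}_{12}$ either fixes the vector or, by (\ref{eq:t2}), moves diagonal vectors $v_c\ot v_c$ to $v_x\ot v_x$ with $x>c$. Finally $K_2$ changes only the second leg. The first leg of the output is $a$, so either $c=a$, or $c$ is the repeated index feeding (\ref{eq:t2}) with $x=a$. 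This second possibility is the delicate point: $c$ is then any index with $c<a$ such that both legs equal $c$ after $R_{21}K_1$. Since $R_{21}$ outputs second legs in $\{i,j\}$, we get $c\in\{i,j\}\subseteq S$. In every case all intermediate indices lie in $S$, so restricting to $V_S$ and projecting discards only terms with zero contribution. The same holds for the sums $\sum_{a'>i}$ in (\ref{eq:t1}) and (\ref{eq:t2}): the surviving summands have $a'\in S$, and the order $a'>i$ is the inherited order.

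For the second fact I would note that every coefficient in (\ref{eq:R21action})--(\ref{eq:t2}) is built from $q^{s_i}$, $\omega$, $s_a$, $(-1)^{p_ip_j}$ and order comparisons. The graded signs in (\ref{eq:gradedprod}) involve only the degrees of $e_{ce}$ and of the basis vectors, that is, parities of indices in $S$. The super-transposition sign (\ref{eq:st}) likewise uses only the parities of its two indices. Hence each surviving term has identical weight in $\End(V_S\ot V_S)$ with the induced grading. I expect the main obstacle to be the bookkeeping of the diagonal branches through (\ref{eq:t1}) and (\ref{eq:t2}), where a summation over all indices larger than $i$ occurs. The key observation there is that the summed index occupies both tensor legs, so it is pinned to the external indices by whichever leg is left untouched by the remaining $K$ factor.
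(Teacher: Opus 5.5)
Your proposal is correct and follows essentially the same route as the paper's proof: you track basis indices through the four factors on each side, using that $R_{21}$ only keeps or swaps the two indices present, and that a new index enters only through $R^{t_1}_{21}$ or $R^{t_2}_{12}$ acting on a diagonal tensor, where it occupies both legs and is pinned to $S$ by the leg the remaining $K$-factor leaves untouched. Your backward reading from the target $v_a\ot v_b$ is a more explicit version of the paper's forward observation that any index outside $S$ persists in one leg and is removed by the final projection. One notational slip: on the right-hand side the operator applied before $R^{t_2}_{12}$ is $K_1R_{21}$, not $R_{21}K_1$ as you wrote.
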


\begin{proof}
The operator $R_{21}$ only preserves or exchanges the two indices already present.  New
indices can enter through partially transposed R-matrices applied to a diagonal tensor.  If such an
index lies outside $S$, one copy survives all subsequent factors and is removed by the final
projection.  The same happens when one of the $K$-factors sends an index {from} $S$ outside $S$:
such outside indices appear only in non-diagonal terms.  The remaining factors cannot bring both tensor components back to $S$
{(because the other copy of $K$ acts on the different tensor factor)}.  After projection,
therefore, only matrix entries with both indices in $S$ remain, and the surviving transpose
sums are exactly those of the induced grading.  This is the residual for $K_S$.
\end{proof}

Let $S\subseteq I$ and let $A\in\End(V_S)$ solve the twisted
reflection equation for the induced  grading on $V_S$.  Extend $A$ by zero to an operator
$\widetilde A\in\End(V)$, i.e. $\widetilde A v_i=0$ for $i\notin S$ and
$\widetilde A|_{V_S}=A$.

\begin{lemma}\label{lem:zeroext}
The matrix $\widetilde A$ solves \eqref{eq:TRE} on $V$.
\end{lemma}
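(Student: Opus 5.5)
We have to show that $\Fc_V(\widetilde A)=0$, and the plan is to check this coefficient by coefficient. The coefficients to examine are $\Fc^{ij}_{ab}(\widetilde A)$ for all $a,b,i,j\in I$. Lemma~\ref{lem:locality} settles the case $a,b,i,j\in S$: there $(P_S\ot P_S)\Fc_V(\widetilde A)|_{V_S\ot V_S}=\Fc_{V_S}(\widetilde A_S)=\Fc_{V_S}(A)=0$. What remains is to show that every coefficient with at least one index outside $S$ vanishes. We use that $\widetilde A$ kills $v_k$ for $k\notin S$ and has image in $V_S$.

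\emph{Input indices outside $S$.} Suppose $i\notin S$ or $j\notin S$. On the left side $R_{21}K_1R_{21}^{t_1}K_2$, the first factor applied to $v_i\ot v_j$ is $K_2$, which kills the vector if $j\notin S$. If instead $j\in S$ and $i\notin S$, then $K_2$ produces $v_i\ot v_{j'}$ with $j'\in S$, so $j'\ne i$. By \eqref{eq:toff}, $R_{21}^{t_1}$ then fixes $v_i\ot v_{j'}$, and $K_1$ kills it. The right side $K_2R_{12}^{t_2}K_1R_{21}$ is handled the same way. By \eqref{eq:R21action}, $R_{21}$ maps $v_i\ot v_j$ to a combination of $v_i\ot v_j$ and $v_j\ot v_i$ (for $i\ne j$), and to a multiple of $v_i\ot v_i$ for $i=j$. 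Each resulting term has an index outside $S$. If that index is in the first leg, $K_1$ kills the term. Otherwise $K_1$ produces $v_{i'}\ot v_k$ with $i'\in S$ and $k\notin S$, which is off-diagonal, so $R_{12}^{t_2}$ fixes it, and $K_2$ kills it.

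\emph{Output indices outside $S$, inputs in $S$.} Suppose $i,j\in S$. On the left, after $K_2$ and $R_{21}^{t_1}$, the second leg already lies in $V_S$ and $K_1$ maps the first leg into $V_S$. The final $R_{21}$ only permutes indices, so the result lies in $V_S\ot V_S$. On the right, $R_{21}$ preserves $V_S\ot V_S$ and $K_1$ keeps the first leg in $V_S$. The only step that can create outside indices is $R_{12}^{t_2}$ acting on a diagonal tensor $v_r\ot v_r$, by \eqref{eq:t2}: it produces terms $v_a\ot v_a$ with $a>r$ possibly outside $S$. The last factor $K_2$ then maps the second leg back into $V_S$, but when $a\notin S$ it kills $v_a$ in the second leg, so these terms vanish. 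Hence every output lies in $V_S\ot V_S$, and the coefficients with $a\notin S$ or $b\notin S$ are zero.

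The main point to verify is that graded signs from \eqref{eq:gradedprod} never interfere. They only multiply terms by $\pm1$, so they cannot revive a killed term. We should also confirm that the induced grading and transpose sums on $V_S$ agree with those of $V$ restricted to $S$, but that is already covered by Lemma~\ref{lem:locality}.
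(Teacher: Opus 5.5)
Your proof follows the paper's argument, written out in more detail. You trace the factors on each side using that $\widetilde A$ annihilates $v_k$ for $k\notin S$ and maps into $V_S$. You then identify the coefficients with all four indices in $S$ with those of $\Fc_{V_S}(A)=0$ via Lemma~\ref{lem:locality}; this is what the paper means by ``the surviving part of the calculation is precisely the twisted equation for $A$.''

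One step needs correcting. On the left-hand side, the second leg need not lie in $V_S$ after $R_{21}^{t_1}$. By \eqref{eq:t1}, a diagonal tensor $v_i\ot v_i$ also produces terms $v_a\ot v_a$ with $a>i$, possibly with $a\notin S$. These terms are killed by $K_1$, because their first leg is $v_a$ too. This is the same observation you already made for $R_{12}^{t_2}$ on the right-hand side, so your conclusion is unaffected.
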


\begin{proof}
If one input index lies outside $S$, one of the two factors
$\widetilde A_1,\widetilde A_2$ annihilates the corresponding term.  A {diagonal tensor} created outside $S$ by a {partially transposed R-matrix}
is annihilated for the same reason.  When both input indices are in $S$, the surviving part of the calculation
is precisely the twisted equation for $A$.  Hence the zero extension is again a solution.
\end{proof}

\begin{corollary}\label{cor:fourlocal}
A matrix $K$ solves \eqref{eq:TRE} if and only if every principal submatrix
$K_S$ with $1\le |S|\le4$ solves the twisted reflection equation for the induced grading on $V_S$.
\end{corollary}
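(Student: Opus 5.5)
The forward direction is immediate from Lemma~\ref{lem:locality}. If $\Fc_V(K)=0$, then projecting to $V_S\ot V_S$ gives $\Fc_{V_S}(K_S)=0$ for every $S\subseteq I$, and in particular for $|S|\le 4$. The substance lies in the converse. The plan is to show that each coefficient $\Fc^{ij}_{ab}(K)$ is a quadratic expression in entries $k_{rs}$ whose indices $r,s$ all lie in the set $S=\{i,j,a,b\}$, which has at most four elements. Granting this, Lemma~\ref{lem:locality} applied to this $S$ identifies $\Fc^{ij}_{ab}(K)$ with the coefficient $\Fc_{V_S}(K_S)^{ij}_{ab}$, which vanishes by hypothesis. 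Since every coefficient of $\Fc_V(K)$ has this form, we get $\Fc_V(K)=0$.

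The four-index claim will be proved by tracking indices through the product $R_{21}K_1R_{21}^{t_1}K_2$ applied to $v_i\ot v_j$; the other side is handled symmetrically. First, $K_2$ replaces $j$ by some $c$ and produces an entry $k_{cj}$, with the first leg still carrying $i$. Next, the formulas \eqref{eq:toff}--\eqref{eq:t1} show that $R_{21}^{t_1}$ is the identity on $v_i\ot v_c$ when $c\ne i$. When $c=i$ it produces $v_e\ot v_e$ with $e\ge i$. Then $K_1$ turns the first leg into some $f$ and produces $k_{fi}$ or $k_{fe}$. Finally $R_{21}$ only preserves or swaps the two indices present, so the output $v_a\ot v_b$ satisfies $\{a,b\}=\{f,\text{second index}\}$.

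Case analysis gives the following.
\begin{enumerate}[label=(\roman*)]
\item In the non-diagonal branch the second index is $c$ and the coefficient is $k_{fi}k_{cj}$. Here $\{f,c\}=\{a,b\}$, so every index involved lies in $\{i,j,a,b\}$.
\item In the diagonal branch the coefficient is $k_{fe}k_{ij}$ with $c=i$, and the output indices are $\{f,e\}=\{a,b\}$. So the summation index $e$ is forced to equal $a$ or $b$, and no free summation over an outside index survives for a fixed coefficient.
\end{enumerate}
The same bookkeeping applies to $K_2R_{12}^{t_2}K_1R_{21}$. Here $R_{21}$ acts first, only permuting $i,j$, and then the same pattern repeats with the legs exchanged. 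The grading signs in \eqref{eq:gradedprod} depend only on the parities of the indices involved, which are the induced parities on $V_S$. This matches the statement of Lemma~\ref{lem:locality}, so the identification with $\Fc_{V_S}(K_S)$ includes the signs.

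The main obstacle, and the step to write carefully, is the diagonal branch. There the transposed R-matrices generate a sum over $e$ ranging over all of $I$. One must check that, for fixed output $(a,b)$, exactly one value of $e$ contributes and that $e\in\{a,b\}$. The risk is a term in which $e$ is eaten by a $K$ and does not reappear. This cannot happen: in each product only one $K$ acts on each leg after the transposed R-matrix, so the copy of $e$ on the other leg survives to the output. This is precisely the observation already used in the proof of Lemma~\ref{lem:locality}, and I would make it explicit here as a coefficient-level statement.
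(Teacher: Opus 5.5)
Your argument is correct and is the paper's own: the coefficient $\Fc^{ij}_{ab}(K)$ is identified, via Lemma~\ref{lem:locality} with $S=\{i,j,a,b\}$, with the corresponding coefficient of $\Fc_{V_S}(K_S)$, which vanishes by hypothesis. Your index-tracking step is not needed, since Lemma~\ref{lem:locality} already gives this identification for every $S$ containing the four indices, so that step only re-proves the lemma at coefficient level. Also, the claim that ``exactly one value of $e$ contributes'' is slightly off: for $a<b$, both $e=a$ and $e=b$ can contribute through the swap term of $R_{21}$, but only $e\in\{a,b\}$ matters.
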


\begin{proof}
Necessity follows from Lemma~\ref{lem:locality}.  For the converse, fix a coefficient of $\Fc(K)$ with input indices $c,d$ and output indices $a,b$.  It is already a coefficient of the residual on the principal set $S=\{a,b,c,d\}$, whose cardinality is at most four.  That residual vanishes by assumption.
\end{proof}

Lemma~\ref{lem:locality} is the principal restriction property, while Corollary~\ref{cor:fourlocal} is the four-index locality principle used below.

Let us give an algebraic interpretation of
Lemmas~\ref{lem:locality} and~\ref{lem:zeroext}.  For each $S\subseteq I$ denote by $\Ac_S$ the
quotient of the free associative algebra generated by the entries of
$K_S=(K_{ij})_{i,j\in S}$ by the ideal generated by the coefficients of
$\Fc_S(K_S)$.  For $S=I$ it is the reflection equation algebra associated
with \eqref{eq:TRE}.

\begin{proposition}
For each $S\subseteq I$, the algebra $\Ac_S$ is embedded in $\Ac_I$ as a
retract.
\end{proposition}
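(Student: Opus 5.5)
The plan is to construct two algebra homomorphisms $\iota:\Ac_S\to\Ac_I$ and $\pi:\Ac_I\to\Ac_S$ with $\pi\circ\iota=\mathrm{id}_{\Ac_S}$. Then $\iota$ is injective, and $\Ac_S$ is a retract of $\Ac_I$. Both maps will first be defined on the free algebras generated by the matrix entries. The task is then to show that each sends the defining ideal into the other defining ideal; this uses Lemmas~\ref{lem:locality} and~\ref{lem:zeroext} read with noncommuting entries.

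For $\iota$, send the generator $K_{ij}$ with $i,j\in S$ to the generator $K_{ij}$ of $\Ac_I$. We need every coefficient of $\Fc_S(K_S)$ to vanish in $\Ac_I$. Lemma~\ref{lem:locality} says that $\Fc_S(K_S)$ is the compression $(P_S\ot P_S)\Fc_I(K)|_{V_S\ot V_S}$. Hence each coefficient $\Fc_{S,ab}^{ij}(K_S)$, with $a,b,i,j\in S$, equals the coefficient $\Fc_{ab}^{ij}(K)$, which lies in the defining ideal of $\Ac_I$.

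For $\pi$, send $K_{ij}\mapsto K_{ij}$ if $i,j\in S$, and $K_{ij}\mapsto0$ otherwise. Under this substitution, $K$ becomes the zero extension $\widetilde{K_S}$ of the generic matrix $K_S$ over $\Ac_S$. The argument of Lemma~\ref{lem:zeroext} then shows that every coefficient of $\Fc_I(\widetilde{K_S})$ is either identically zero or equal to a coefficient of $\Fc_S(K_S)$. Consequently $\pi$ kills the ideal of $\Ac_I$. The composite $\pi\circ\iota$ is the identity on generators, so it is the identity on $\Ac_S$.

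The main obstacle is that both lemmas were stated for numerical matrices, whereas here the entries are noncommuting generators. I would check that the proofs are purely combinatorial on indices. Each coefficient $\Fc_{ab}^{ij}$ is a sum of ordered products $k_{\cdot\cdot}k_{\cdot\cdot}$, one from $K_1$ and one from $K_2$, with structure constants from \eqref{eq:R21action}--\eqref{eq:t2}. The order of the two factors is fixed by the position in \eqref{eq:TRE}, and commutativity of entries is never used. A second point to verify is that the graded signs from \eqref{eq:gradedprod} are treated consistently. We can take the generators $K_{ij}$ to carry degree $p_i+p_j$, so that $K$ is even, or treat them as even symbols with the signs absorbed in the structure constants. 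In either convention the signs depend only on the index parities, and these are the same in the induced grading. I would state this reading explicitly, then note that the index-tracking arguments of Lemmas~\ref{lem:locality} and~\ref{lem:zeroext} go through verbatim.
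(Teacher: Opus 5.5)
Your proposal is correct and follows the paper's proof: your $\iota$ is the splitting the paper obtains from Lemma~\ref{lem:locality}, and your $\pi$ is the paper's quotient map $\Ac_I\to\Ac_I/\Jc_S\cong\Ac_S$ obtained from Lemma~\ref{lem:zeroext}, so $\pi\circ\iota=\mathrm{id}$ is the same splitting the paper records as a split exact sequence. Your check that both lemmas are pure index-tracking, and so hold for noncommuting generators with parity-dependent signs, spells out the step the paper compresses into the single sentence saying that the lemmas generalize to the generic matrix $K_I$.
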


\begin{proof}
First of all observe that Lemmas~\ref{lem:locality} and~\ref{lem:zeroext}
generalize to the matrix $K=K_I$ whose entries generate $\Ac_I$.
Denote by $\Jc_S\subset \Ac_I$ the two-sided ideal generated by those
$K_{ij}$ for which either $i\notin S$ or $j\notin S$.  Then
Lemma~\ref{lem:zeroext} implies a short exact sequence
\[
0\longrightarrow \Jc_S\longrightarrow \Ac_I\longrightarrow \Ac_S
\longrightarrow 0,
\]
which splits by Lemma~\ref{lem:locality} via the natural embedding
$\Ac_S\to \Ac_I$ induced by $S\hookrightarrow I$.
\end{proof}

Numerical K-matrices are one-dimensional representations of the
reflection equation (non-graded) algebras.
In particular, Lemmas~\ref{lem:locality} and~\ref{lem:zeroext} follow
from the embedding and quotient maps between $\Ac_S$ and $\Ac_I$.

\section{ Reflection equation in dimension  two}\label{sec:rank2}
The reflection equation in dimension one is satisfied with an
arbitrary scalar, so the first nontrivial dimension is two.
{For} $i<j$, write the principal block {on $\{i,j\}$} as
$$
 \begin{pmatrix}a&b\\c&d\end{pmatrix}.
$$

\begin{lemma}\label{lem:twoall}
\begin{enumerate}[label=\textup{(\roman*)}]
\item If $p_i=p_j$, the complete two-index equation is
equivalent to
\begin{equation}\label{eq:same2}
 ab=0,\qquad bd=0,\qquad b(c+q^{-s _i }b)=0.
\end{equation}
\item If $p_i\ne p_j$, the complete two-index equation is equivalent to
\begin{equation}\label{eq:mixed2local}
 b=0,\qquad c^2+\omega ad=0.
\end{equation}
\end{enumerate}
\noindent Consequently every nonzero odd entry lies strictly below the
diagonal and forces both endpoint diagonal entries to be nonzero.
\end{lemma}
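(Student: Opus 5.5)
By Lemma~\ref{lem:locality}, the complete two-index equation is the residual $\Fc_{V_S}(K_S)$ for $S=\{i,j\}$ with the induced grading. So I would set $d=2$, $i=1$, $j=2$, and write
$$
K=a\,e_{11}+b\,e_{12}+c\,e_{21}+d\,e_{22}.
$$
The aim is to compute all sixteen coefficients $\Fc^{kl}_{mn}(K)$ with $k,l,m,n\in\{1,2\}$ directly from \eqref{eq:R21action}, \eqref{eq:toff}, \eqref{eq:t1} and \eqref{eq:t2}.

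\textbf{Computing the coefficients.} For each input $v_k\ot v_l$ I would follow the left-hand side $R_{21}K_1R_{21}^{t_1}K_2$ and the right-hand side $K_2R_{12}^{t_2}K_1R_{21}$ factor by factor.

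\begin{itemize}
\item The only places where an extra term appears are $R_{21}$ on $v_2\ot v_1$ and the partial transposes on the diagonal tensor $v_1\ot v_1$. The latter produces the term $\omega s_1 v_2\ot v_2$, respectively $\omega s_2 v_2\ot v_2$.
\item Signs come from two sources: the Koszul rule \eqref{eq:gradedprod} when $K_2=1\ot K$ passes the first tensor factor, and $(-1)^{p_ip_j}$ in the flip part of $R_{21}$.
\item When $p_1\ne p_2$, the entries $b,c$ multiply odd matrix units. Hence $K_2(v_k\ot v_l)$ picks up the sign $(-1)^{p_k}$ exactly on the $b,c$ components.
\end{itemize}

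It helps to sort the coefficients by the weight $\{k,l\}$ versus $\{m,n\}$ as multisets. A product of two entries of $K$ shifts the weight in a controlled way, and this tells in advance which monomials ($ab$, $bd$, $b^2$, $bc$, $c^2$, $ad$, $ac$, $cd$, \dots) can occur in each coefficient.

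\textbf{Case (i), $p_1=p_2$.} No odd signs occur, and I expect the nonvanishing coefficients to be scalar multiples of $ab$, $bd$ and $bc+q^{-s}b^2$, with $s=s_1=s_2$. The multipliers should be products of powers of $q$, $q\pm1$ and $q^2+1$, so by Remark~\ref{rem:nohiddenq} they can be cancelled using $q^4\ne1$. I would check that every other coefficient is a combination of these three expressions; for instance, the $c^2$ and $ad$ terms in the $(1,1)\to(2,2)$ coefficient should cancel identically.

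\textbf{Case (ii), $p_1\ne p_2$.} The Koszul signs flip the relative sign of certain pairs of terms. I expect two effects.
\begin{itemize}
\item The $(1,1)\to(2,2)$ coefficient stops cancelling and becomes a nonzero multiple of $c^2+\omega ad$.
\item The $b$-equations change. Instead of $b(c+q^{-s}b)$ one should get separate conditions, such as $bc$ and $b^2$ with different coefficients, or $ab$ and $bd$ combined with $b$ times a diagonal quantity. Together with the weight argument these should force $b=0$.
\end{itemize}
Once $b=0$, I would verify that all remaining coefficients reduce to multiples of $c^2+\omega ad$.

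\textbf{The consequence.} A nonzero odd entry requires $p_i\ne p_j$. Case (ii) then gives $b=0$, so the entry is $c$, which lies below the diagonal. From $c^2=-\omega ad\ne0$ we get $a\ne0$ and $d\ne0$.

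\textbf{Main obstacle.} The hard step is the sign bookkeeping in case (ii). Getting the sign in front of $\omega ad$ right, and showing that $b$ is genuinely killed rather than merely constrained, depends on carefully tracking the Koszul sign each time $K_1$ or $K_2$ passes an odd component. I would first check this by redoing the computation for the single input $v_1\ot v_1$ in both gradings side by side.
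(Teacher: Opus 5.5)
Your plan is correct and is essentially the paper's proof: you reduce to $S=\{i,j\}$ by locality and expand both sides with \eqref{eq:R21action}--\eqref{eq:t2}. Your predictions are what the computation actually gives: the $c^2$ and $ad$ terms cancel when $p_i=p_j$, and when $p_i\ne p_j$ the coefficient of $v_2\ot v_2$ in $\Fc(K)(v_1\ot v_1)$ is $\pm(q+q^{-1})(c^2+\omega ad)$. The step you hedge on is settled by one coefficient, exactly as in the paper: for $p_i\ne p_j$ the coefficient of $v_1\ot v_1$ in $\Fc(K)(v_2\ot v_2)$ is $\pm(q+q^{-1})b^2$, so $q^2\ne-1$ forces $b=0$ directly.
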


\begin{proof}
By the locality principle, the calculation may be done in dimension two.  Using \textup{(\ref{eq:R21action}--\ref{eq:t2})}, the case $p_i=p_j$ gives
\eqref{eq:same2}.  When $p_i\ne p_j$, there are
$$
 \pm(q+q^{-1})b^2,
 \qquad
 \pm(q+q^{-1})(c^2+\omega ad)
$$
among the residual coefficients.
Since $q^2\ne-1$, the first coefficient forces $b=0$, and the second gives
$c^2+\omega ad=0$.  Conversely, after imposing these two relations, direct
substitution makes every residual coefficient vanish.
\end{proof}

\begin{lemma}\label{lem:two}
If $K$ is {an even solution of \eqref{eq:TRE}}, then for $p_i\ne p_j$ one has
\begin{equation}\label{eq:oppdiag}
 k_{ii}k_{jj}=0,
\end{equation}
and if $i<j$, $p_i=p_j$ and $k_{ij}\ne0$, then
\begin{equation}\label{eq:qpair}
 k_{ii}=k_{jj}=0,\qquad k_{ji}=-q^{-s_i}k_{ij}.
\end{equation}
\end{lemma}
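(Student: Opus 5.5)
Both assertions follow by reducing to the two-index principal block on $\{i,j\}$ and reading off Lemma~\ref{lem:twoall}. By Lemma~\ref{lem:locality}, if $K$ solves \eqref{eq:TRE}, then for any $i<j$ the block $\begin{pmatrix}a&b\\c&d\end{pmatrix}=\begin{pmatrix}k_{ii}&k_{ij}\\k_{ji}&k_{jj}\end{pmatrix}$ solves the twisted reflection equation for the induced grading on $V_{\{i,j\}}$. So it suffices to combine the relations \eqref{eq:same2} and \eqref{eq:mixed2local} with the evenness hypothesis.

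For \eqref{eq:oppdiag}, take $p_i\ne p_j$. By symmetry of the statement we may order the indices so that $i<j$. Then Lemma~\ref{lem:twoall}(ii) gives $b=0$ and $c^2+\omega ad=0$. Since $K$ is even and $p_i\ne p_j$, we have $c=k_{ji}=0$. The second relation then reduces to $\omega k_{ii}k_{jj}=0$. Because $\omega=q-q^{-1}\ne0$ (this uses $q^2\ne1$, which holds since $q^4\ne1$), we get $k_{ii}k_{jj}=0$.

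For \eqref{eq:qpair}, take $i<j$ with $p_i=p_j$ and $b=k_{ij}\ne0$. Lemma~\ref{lem:twoall}(i) gives $ab=0$ and $bd=0$, so $a=d=0$, i.e. $k_{ii}=k_{jj}=0$. The third relation $b(c+q^{-s_i}b)=0$ gives $c=-q^{-s_i}b$, which is $k_{ji}=-q^{-s_i}k_{ij}$.

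There is no real obstacle beyond Lemma~\ref{lem:twoall}, which carries the computational content. The only care needed is to keep the sign convention $s_i=\sigma_{p_i}$ in the exponent, and to note that evenness enters only in the mixed-parity case, where it kills the odd entry $c$ that could otherwise balance $\omega ad$.
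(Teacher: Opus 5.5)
Your proposal is correct and follows the paper's own proof: restrict to the two-index block, use evenness to kill $c$ in \eqref{eq:mixed2local} so that $\omega\ne0$ gives $k_{ii}k_{jj}=0$, and read \eqref{eq:qpair} directly off \eqref{eq:same2}. Your remark that evenness is used only in the mixed-parity case is also accurate.
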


\begin{proof}
For an even matrix a mixed two-index block is diagonal, and \eqref{eq:mixed2local} gives \eqref{eq:oppdiag}.  The statement for $p_i=p_j$ follows directly from \eqref{eq:same2}.
\end{proof}

When $i<j$, $p_i=p_j$, and $k_{ij}\ne0$, relation \eqref{eq:qpair} fixes the accompanying lower entry $k_{ji}$.  We call these two off-diagonal entries a \emph{skew pair}, or simply the pair, on the endpoints $i,j$.

A nonzero lower entry $k_{ji}$, $i<j$, for which $k_{ij}=0$ will be called
\emph{lower-only}.  In the local statements below, prescribed entries are called
compatible when they can occur together in a solution of the induced principal-block
equation; otherwise they are incompatible.

\section{ Reflection equation in dimension  three}\label{sec:rank3}
{With every K-matrix we associate a partially oriented graph on the vertex set $I$.
A nonzero off-diagonal entry $k_{ij}$ gives an arrow $i\leftarrow j$, and a nonzero
diagonal entry $k_{ii}$ gives an unoriented loop at $i$.
For a lower-triangular matrix, every arrow goes from a smaller index to a larger one.
When only the support matters, we omit the arrowheads, write $i-\!\!\!-j$ for the edge
corresponding to $(j,i)\in\S_-$ (we use $i-\!\!\!-j$ only with $i<j$, i.e. for lower-triangular matrices), and represent a loop at $i$ by a mark.}
{We call a vertex $i$ \emph{marked} if $k_{ii}\ne0$.}
Call a vertex
\emph{active} if it is marked or incident with an edge.  A set $S$ of
vertices is called a \emph{marked clique} precisely when
$$
 k_{ii}\ne0\quad(i\in S),\qquad
 k_{ji}\ne0\quad(i<j,\ i,j\in S).
$$

\begin{lemma}\label{lem:allclosure}
Let $K$ be a lower-triangular solution, not necessarily even.
On every ordered triple $i<j<k$, the residual coefficient equations contain relations
of the form
\begin{equation}\label{eq:cl}
 \al_1\,k_{ii}k_{kj}=\bt_1\,k_{ji}k_{ki},\qquad
 \al_2\,k_{ji}k_{kj}=\bt_2\,k_{jj}k_{ki},\qquad
 \al_3\,k_{ji}k_{kk}=\bt_3\,k_{ki}k_{kj}.
\end{equation}
where all $\al_r,\bt_r$ are nonzero scalars.  Hence a nonzero diagonal at one vertex together with
a nonzero edge joining the other two vertices forces $\{i,j,k\}$ to be a
marked clique.
\end{lemma}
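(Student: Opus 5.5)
By Lemma~\ref{lem:locality}, we may restrict to the principal block on $S=\{i,j,k\}$ and work in dimension three with the induced parities $p_i,p_j,p_k$. Since $K$ is lower triangular, the only unknowns are the three diagonal entries $k_{ii},k_{jj},k_{kk}$ and the three lower entries $k_{ji},k_{ki},k_{kj}$. The plan is to pick, for each of the three relations in \eqref{eq:cl}, one specific residual coefficient $\Fc^{cd}_{ab}$ on $S$ and compute it from \eqref{eq:R21action}--\eqref{eq:t2}.

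The natural candidates are coefficients whose input and output together use the three indices with total "weight" matching the relation. For the first relation $k_{ii}k_{kj}$ versus $k_{ji}k_{ki}$, I would take input $v_i\ot v_i$ and output $v_j\ot v_k$ (or $v_k\ot v_j$).

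\begin{itemize}
\item On the left side, $R^{t_1}_{21}$ produces no new indices on an $i\ne j$ tensor, so one tracks the chain $K_1$, then $R^{t_1}_{21}$, then $K_2$. Here $K_1v_i$ gives $v_j$ or $v_k$, and $K_2v_i$ gives the other.
\item On the right side, $R^{t_2}_{12}$ acting on a diagonal tensor can create $v_a\ot v_a$ with $a>i$. This is where $k_{ii}k_{kj}$ enters: $K_2$ keeps $v_i$ with $k_{ii}$, then the diagonal sum produces $v_j\ot v_j$, then $K_1$ sends $j\to k$.
\end{itemize}

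Analogously, the second relation should come from input $v_i\ot v_j$ or $v_j\ot v_i$ with output involving $k$ twice or $j,k$. The third should come from an input containing $v_i$, with $k_{kk}$ acting and the diagonal creation happening at $k$. Because the paper's conventions involve graded signs, I would do each computation separately for the parity patterns, or uniformly by keeping the sign factors $(-1)^{p_ap_b}$ and $s_a$ symbolic.

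The key check is that the coefficients are nonzero. Each $\al_r,\bt_r$ should come out as a signed product of $\omega$, powers of $q$, and possibly $q^{\pm1}+q^{\mp1}$ or $\gm_\eps$. These are all nonzero under $q^4\ne1$ (Remark~\ref{rem:nohiddenq}). There is a possibility that two contributions with the same monomial cancel in some parity pattern, in which case a different output ordering ($v_a\ot v_b$ versus $v_b\ot v_a$) has to be chosen. I expect this bookkeeping of signs and collisions, across all eight parity patterns, to be the main obstacle. As a shortcut, one can combine the two coefficients for the two output orders and use whichever is non-degenerate.

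The consequence then follows directly from \eqref{eq:cl}:
\begin{itemize}
\item If $k_{ii}\ne0$ and $k_{kj}\ne0$, the first relation gives $k_{ji},k_{ki}\ne0$. The second relation then gives $k_{jj}\ne0$, and the third gives $k_{kk}\ne0$.
\item If $k_{jj}\ne0$ and $k_{ki}\ne0$, the second relation gives $k_{ji},k_{kj}\ne0$, and the others follow.
\item If $k_{kk}\ne0$ and $k_{ji}\ne0$, the third relation gives $k_{ki},k_{kj}\ne0$, and the others follow.
\end{itemize}
In each case all three diagonal entries and all three lower entries are nonzero, so $\{i,j,k\}$ is a marked clique.
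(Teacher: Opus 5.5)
Your route is the one the paper takes: restrict to $\{i,j,k\}$ and expand the residual separately for each of the eight parity words. Your derivation of the marked-clique consequence from the three relations is correct, and more explicit than the paper's. The gap is the step that carries the whole content of the lemma: that for every parity word the relevant coefficients contain only the two displayed monomials, with both net coefficients nonzero. You only assert this as an expectation (``should come out as''). Your fallback of switching the output order comes with no reason why at least one ordering would be nondegenerate. The paper's proof is exactly this computation, recorded as the table of $(\al_r,\bt_r)$ for the words $000,\dots,111$, all nonzero because $q^2\ne1$.

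Moreover, the bookkeeping you sketch would not produce the right coefficients.
\begin{itemize}
\item The operators act right to left. On the left side the order is $K_2$, $R_{21}^{t_1}$, $K_1$, $R_{21}$; on the right side it is $R_{21}$, $K_1$, $R_{12}^{t_2}$, $K_2$. Both of your chains are reversed; for instance, on the right it is $K_1$ that retains $k_{ii}v_i\ot v_i$ and $K_2$ that sends the created $v_j\ot v_j$ to $v_j\ot v_k$.
\item Diagonal creation also happens on the left side, which you overlook. $K_2$ can return $k_{ii}v_i\ot v_i$; then $R_{21}^{t_1}$ creates $\omega s_i v_j\ot v_j$, $K_1$ maps it to $v_k\ot v_j$, and the exchange term of $R_{21}$ carries it to $v_j\ot v_k$. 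The exchange term likewise adds an $\omega k_{ki}k_{ji}$ contribution.
\item In the all-even case this gives $\Fc^{ii}_{jk}=(1+\omega-q)k_{ji}k_{ki}+(\omega^2-q\omega)k_{ii}k_{kj}$, i.e.\ $q\,\Fc^{ii}_{jk}=(q-1)k_{ji}k_{ki}-\omega k_{ii}k_{kj}$, which is the table entry $(1+q,q)$. The net coefficients arise from cancellation between the two sides. So nonvanishing cannot be read off from a product of $\omega$, powers of $q$, $q+q^{-1}$ and $\gm_\eps$; factors such as $q-1$ and $1-q$ do appear in the table.
\item The second relation comes from output $\{j,k\}$, not $v_k\ot v_k$; the coefficient $\Fc^{ij}_{kk}$ gives the third relation.
\end{itemize}
To complete the proof, evaluate $\Fc^{ii}_{jk}$, $\Fc^{ij}_{jk}$ and $\Fc^{ij}_{kk}$ with the graded signs for each parity word, and check that both coefficients survive.
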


\begin{proof}
Expanding the reflection equation for a lower-triangular matrix
on three indices gives the displayed
relations with the following coefficients:
$$
\begin{array}{c|ccc}
(p_ip_jp_k)&(\al_1,\bt_1)&(\al_2,\bt_2)&(\al_3,\bt_3)\\ \hline
000&(1+q,q)&(q,1+q)&(1+q,q)\\
001&(1+q,q)&(q,1+q)&(q-1,-1)\\
010&(1+q,q)&(1,1-q)&(1+q,q)\\
011&(q-1,-q)&(1,1+q)&(1+q,1)\\
100&(q-1,-1)&(q,1+q)&(1+q,q)\\
101&(1+q,-1)&(q,q-1)&(1+q,-1)\\
110&(1+q,1)&(1,1+q)&(q-1,-q)\\
111&(1+q,1)&(1,1+q)&(1+q,1)
\end{array}
$$
The assumption $q^2\ne1$ makes every entry in the table nonzero.  The stated
conclusion now follows directly from
\eqref{eq:cl}.
\end{proof}

{For} $i<j<k$ {with} $p_i=p_j=p_k=\eps$, put $d_r=k_{rr}$ and, for $b>a$,
$$
 h_{ba}=\gm_\eps^{-1}k_{ba}.
$$

\begin{proposition}\label{prop:rank3hom}
Let $i<j<k$ with $p_i=p_j=p_k=\eps$.
For a lower-triangular principal block on $\{i,j,k\}$, the complete three-index equation is
\begin{equation}\label{eq:Ntriple}
 d_i h_{kj}=h_{ji}h_{ki},\qquad
 h_{ji}h_{kj}=d_jh_{ki},\qquad
 h_{ji}d_k=h_{ki}h_{kj}.
\end{equation}
After this normalization the triple relations are independent of $q$.
\end{proposition}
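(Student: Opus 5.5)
By Lemma~\ref{lem:locality} and Corollary~\ref{cor:fourlocal}, I will compute in dimension three with $V=V_{\{i,j,k\}}$. Relabel the indices $1<2<3$, all of parity $\eps$, and take $K$ lower triangular with entries $d_1,d_2,d_3$ and $k_{21},k_{31},k_{32}$. Since all parities coincide, every sign $(-1)^{p_ap_b}$ equals $(-1)^{\eps}$ and every $s_a$ equals $\sigma_\eps$. The action formulas \eqref{eq:R21action}--\eqref{eq:t2} then become uniform, with $q^{s_a}=q^{\sigma_\eps}$. I will not treat $\eps=1$ as a separate calculation. Instead I will carry the sign $\sigma_\eps$ through the calculation, or note that $\eps=1$ is obtained from $\eps=0$ by the substitution $q\mapsto -q^{-1}$ together with an overall sign. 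This is consistent with the rows $000$ and $111$ of the table in Lemma~\ref{lem:allclosure}.

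\emph{Step 1: the two-index content.} For each pair $a<b$ the block is lower triangular, so $b=0$ in the notation of Lemma~\ref{lem:twoall}(i). Relations \eqref{eq:same2} are then vacuous. Hence every residual coefficient supported on two indices vanishes identically, and only the genuinely three-index coefficients $\Fc^{cd}_{ab}$ with $\{a,b,c,d\}=\{1,2,3\}$ need to be examined.

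\emph{Step 2: enumerate the three-index coefficients.} For a lower-triangular $K$, an output index can never lie above the input index in the same leg, except through the diagonal-tensor sums in \eqref{eq:t1} and \eqref{eq:t2}. Using this, I will list the nonzero quadratic coefficients. The expected ones come from inputs $v_1\ot v_1$, $v_1\ot v_2$, $v_2\ot v_1$, $v_2\ot v_2$ with outputs in which the index $3$ appears. Each coefficient is bilinear, of the shape $x\,k_{\ast\ast}k_{\ast\ast}+y\,k_{\ast\ast}k_{\ast\ast}$. By Lemma~\ref{lem:allclosure}, the three displayed relations \eqref{eq:cl} occur among them, with $(\al,\bt)=(1+q,q),(q,1+q),(1+q,q)$ for $\eps=0$.

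\emph{Step 3: normalization.} Substitute $k_{ba}=\gm_\eps h_{ba}$. For $\eps=0$ we have $\gm_0=1+q^{-1}=(1+q)/q$, so the first relation $(1+q)d_1k_{32}=q\,k_{21}k_{31}$ becomes $(1+q)\gm_0 d_1h_{32}=q\gm_0^2h_{21}h_{31}$, which reduces to $d_1h_{32}=h_{21}h_{31}$. The other two relations reduce in the same way. For $\eps=1$ we have $\gm_1=1+q$, and the coefficients $(1+q,1)$ and $(1,1+q)$ in the table give the same reduced relations. The scalars divided out are of the type allowed in Remark~\ref{rem:nohiddenq}. This establishes \eqref{eq:Ntriple} and its $q$-independence.

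\emph{Step 4: completeness, the main obstacle.} It remains to show that every other three-index coefficient is a linear combination of the three relations in \eqref{eq:Ntriple}, possibly multiplied by entries of $K$. Some coefficients will be multiples of a single relation. Others, notably those coming from the sums $\omega\sum_{a>i}$ in \eqref{eq:t1} and \eqref{eq:t2}, will mix several terms. I expect these to collapse after using $q^{\sigma}-q^{-\sigma}=\sigma\omega$ together with $\gm_\eps$. If some coefficient does not reduce directly, my fallback is a direct check: parametrize the solution set of \eqref{eq:Ntriple}, using the generic case $h_{21}h_{31}h_{32}\neq0$ and then the degenerate cases, substitute into the full residual, and verify that it vanishes. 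This bookkeeping is where errors are most likely, so I would organize it in a table indexed by the input pair and output pair.
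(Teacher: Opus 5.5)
Your route is the paper's own proof: restrict to the three indices, take the three relations of Lemma~\ref{lem:allclosure} (rows $000$ and $111$), rescale with $\gm_\eps=(q+1)/\rho_\eps$, where $\rho_0=q$ and $\rho_1=1$, and check that nothing else survives. The check you defer does go through: after relabelling $1<2<3$, the only nonzero coefficients are $\Fc^{11}_{23},\Fc^{11}_{32}$ (first relation), $\Fc^{21}_{32},\Fc^{12}_{23}$ (second relation) and $\Fc^{21}_{33},\Fc^{12}_{33}$ (third relation). Each is a scalar multiple of a single relation, so no mixing occurs, and since the residual is homogeneous quadratic, no multiplication by entries of $K$ can arise.

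One aside is wrong, though you never use it. On an all-odd block every $e_{ij}$ is even and the super-transposition is the ordinary one, so the $\eps=1$ equation is literally the $\eps=0$ equation at $q^{-1}$, not at $-q^{-1}$. The substitution $q\mapsto -q^{-1}$ would turn $(1+q,q)$ into a multiple of $(q-1,-1)$ rather than of $(1+q,1)$.
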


\begin{proof}
Before normalization, the three relevant equations are
$$
 (q+1)d_i k_{kj}=\rho_\eps k_{ji}k_{ki},\quad
 \rho_\eps k_{ji}k_{kj}=(q+1)d_jk_{ki},\quad
 (q+1)k_{ji}d_k=\rho_\eps k_{ki}k_{kj},
$$
where $\rho_0=q$ and $\rho_1=1$.  Since $\gm_\eps=(q+1)/\rho_\eps$, these are exactly the displayed relations.
Every remaining residual coefficient is a scalar multiple of one of these
three relations, so the system is complete.
\end{proof}

\begin{proposition}\label{prop:rank3upper}
Let $K$ be a solution and suppose $k_{ab}\ne0$ with $a<b$.  Then
$p_a=p_b=\eps$,
$$
 k_{ba}=-q^{-{\sigma_\eps}}k_{ab},\qquad k_{aa}=k_{bb}=0.
$$
If $c$ is any third index, then all entries joining $c$ to $a$ or $b$ vanish.  A diagonal entry $k_{cc}\ne0$ is compatible with the pair if and only if $p_c\ne\eps$ and $c\notin {I_{ab}}$.
\end{proposition}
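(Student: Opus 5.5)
The first assertions follow from the two-index results. By Lemma~\ref{lem:locality} the block $K_{\{a,b\}}$ solves the two-index equation. If $p_a\ne p_b$, \eqref{eq:mixed2local} forces $k_{ab}=0$, which is a contradiction. Hence $p_a=p_b=\eps$, and \eqref{eq:same2} with $b=k_{ab}\ne0$ gives $k_{aa}=k_{bb}=0$ and $k_{ba}=-q^{-\sigma_\eps}k_{ab}$; this is \eqref{eq:qpair} without any evenness assumption. For the rest we fix a third index $c$ and work on the principal block $S=\{a,b,c\}$. This suffices by Lemma~\ref{lem:locality} for necessity, and by Corollary~\ref{cor:fourlocal} together with Lemma~\ref{lem:zeroext} for sufficiency: the block has the pair plus at most a diagonal entry at $c$, so its zero extension solves \eqref{eq:TRE} once the three-index equation holds.

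There are three relative positions of $c$: $c<a$, $a<c<b$, and $c>b$. There are also two parity cases, $p_c=\eps$ and $p_c\ne\eps$. In each configuration I write the $3\times3$ block with unknowns $k_{cc}$, $k_{ca},k_{ac},k_{cb},k_{bc}$ on top of the pair $k_{ab}$, $k_{ba}=-q^{-\sigma_\eps}k_{ab}$, $k_{aa}=k_{bb}=0$. I then extract a few residual coefficients $\Fc^{ij}_{xy}$ using \eqref{eq:R21action}--\eqref{eq:t2}.

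First, the two-index equations on $\{a,c\}$ and $\{b,c\}$ already give strong constraints. If $p_c\ne\eps$, \eqref{eq:mixed2local} kills the upper entry in each of these pairs and gives $k_{\max,\min}^2=-\omega k_{aa}k_{cc}=0$ (respectively with $k_{bb}$), since $k_{aa}=k_{bb}=0$. So all entries joining $c$ vanish. If $p_c=\eps$, \eqref{eq:same2} leaves only the possibility of another skew pair, or of lower-only entries where the upper entry vanishes. To exclude these I use genuinely three-index coefficients. The natural candidates are the coefficients of $v_a\ot v_c$ and $v_c\ot v_a$ in $\Fc(v_b\ot v_b)$, $\Fc(v_b\ot v_c)$ and similar ones. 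These are bilinear in the entry $k_{ab}$ and a joining entry, with coefficients that are products of powers of $q$, $q\pm1$ and $q^2+1$, as in Remark~\ref{rem:nohiddenq}. So they force every joining entry to vanish.

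The main obstacle is this bookkeeping. For each of the three orderings I must find a coefficient that isolates each joining entry multiplied by $k_{ab}$, and check that its scalar is nonzero. I would organize the work by first killing the entries at the end of the chain by looking at inputs $v_b\ot v_c$ and $v_c\ot v_a$, then returning to the remaining ones.

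Finally, with all joining entries zero, the block is the pair plus $k_{cc}$. The surviving residual coefficients are products $k_{ab}k_{cc}$, weighted by the extra terms $\omega s\,\sum_{x>i}v_x\ot v_x$ of \eqref{eq:t1}--\eqref{eq:t2}. When $c$ lies between $a$ and $b$, the diagonal tensor $v_a\ot v_a$ produces $v_c\ot v_c$, which $K$ sends into the pair and which therefore contributes. When $c\notin I_{ab}$, these sums either do not reach $c$ or reach it in a way that cancels. If $p_c=\eps$, I expect a term of the form $(q+1)k_{ab}k_{cc}$, as in the first two columns of the table in Lemma~\ref{lem:allclosure}, that does not cancel. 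If $p_c\ne\eps$, the sign $s_c$ flips and gives cancellation, producing factors like $q-1$ versus $1-q$. Computing these two or three coefficients and observing that they vanish exactly when $p_c\ne\eps$ and $c\notin I_{ab}$ gives the compatibility criterion. The converse then follows by direct substitution into all residual coefficients of the block.
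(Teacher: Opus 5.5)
Your proposal follows the paper's proof. The two-index lemma gives the first assertion. On $\{a,b,c\}$, residual coefficients bilinear in $k_{ab}$ and a joining entry kill the joining entries; the paper states the outcome as $(q-1)k_{ab}k_{ac}=(q-1)k_{ab}k_{ca}=(q-1)k_{ab}k_{bc}=(q-1)k_{ab}k_{cb}=0$, independently of $p_c$. The coefficient of $k_{ab}k_{cc}$ then decides compatibility.

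The one real difference is the case $p_c\ne\eps$. There you kill the joining entries directly by \eqref{eq:mixed2local}, because the endpoint diagonal $k_{aa}$ or $k_{bb}$ is already zero. This is cleaner than the paper's uniform three-index computation, but the three-index step is still needed when $p_c=\eps$.

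Both proofs leave these computations as direct calculation, and yours are only announced. When you do them, the nonvanishing scalars are not a bare $q+1$:
\begin{itemize}
\item for $p_c=\eps$ and $c\notin I_{ab}$, the factor is $\omega(q+q^{-1})$, a multiple of $q^4-1$; this is exactly where the hypothesis $q^2\ne-1$ enters;
\item for $p_c\ne\eps$ and $c\notin I_{ab}$, the factor is $0$;
\item for $a<c<b$, in both parities, the factor is $\omega$, a multiple of $q^2-1$.
\end{itemize}
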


\begin{proof}
The two-index calculation gives the first assertion.  Let $c$ be any third
index.  For each of the three positions of $c$ relative to
$a<b$, substitution of
$$
 k_{ba}=-q^{-{\sigma_\eps}}k_{ab},\qquad k_{aa}=k_{bb}=0
$$
into the three-index reflection equation, without the triangularity assumption,
 on $\{a,b,c\}$ gives,
after multiplication by a nonzero power of $q$, the four residual equations
$$
 (q-1)k_{ab}k_{ac}=0,\quad (q-1)k_{ab}k_{ca}=0,\quad
 (q-1)k_{ab}k_{bc}=0,\quad (q-1)k_{ab}k_{cb}=0.
$$
Thus all four entries joining $c$ to $a$ or $b$ vanish, independently of
$p_c$.  After setting them to zero, the normalized scalar multiplying
$k_{ab}k_{cc}$ in the residual coefficient involving the skew pair
and the diagonal entry is
$$
\begin{array}{c|cc}
&p_c=\eps&p_c  \ne \eps\\ \hline
c<a\ \text{or}\ b<c&q^4-1&0\\
a<c<b&q^2-1&q^2-1
\end{array}
$$
Since  $q^4\ne1$, the nonzero entries of this table cannot vanish.  Hence $k_{cc}\ne0$ is possible only when  $p_c\ne\eps$  and
$c\notin {I_{ab}}$.  Conversely, when $p_c\ne\eps$ and $c\notin I_{ab}$, the joining entries are zero and direct substitution shows that the full three-index residual of the skew pair together with $k_{cc}$ is zero.  Thus these conditions are also sufficient.
\end{proof}

\section{ Reflection equation in dimension  four}\label{sec:rank4}
By Corollary~\ref{cor:fourlocal}, the interaction between two disjoint off-diagonal supports is
determined by the reflection equation on their four endpoints.  We shall use the following three four-index statements.

Let $K$ be lower triangular, {and let} $a<b$ and $c<d$ {have four distinct
endpoints}, with $p_a=p_b$ and $p_c=p_d$.  Suppose that its principal block
on these four endpoints has precisely the two nonzero entries
$$
 K_{\{a,b,c,d\}}=k_{ba}e_{ba}+k_{dc}e_{dc},
 \qquad k_{ba}k_{dc}\ne0.
$$

\begin{proposition}\label{prop:rank4lower}
The matrix $K_{\{a,b,c,d\}}$ solves the induced twisted reflection equation if and
only if the intervals $[a,b]$ and $[c,d]$ cross.  Up to interchanging the pairs, the allowed order is
$$
 a<c<b<d.
$$
Separated and nested placements are incompatible.
\end{proposition}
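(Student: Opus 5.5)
The plan is to compute the residual $\Fc(K)$ directly for $K=k_{ba}e_{ba}+k_{dc}e_{dc}$. By Corollary~\ref{cor:fourlocal} and Lemma~\ref{lem:locality} this can be done on the four-dimensional space $V_{\{a,b,c,d\}}$. Since $K$ has no diagonal entries, the only quadratic terms that can survive are products $k_{ba}k_{dc}$ (the squares $k_{ba}^2$ and $k_{dc}^2$ vanish by the two-index analysis, Lemma~\ref{lem:twoall}). Both entries are even, because $p_a=p_b$ and $p_c=p_d$, so no super-signs arise from the $K$-factors themselves. The task is therefore to find which coefficients $\Fc^{ij}_{xy}$ pick up a multiple of $k_{ba}k_{dc}$, and to show that these multiples vanish exactly when the intervals cross.

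First I track the inputs. The term $K_1\cdots K_2$ acting on $v_i\ot v_j$ needs $K$ to act nontrivially on both legs. So the input must be, up to the exchanges produced by the R-matrices, $v_a\ot v_c$ or $v_c\ot v_a$, with output $v_b\ot v_d$ or $v_d\ot v_b$. Along the way the partially transposed factors $R^{t_1}_{21}$ and $R^{t_2}_{12}$ act as the identity on non-diagonal tensors by \eqref{eq:toff}. A diagonal tensor $v_x\ot v_x$ can only appear if the intermediate indices coincide, which is impossible because the four endpoints are distinct. Hence in each side only the outer factors $R_{21}$ matter, and by \eqref{eq:R21action} these contribute either the identity or an $\omega$-swap, depending on the relative order of the two indices they meet.

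Concretely, I plan to compute the coefficient of $v_b\ot v_d$ in $\Fc(v_a\ot v_c)$, together with the three analogous ones: input $(c,a)$, and the outputs with the legs exchanged. On the left side, $R_{21}$ acting on the input swaps legs with factor $\omega$ precisely when the first index exceeds the second. Then $K_1$ acts, $R_{21}^{t_1}$ is trivial, and $K_2$ acts. On the right side the order of $K_1$ and $K_2$ is reversed and the final $R_{21}$ acts on the output. Each resulting coefficient is $k_{ba}k_{dc}$ times a sum of terms of the form $\pm1$ or $\pm\omega$. Whether each term is present is governed by the comparisons $a\gtrless c$ (input side) and $b\gtrless d$ (output side), with signs $(-1)^{p_ap_c}$ or $(-1)^{p_bp_d}$ from the graded flip and from \eqref{eq:gradedprod}.

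The outcome I expect is the following. When the orderings of $a,c$ and of $b,d$ agree, that is $a<c$ and $b<d$ (or both reversed), the swap terms on the two sides match, and everything cancels. This is the crossing configuration $a<c<b<d$, or the one obtained by interchanging the pairs. When the orderings disagree, which is the nested case $a<c<d<b$, a lone $\omega$-term survives, so some coefficient equals $\pm\omega k_{ba}k_{dc}\ne0$. The separated case $a<b<c<d$ also has agreeing orders, so it cannot be excluded by this mechanism. The main obstacle is the sign and ordering bookkeeping, and in particular locating the additional nonvanishing coefficient that excludes separated intervals. For that case I would first examine the cross terms in which $K$ maps $v_a$ to $v_b$ while the other leg passes through $c$ and $d$. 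Since $b<c$, the relative order of the legs changes between the input and the output, and I expect an unmatched $\omega$ there. The final step is to check that, for crossing intervals, every remaining coefficient vanishes, which establishes sufficiency.
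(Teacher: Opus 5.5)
Your treatment of the nested case is correct, but the reduction it rests on is false, and the false part is exactly what excludes separated intervals. You claim that no diagonal tensor $v_x\ot v_x$ can occur because the four endpoints are distinct. In fact, the leg untouched by one $K$-factor can coincide with the index that $K$ has just produced.

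Take $a<b<c<d$ and input $v_a\ot v_b$. On the right-hand side $K_2R_{12}^{t_2}K_1R_{21}$:
\begin{itemize}
\item $R_{21}$ fixes $v_a\ot v_b$;
\item $K_1$ gives $k_{ba}v_b\ot v_b$;
\item by \eqref{eq:t2}, $R_{12}^{t_2}$ produces $\omega\sum_{x>b}s_xv_x\ot v_x$, which contains $\omega s_c v_c\ot v_c$;
\item $K_2$ then yields $\omega s_ck_{ba}k_{dc}\,v_c\ot v_d$.
\end{itemize}
On the left-hand side $K_2$ acts first and $Kv_b=0$. Hence $\Fc^{ab}_{cd}=-\omega s_ck_{ba}k_{dc}\ne0$, which is exactly the coefficient $\Fc^{12}_{34}=-s_3\omega k_{21}k_{43}$ the paper uses.

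The surviving $\omega$ comes from the diagonal part of a partially transposed R-matrix, not from an $R_{21}$-swap. Your enumeration of inputs ($v_a\ot v_c$ or $v_c\ot v_a$) discards the transition $(a,b)\to(c,d)$ outright. Your instinct to look at terms where $K$ sends $v_a$ to $v_b$ while the other leg ends in $c,d$ points the right way, but your framework cannot produce such a term. The heuristic that the legs change relative order also fails, since $a<b$ and $c<d$. As written, the separated half of the ``only if'' direction is not established.

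The same omission leaves your sufficiency check incomplete: for crossing intervals you must also show that these diagonal paths contribute nothing. They do vanish. By \eqref{eq:t1} and \eqref{eq:t2}, $v_b\ot v_b$ (resp.\ $v_d\ot v_d$) becomes a combination of $v_x\ot v_x$ with $x\ge b$ (resp.\ $x\ge d$), and the next $K$-factor needs $x\in\{a,c\}$. In the placement $a<c<b<d$ both column indices lie below both row indices, so every such term is annihilated. The same happens in the nested case, which is why only the $R_{21}$-swap survives there.

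You also have the order of factors reversed. In $R_{21}K_1R_{21}^{t_1}K_2$ the factor $K_2$ acts first and $R_{21}$ acts on the output, whereas on the right-hand side $R_{21}$ acts on the input. This does not change the nested conclusion, but it matters for the sign bookkeeping you defer.
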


\begin{proof}
Relabel the ordered endpoints as $1<2<3<4$.  In a separated or nested
placement, the coefficient $\Fc_{34}^{12}$ has two different forms.
For separated pairs $(1,2)$ and $(3,4)$, with $p_1=p_2$ and $p_3=p_4$,
it is $-s_3\omega k_{21}k_{43}$.  For nested pairs $(1,4)$ and $(2,3)$,
with $p_1=p_4$ and $p_2=p_3$, it is
$(-1)^{p_1p_2}\omega k_{41}k_{32}$.
Neither coefficient can vanish because $\omega\ne0$.  In the crossing placement the entire
two-entry residual is zero.
\end{proof}

Let $K$ be a solution, and let $a<b$ and $c<d$ be four distinct
indices.  Suppose $k_{ab}\ne0$ and $p_a=p_b=\eps$.
Lemma~\ref{lem:twoall} gives
$$
 k_{ba}=-q^{-{\sigma_\eps}}k_{ab},\qquad k_{aa}=k_{bb}=0.
$$
\begin{proposition}\label{prop:rank4upper}
{In the four-dimensional reflection equation, the following compatibility conditions hold for} the entries on $c,d$:
 \begin{enumerate}[label=\textup{(\roman*)}]
\item a skew pair on $c,d$ with $p_c=p_d=\eps$ is compatible exactly
when the intervals $[a,b]$ and $[c,d]$ are separated;
\item a lower-only  entry $k_{dc}\ne0$ with $k_{cd}=0$ and $p_c=p_d$  is
incompatible in all relative positions;
\item a skew pair on $c,d$ with $p_c=p_d\ne\eps$ is incompatible
with the skew pair on $(a,b)$.
\end{enumerate}
\end{proposition}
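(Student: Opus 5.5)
By Corollary~\ref{cor:fourlocal} and Lemma~\ref{lem:locality}, it is enough to work with the principal block on $S=\{a,b,c,d\}$. We use Proposition~\ref{prop:rank3upper} to reduce this block as far as possible before computing. Applied to the skew pair on $(a,b)$ with $c$ and then $d$ as the third index, it kills all entries joining $c$ or $d$ to $a$ or $b$. It also forces $k_{cc}=k_{dd}=0$ whenever $p_c=p_d=\eps$, or whenever $c$ or $d$ lies in $I_{ab}$. In case (i), if $k_{cd}\ne0$ then Lemma~\ref{lem:twoall} gives $k_{dc}=-q^{-\sigma_\eps}k_{cd}$ and $k_{cc}=k_{dd}=0$. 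Case (iii) is the same with $\eps$ replaced by $1-\eps$. In case (ii), $k_{cd}=0$ and, by Lemma~\ref{lem:twoall}, $k_{cc}$ and $k_{dd}$ may still be nonzero. So in all three cases the block is $k_{ab}e_{ab}+k_{ba}e_{ba}$ plus a block supported on $\{c,d\}$, and only cross terms between the two supports remain to be checked.

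The computation then proceeds as in the proof of Proposition~\ref{prop:rank4lower}. We relabel the endpoints $1<2<3<4$ and go through the three relative placements: separated, nested, and crossing, together with their mirror images. For each placement we identify one or two residual coefficients $\Fc^{ij}_{kl}$ that pick up exactly one product of an entry of the $(a,b)$ pair with an entry on $\{c,d\}$. The natural candidates are the coefficients with input $v_a\ot v_c$ or $v_c\ot v_a$ and output on the opposite pair. There, $R_{21}$ may swap the two indices, and the transposed R-matrices act as the identity off the diagonal by \eqref{eq:toff}. Such a coefficient then reduces to $\omega$ times a signed product like $k_{ab}k_{cd}$ or $k_{ba}k_{dc}$, plus possibly a second such product. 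After substituting the pair relations $k_{ba}=-q^{-\sigma}k_{ab}$, the combination should vanish precisely when the two factors $q^{-\sigma_\eps}$ and $q^{-\sigma_{p_c}}$ cancel in the correct geometric arrangement.

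Expected outcomes by case:
\begin{enumerate}[label=\textup{(\roman*)}]
\item With equal parities, the crossing and nested placements should leave a coefficient proportional to $\omega k_{ab}k_{cd}$ or to $(q^2-1)k_{ab}k_{cd}$. In the separated placement the two contributions should cancel, and a full substitution then confirms that the whole residual vanishes; this gives sufficiency.
\item The lower-only entry $k_{dc}$ has no partner $k_{cd}$ available to cancel against. Hence some coefficient of the form $\omega\, k_{ab}k_{dc}$, or $\omega\, k_{ba}k_{dc}$, survives in every placement.
\item With opposite parities, the sign $(-1)^{p_ap_c}$ and the mismatch $q^{-\sigma_\eps}\ne q^{-\sigma_{1-\eps}}$ should leave a multiple of $(q^2-q^{-2})k_{ab}k_{cd}$. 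This is nonzero because $q^4\ne1$.
\end{enumerate}

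The main obstacle is the bookkeeping: in each of the six orderings one must locate the specific residual coefficient that isolates the cross product, and track the graded signs from \eqref{eq:gradedprod} and \eqref{eq:st} correctly. The sufficiency direction of (i), namely that the entire residual vanishes in the separated placement, I would handle by direct substitution. To do so I would organize the coefficients by input pair and use Lemma~\ref{lem:zeroext}: each pair on its own is a zero-extended solution, so only mixed coefficients need to be checked.
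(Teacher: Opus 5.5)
You take the same route as the paper: cut the block down with Proposition~\ref{prop:rank3upper}, then read off, placement by placement, a residual coefficient that isolates the cross product. The paper only tabulates the resulting normalized scalars: $q^2-1$ in the crossing and nested columns, and $0$, $q^2-1$, $q^4-1$ in the separated column. Your reduction step is fine, but the step locating the decisive coefficients fails.

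Take a mixed input $v_i\ot v_j$ with $i\in\{a,b\}$, $j\in\{c,d\}$, and let $\bar i,\bar j$ be the partners. No diagonal tensor ever appears, so the transposed R-matrices act trivially, as you say. The $v_{\bar i}\ot v_{\bar j}$ terms of the two sides then cancel. What remains of the part bilinear in the two off-diagonal supports is $\pm\omega\,k_{\bar i i}k_{\bar j j}\,v_{\bar j}\ot v_{\bar i}$ when $\bar i,\bar j$ are in the opposite order to $i,j$, and zero otherwise. This is a single product, so no cancellation between $q^{-\sigma_\eps}$ and $q^{-\sigma_{p_c}}$ ever happens there. These coefficients do settle the crossing and nested placements in all three cases. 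For separated intervals, however, the relative orders always agree, so every such coefficient vanishes identically, whatever sits on $\{c,d\}$.

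Carried out as you describe, your computation would find nothing in the separated placement. It cannot prove (ii) or (iii) there, and it does not show where the cancellation in (i) happens. Your heuristics (``no partner to cancel against'', the sign $(-1)^{p_ap_c}$) do not correspond to any actual coefficient.

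The missing idea is that, for separated intervals, the cross terms come only from the diagonal-generating parts of \eqref{eq:t1}--\eqref{eq:t2}, with input on one pair and output on the other. For $a<b$, $K_2$ sends $v_a\ot v_b$ to $k_{ab}v_a\ot v_a$, and $K_1R_{21}$ sends it to $k_{ba}v_b\ot v_b$. The partially transposed R-matrix then creates $v_c\ot v_c$ and $v_d\ot v_d$, on which the second support acts. For $a<b<c<d$ this gives
\[
 \Fc^{ab}_{dc}=\omega\bigl(\sigma_\eps k_{ab}k_{dc}-\sigma_{p_c}k_{ba}k_{cd}\bigr).
\]
In the three cases this coefficient is:
\begin{enumerate}
\item[(i)] zero;
\item[(ii)] $\omega\sigma_\eps k_{ab}k_{dc}$, and possible diagonal entries $k_{cc},k_{dd}$ do not contribute;
\item[(iii)] $-\sigma_\eps(q^2-q^{-2})k_{ab}k_{cd}$.
\end{enumerate}
For the mirror order $c<d<a<b$, use $\Fc^{cd}_{ba}=\omega(\sigma_{p_c}k_{cd}k_{ba}-\sigma_\eps k_{dc}k_{ab})$. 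This reproduces the separated column of the paper's table.

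Sufficiency in (i) then means checking that the remaining cross coefficients with input on one pair and output on the other vanish after substituting both ratios $k_{ba}=-q^{-\sigma_\eps}k_{ab}$ and $k_{dc}=-q^{-\sigma_\eps}k_{cd}$. These are $\Fc^{ab}_{cd}$, $\Fc^{ba}_{cd}$ and $\Fc^{ba}_{dc}$ for $a<b<c<d$, or their analogues for $c<d<a<b$, and they do vanish.
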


\begin{proof}
Proposition~\ref{prop:rank3upper} removes all entries joining the first pair to
the two new endpoints, so it remains to consider only the interaction
of the two disjoint supports.  The table records a normalized scalar multiplying
$k_{ab}k_{dc}$ in the lower-only row and $k_{ab}k_{cd}$ in the
skew-pair rows.  A zero means that every residual coefficient for the corresponding configuration vanishes.
$$
\begin{array}{c|ccc}
\text{second support}&\text{separated}&\text{crossing}&\text{nested}\\ \hline
\text{lower-only, equal  endpoint parity }
 &q^2-1&q^2-1&q^2-1\\
\text{skew pair, endpoint parity }\eps
 &0&q^2-1&q^2-1\\
\text{skew pair, endpoint parity } \ne \eps
 &q^4-1&q^2-1&q^2-1
\end{array}
$$
The assumption $q^4\ne1$ makes every displayed factor nonzero.  The table proves
(i)--(iii).
\end{proof}

\begin{proposition}\label{prop:22obsnew}
Let $K$ be a lower-triangular solution, and let
$i_1<i_2<i_3<i_4$ contain two indices of parity $0$ and two of parity $1$.
Then
$$
 \begin{cases}
 k_{i_3i_2}k_{i_4i_1}=0,&p_{i_1}=p_{i_2},\\
 k_{i_2i_1}k_{i_4i_3}=0,&p_{i_1}\ne p_{i_2}.
 \end{cases}
$$
In each line both factors connect indices of opposite parity and hence are
odd entries.  In particular,
$\S_-$ cannot contain all six lower pairs $(i_r,i_s)$, $s<r$, on
these four indices.
\end{proposition}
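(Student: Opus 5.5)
By Corollary~\ref{cor:fourlocal} and Lemma~\ref{lem:locality}, it suffices to work with the principal block on $S=\{i_1,i_2,i_3,i_4\}$. Relabel these indices $1<2<3<4$ with the induced parities. We may therefore assume that $K$ is a lower-triangular $4\times4$ solution for a grading with two even and two odd indices. The plan is to locate one residual coefficient $\Fc^{cd}_{ab}$ in which the only surviving monomial is the product in question, with a nonzero coefficient, exactly as in the proof of Proposition~\ref{prop:rank4lower}.

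\textbf{Case $p_1=p_2$.} Then $p_3=p_4\ne p_1$. The claimed monomial is $k_{32}k_{41}$, a nested placement of two odd entries. By analogy with the nested computation in Proposition~\ref{prop:rank4lower}, I will examine $\Fc^{12}_{34}$, and possibly its partner $\Fc^{21}_{43}$ if the first has extra terms.

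I will expand both sides of \eqref{eq:TRE} on $v_1\ot v_2$ using \eqref{eq:R21action}--\eqref{eq:t2}.
\begin{itemize}
\item Since $1<2$, the right factor $R_{21}$ acts trivially on $v_1\ot v_2$. The off-diagonal partial transposes also act trivially by \eqref{eq:toff}.
\item The only terms that can land in $v_3\ot v_4$ send $v_1\mapsto v_4$ and $v_2\mapsto v_3$, or pass through a diagonal tensor created by \eqref{eq:t1}, \eqref{eq:t2}.
\item A diagonal tensor $v_a\ot v_a$ can only arise after $K_1$ has mapped index $2$ to index $1$ or vice versa. Lower triangularity forbids $2\to1$, and the term $1\to2$ produces $v_2\ot v_2$. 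Its subsequent contributions give monomials such as $k_{21}k_{\cdot\cdot}$ times a $K_2$ entry.
\end{itemize}
These extra terms must be tracked. The expected outcome is that they cancel between the two sides, or carry factors that are forced to vanish by lower triangularity (for instance $k_{43}$ applied from $v_3$ on the wrong leg). What should remain is a coefficient of the form $\pm\omega\,(\text{sign})\,k_{32}k_{41}$, possibly multiplied by $(q+1)$-type factors that are nonzero by Remark~\ref{rem:nohiddenq}. Since $\omega\ne0$, this gives $k_{32}k_{41}=0$.

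\textbf{Case $p_1\ne p_2$.} Either $p_3=p_2$ and $p_4=p_1$ (alternating), or $p_3=p_1$ and $p_4=p_2$. The claimed monomial $k_{21}k_{43}$ is a separated placement of two odd entries, and I will examine $\Fc^{12}_{34}$ as in the separated case of Proposition~\ref{prop:rank4lower}. The same bookkeeping applies. The new feature relative to Proposition~\ref{prop:rank4lower} is that the other lower entries, for example $k_{31}$, $k_{42}$ and the diagonal entries, are not assumed zero. The extra monomials are therefore products like $k_{31}k_{42}$ (crossing) or $k_{41}k_{32}$, together with diagonal-tensor contributions.

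\textbf{Main obstacle.} I expect the heart of the proof to be showing that these competing monomials either do not appear in the chosen coefficient or cancel. The graded sign \eqref{eq:gradedprod} differs for odd versus even entries, and that sign should produce exactly this cancellation in the mixed parity pattern, while leaving the target monomial with coefficient $\pm\omega$ or $\pm(q^2-1)$. If $\Fc^{12}_{34}$ does not isolate the monomial, I would take a linear combination with $\Fc^{21}_{34}$ or $\Fc^{12}_{43}$ to eliminate the crossing term. Failing that, I would use the three-index relations \eqref{eq:cl} of Lemma~\ref{lem:allclosure} to rewrite the crossing products through diagonal entries, and then show the combination reduces to a nonzero multiple of the target monomial.

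\textbf{Final statement.} The last assertion follows immediately. If all six lower pairs were in $\S_-$, then in either case the nonzero product displayed in the statement would contradict the vanishing just established.
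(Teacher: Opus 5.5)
Your strategy of finding one residual coefficient on the four indices in which only the target product survives is the paper's strategy. However, the coefficients you commit to do not isolate anything, and the expected cancellation of the extra terms does not happen. Relabel $1<2<3<4$ and write $s_i=(-1)^{p_i}$. A direct expansion with \eqref{eq:R21action}--\eqref{eq:t2} gives, for $p_1=p_2$,
\[
\Fc^{12}_{34}=2s_1k_{31}k_{42}-\omega k_{41}k_{32}+\omega s_1k_{21}k_{43}.
\]
Here the crossing product survives, and so does the separated product coming from the diagonal tensor created by $R_{12}^{t_2}$. This case is still rescued by your fallback, since $\Fc^{21}_{43}-\Fc^{12}_{34}=2\omega k_{41}k_{32}$. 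For $p_1\ne p_2$ with parity words $0101$, $1010$ your fallback also works, because $\Fc^{12}_{43}=2s_1k_{41}k_{32}$ there.

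The genuine failure is the case $p_1\ne p_2$ with parity words $0110$, $1001$. There
\[
\Fc^{12}_{34}=2s_1k_{31}k_{42}+\omega k_{41}k_{32}-\omega k_{21}k_{43},
\]
while both of your proposed partners, $\Fc^{12}_{43}$ and $\Fc^{21}_{34}$, vanish identically. So no combination of the coefficients you name yields $k_{21}k_{43}=0$. The last resort via \eqref{eq:cl} does not close this either. Those relations only trade a product of two edges sharing a vertex for a diagonal-times-edge product, whereas $k_{31}k_{42}$ and $k_{41}k_{32}$ have disjoint endpoints.

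The missing idea is to choose input and output so that neither the $\omega$-exchange term of $R_{21}$ nor a diagonal tensor from a partial transpose can reach the target. The exchange term only produces $v_a\ot v_b$ with $a<b$. So for $p_1=p_2$, read off $v_4\ot v_3$ rather than $v_3\ot v_4$; this gives $\Fc^{12}_{43}=(s_1-s_4)k_{41}k_{32}=2s_1k_{41}k_{32}$ directly. For $p_1\ne p_2$, feed in $v_1\ot v_3$ and read off $v_2\ot v_4$. Then the only path on either side is $1\to2$ on the first leg and $3\to4$ on the second, and
\[
\Fc^{13}_{24}=\bigl((-1)^{(p_3+p_4)p_1}-(-1)^{(p_3+p_4)p_2}\bigr)k_{21}k_{43}=2s_1k_{21}k_{43}.
\]
These are exactly the paper's coefficients. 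The nonzero factor is $\pm2$, coming purely from the Koszul sign mismatch between the two sides, not $\pm\omega$ or $\pm(q^2-1)$ as you anticipated.
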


\begin{proof}
The diagonal entries play no role in this obstruction.  Their relations with
the lower support are recorded separately in Lemma~\ref{lem:allclosure}; the
two coefficients used below contain only the displayed off-diagonal products.
Here the parity word means the four-tuple $(p_{i_1},p_{i_2},p_{i_3},p_{i_4})$.
If $p_{i_1}=p_{i_2}$, the coefficient $\Fc_{i_4i_3}^{i_1i_2}$ is
$$
 2k_{i_3i_2}k_{i_4i_1}\quad{\text{for }0011},
 \qquad
 -2k_{i_3i_2}k_{i_4i_1}\quad{\text{for }1100}.
$$
If $p_{i_1}\ne p_{i_2}$, the parity word is one of
$0101,0110,1001,1010$.  The coefficient $\Fc_{i_2i_4}^{i_1i_3}$ is
$$
 2k_{i_2i_1}k_{i_4i_3}
 \quad\text{for }0101,0110,
 \qquad
 -2k_{i_2i_1}k_{i_4i_3}
 \quad\text{for }1001,1010.
$$
The residual vanishes, so the asserted products are zero.  The two endpoints
of each displayed factor have opposite parities.
\end{proof}

\section{Classification of K-matrices}\label{sec:structure}
{In this section we} assemble the local restrictions into the full classification {of
K-matrices}.  The three subsections {address}, respectively, even lower-triangular solutions, solutions with a nonzero upper entry, and non-even lower-triangular solutions.  The three branches are collected in
Theorem~\ref{thm:structureall} at the end of the section.

\begin{lemma}\label{lem:upperforces}
Let $K$ be a solution of \eqref{eq:TRE}.  If
$\S_+\ne\varnothing$, then $K$ is even.
\end{lemma}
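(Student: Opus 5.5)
We need to show: if some $k_{ab}\ne0$ with $a<b$, then every odd entry vanishes. By Lemma~\ref{lem:twoall}, every nonzero odd entry lies strictly below the diagonal, so the only candidates are entries $k_{dc}$ with $c<d$ and $p_c\ne p_d$. Moreover, that same lemma forces both endpoint diagonal entries of such an odd entry to be nonzero: $k_{cc}k_{dd}\ne0$, since $k_{dc}^2=-\omega k_{cc}k_{dd}$. The plan is to fix the upper entry $k_{ab}\ne0$, suppose some odd $k_{dc}\ne0$, and derive a contradiction from the local results.

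First, Proposition~\ref{prop:rank3upper} gives $p_a=p_b=\eps$, $k_{aa}=k_{bb}=0$, and $k_{ba}=-q^{-\sigma_\eps}k_{ab}$. Since $k_{cc}$ and $k_{dd}$ are nonzero while $k_{aa}=k_{bb}=0$, neither $c$ nor $d$ equals $a$ or $b$. So $\{a,b\}$ and $\{c,d\}$ are disjoint, and we work on the four-index principal block. This is legitimate by Lemma~\ref{lem:locality}.

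Next, Proposition~\ref{prop:rank3upper} applied with third index $c$, and again with third index $d$, kills all entries joining $c$ or $d$ to $a$ or $b$. It also constrains the diagonals: $k_{cc}\ne0$ requires $p_c\ne\eps$ and $c\notin I_{ab}$, and likewise for $d$. But $p_c\ne p_d$, so one of $c,d$ has parity $\eps$ and its nonzero diagonal entry is incompatible with the skew pair. This is the contradiction, and it is already three-local, using the triple $\{a,b,x\}$ where $x\in\{c,d\}$ is the endpoint with $p_x=\eps$.

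The key step is therefore just the observation that an odd entry forces nonzero diagonals at both endpoints, while the two endpoints have different parities. There is no real obstacle. The only care needed is checking that the endpoints differ from $a,b$, which holds because $k_{aa}=k_{bb}=0$, and that Proposition~\ref{prop:rank3upper} is stated without any triangularity or evenness assumption, which it is. Hence all odd entries vanish and $K$ is even.
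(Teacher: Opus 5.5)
Your proposal is correct and is essentially the paper's proof: the upper entry forces $p_a=p_b=\eps$ and $k_{aa}=k_{bb}=0$, a nonzero odd entry forces nonzero diagonals at both of its endpoints, and the endpoint $x$ with $p_x=\eps$ then contradicts either $k_{aa}=k_{bb}=0$ (the paper treats $x\in\{a,b\}$ as a separate case, whereas you exclude it up front) or Proposition~\ref{prop:rank3upper} on the triple $\{a,b,x\}$. Your detour through the four-index block and the vanishing of the joining entries is harmless but not needed, as you yourself note.
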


\begin{proof}
Choose $(a,b)\in\S_+$.  Lemma~\ref{lem:twoall} shows that
$p_a=p_b=\eps$, and the $p_a=p_b$ branch of the two-index equations gives
$$
 k_{ba}=-q^{-\sigma_\eps}k_{ab},\qquad k_{aa}=k_{bb}=0.
$$
Assume that an odd entry $k_{ji}\ne0$ also occurs.  By
Lemma~\ref{lem:twoall} it lies below the diagonal and both of its endpoint
diagonals are nonzero.  One endpoint, say $c$, has parity $\eps$.  If
$c=a$ or $c=b$, this already contradicts $k_{aa}=k_{bb}=0$
by Lemma~\ref{lem:twoall}(ii).  Otherwise,
restrict  the RE to the three indices $a,b,c$.  Proposition~\ref{prop:rank3upper}
forces $k_{cc}=0$, again a contradiction.  Hence a solution with an upper
entry has no odd entries.
\end{proof}

Thus every non-even solution satisfies $\S=\Lc$, so it is lower
triangular.  It remains to determine its possible mixed lower support.

\subsection{Even lower-triangular solutions}
\begin{lemma}\label{lem:separator}
Let $K$ be an even solution of \eqref{eq:TRE} and suppose $k_{rr}\ne0$.
Then $k_{ii}=0$ for every $i$ with $p_i\ne p_r$.  If, in addition,
$i<j$ satisfy $p_i=p_j\ne p_r$, then
\begin{enumerate}[label=\textup{(\roman*)}]
\item $i<r<j$ {implies} $k_{ij}=k_{ji}=0$;
\item $r<i<j$ or $i<j<r$ {implies}
$k_{ji}=-q^{-s_i}k_{ij}$.
\end{enumerate}
Consequently, if $K$ is lower triangular and $k_{rr}\ne0$ with
$p_r=\eps$, then $k_{ij}=0$ whenever $p_i=p_j\ne\eps$.
\end{lemma}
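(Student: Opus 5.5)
The plan is to reduce each assertion to a principal block of dimension two or three and read off the conclusion from results already established.

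\textbf{Diagonal entries of opposite parity.} Since $K$ is even, Lemma~\ref{lem:two} gives $k_{ii}k_{rr}=0$ whenever $p_i\ne p_r$, by \eqref{eq:oppdiag}. As $k_{rr}\ne0$, this forces $k_{ii}=0$, which is the first claim.

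\textbf{Entries on $i,j$ of parity $\ne p_r$.} Fix $i<j$ with $p_i=p_j\ne p_r$. Since $r\notin\{i,j\}$, I restrict to the principal block on $S=\{i,j,r\}$; by Lemma~\ref{lem:locality}, $K_S$ solves the induced equation. The argument splits according to $k_{ij}$.

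If $k_{ij}\ne0$, then the pair $(a,b)=(i,j)$ has $\eps=p_i$. Proposition~\ref{prop:rank3upper}, applied with $c=r$, says the nonzero diagonal $k_{rr}$ is compatible with this skew pair only if $p_r\ne\eps$ and $r\notin I_{ij}$. The parity condition holds, so we must have $r\notin I_{ij}$.
\begin{itemize}
\item In case (i), where $i<r<j$, this is a contradiction. Hence in case (i) we get $k_{ij}=0$.
\item In case (ii), Lemma~\ref{lem:two} (or Lemma~\ref{lem:twoall}(i)) gives $k_{ji}=-q^{-s_i}k_{ij}$ directly.
\end{itemize}

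If $k_{ij}=0$, the block on $S$ is lower triangular in the $\{i,j\}$ part. The entries joining $r$ to $i$ and $j$ are odd, and they vanish because $K$ is even. So the block on $S$ is $k_{rr}e_{rr}+k_{ji}e_{ji}$ plus the (zero) diagonal entries at $i$ and $j$. This is lower triangular, and Lemma~\ref{lem:allclosure} applies to it: a nonzero diagonal at $r$ together with a nonzero edge joining $i,j$ would force $\{i,j,r\}$ to be a marked clique, contradicting $k_{ii}=0$. Hence $k_{ji}=0$. This gives $k_{ji}=0$ in both cases (i) and (ii), and in case (ii) it agrees with $k_{ji}=-q^{-s_i}k_{ij}=0$.

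Combining the two branches: in case (i) we have $k_{ij}=0$, so the second branch gives $k_{ji}=0$. In case (ii) the relation $k_{ji}=-q^{-s_i}k_{ij}$ holds in both branches.

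\textbf{The consequence.} For lower-triangular $K$ we have $k_{ij}=0$ for $i<j$, so the argument above yields $k_{ji}=0$ for every $i<j$ with $p_i=p_j\ne\eps$ and every relative position of $r$. The diagonal entries $k_{ii}$ with $p_i\ne\eps$ vanish by the first claim. Together these give $k_{ij}=0$ whenever $p_i=p_j\ne\eps$.

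\textbf{Main obstacle.} The step needing the most care is applying Lemma~\ref{lem:allclosure} when $K$ itself is not globally lower triangular. This is resolved by noting that $K_S$ is lower triangular once $k_{ij}=0$ and the odd joining entries vanish, so the lemma applies to the three-index block itself.
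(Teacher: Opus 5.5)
Your proof is correct, but it is organized differently from the paper's.

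The paper proves (i) and (ii) by writing down new three-index residual coefficients on $\{i,r,j\}$:
\begin{itemize}
\item $(q^2-1)k_{rr}k_{ij}$ and $(q^2-1)k_{rr}k_{ji}$ when $i<r<j$;
\item the single coefficient $(q^2-1)k_{rr}(k_{ji}+q^{-s_i}k_{ij})$ when $r$ lies outside $[i,j]$.
\end{itemize}
Both claims then follow at once, with no case distinction on $k_{ij}$.

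You instead split according to whether $k_{ij}$ vanishes, and reduce everything to results the paper has already tabulated.
\begin{itemize}
\item For $k_{ij}\ne0$: Proposition~\ref{prop:rank3upper} excludes $i<r<j$; its middle-position entry $q^2-1$ is exactly the coefficient of $k_{ij}k_{rr}$. Lemma~\ref{lem:twoall}(i) then supplies the ratio in case (ii).
\item For $k_{ij}=0$: evenness and the first claim reduce the block on $\{i,j,r\}$ to $k_{rr}e_{rr}+k_{ji}e_{ji}$, which is lower triangular. In each of the three positions of $r$, one of the relations \eqref{eq:cl} of Lemma~\ref{lem:allclosure} equates a nonzero multiple of $k_{rr}k_{ji}$ with a nonzero multiple of a product of vanishing odd entries. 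Hence $k_{ji}=0$.
\end{itemize}

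What your route buys: it needs no fresh expansion of the residual. It also makes transparent that in case (ii) the relation is just the two-index skew-pair ratio when $k_{ij}\ne0$, and the trivial identity $0=0$ otherwise.

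What the paper's route buys: a uniform linear identity in $(k_{ij},k_{ji})$, forced directly by the presence of $k_{rr}$, so no case split is needed.

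You also handle the diagonal entries in the final consequence explicitly via the first claim; the paper leaves this implicit.
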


\begin{proof}
The first statement follows from Lemma~\ref{lem:two}, applied to the pair
$\{i,r\}$.  The other assertions involve only the indices $i,r,j$.  When
$i<r<j$, two coefficients of the three-index residual, normalized by powers of $q$, are
$$
 (q^2-1)k_{rr}k_{ij},\qquad (q^2-1)k_{rr}k_{ji}.
$$
When $r<i<j$ or $i<j<r$,  there is a normalized three-index residual coefficient
$$
 (q^2-1)k_{rr}(k_{ji}+q^{-s_i}k_{ij}).
$$
Since $k_{rr}\ne0$, these coefficients give (i) and (ii).  In the lower-triangular case $k_{ij}=0$, and either relation then forces $k_{ji}=0$ as well.
\end{proof}

Let $K$ be even and lower triangular. If $K$ solves \eqref{eq:TRE}
and $k_{rr}\ne0$, put $\eps=p_r$. Lemma~\ref{lem:separator} then gives
$k_{ij}=0$ whenever $p_i=p_j\ne\eps$. Since $K$ is even, its
mixed-parity entries vanish. Hence its support is in
$I_\eps\times I_\eps$.

\begin{lemma}\label{lem:reduction}
Fix $\eps\in\{0,1\}$. Suppose that the support of $K$ is in
$I_\eps\times I_\eps$. In the notation of Lemma~\ref{lem:locality},
let $K_{I_\eps}$ denote the principal block of $K$ on $I_\eps$. Then
$$
 \Fc_V(K)=0\quad\Longleftrightarrow\quad
 \Fc{_{V_{I_\eps}}}(K_{I_\eps})=0.
$$
\end{lemma}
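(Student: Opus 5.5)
The argument has two directions, and both reduce to the two locality lemmas of Section~\ref{sec:local}.

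The forward direction ($\Rightarrow$) is immediate from Lemma~\ref{lem:locality} with $S=I_\eps$. Projecting $\Fc_V(K)=0$ onto $V_{I_\eps}\ot V_{I_\eps}$ gives $\Fc_{V_{I_\eps}}(K_{I_\eps})=0$. Here the induced grading on $V_{I_\eps}$ is the constant grading $\eps$.

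For the converse ($\Leftarrow$), assume $K_{I_\eps}$ solves the equation on $V_{I_\eps}$. Since the support of $K$ lies in $I_\eps\times I_\eps$, $K$ coincides with the zero extension $\widetilde{K_{I_\eps}}$ of $K_{I_\eps}$ to $V$: every entry $k_{ij}$ with $i\notin I_\eps$ or $j\notin I_\eps$ vanishes. Lemma~\ref{lem:zeroext}, applied with $S=I_\eps$ and $A=K_{I_\eps}$, then gives $\Fc_V(K)=0$.

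The only point I would check is that Lemma~\ref{lem:zeroext} really applies when $S$ is not an interval of $I$, i.e.\ when indices of the other parity lie between elements of $I_\eps$. As stated, the lemma allows an arbitrary subset $S\subseteq I$, so the reduction is legitimate.

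If that step needs confirming, I would use Corollary~\ref{cor:fourlocal}. Fix a coefficient $\Fc^{cd}_{ab}$ and set $T=\{a,b,c,d\}$. Every term of $\Fc$ is a product of one entry of $K_1$ and one entry of $K_2$. Outside indices can only be created in pairs on the diagonal $v_x\ot v_x$ by \eqref{eq:t1}--\eqref{eq:t2}, and each such term is killed by the adjacent $K$ unless $x\in I_\eps$. The surviving terms therefore involve only entries of $K_{T\cap I_\eps}$, with the transposition sums running over $I_\eps$. Since $R_{21}$ and the partially transposed R-matrices act on pairs of indices of equal parity exactly as in the induced grading, these surviving terms coincide with the corresponding coefficient of the residual of $K_{I_\eps}$, which vanishes.

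The one place requiring care is the sign $(-1)^{p_ip_j}$ in \eqref{eq:R21action} and in the graded product \eqref{eq:gradedprod}. All entries of $K$ involved are even, with both indices of parity $\eps$, so these signs are identical in $V$ and $V_{I_\eps}$.
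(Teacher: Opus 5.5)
Your proof is correct and is the paper's argument: the forward implication is Lemma~\ref{lem:locality} with $S=I_\eps$, and the converse is Lemma~\ref{lem:zeroext}, because $K$ is the zero extension of $K_{I_\eps}$. Your extra verification through the diagonal terms created by $R_{21}^{t_1}$ and $R_{12}^{t_2}$ just re-derives Lemma~\ref{lem:zeroext} in this case; it is harmless but not needed.
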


\begin{proof}
The forward implication is Lemma~\ref{lem:locality}, applied to the
principal subspace $V_{I_\eps}$.  The reverse implication is
Lemma~\ref{lem:zeroext}, since $K$ is the zero extension of $K_{I_\eps}$ to $V$.
\end{proof}

For this principal block, we use the normalization
\begin{equation}\label{eq:hdef}
 h_{ij}=\gm_\eps^{-1}k_{ij}\quad(j<i),\qquad d_i=k_{ii}.
\end{equation}
By Proposition~\ref{prop:rank3hom}, the triple relations
\eqref{eq:Ntriple} hold.

Let $i,j,k,\ell$ be four indices of parity $\eps$. For a lower-triangular
block on these four indices satisfying the normalized triple relations
\eqref{eq:Ntriple} on each of its four triples, the reflection
equation in dimension four is equivalent to
\begin{equation}\label{eq:N4}
 h_{ji}h_{\ell k}=h_{kj}h_{\ell i},
 \qquad i<j<k<\ell .
\end{equation}
\noindent Indeed, relabel the four indices as $1<2<3<4$ and put
$$
 \Delta=h_{21}h_{43}-h_{32}h_{41}.
$$
Denote by $\mathcal A_4$ the polynomial algebra
 $$
 \mathcal A_4=\C[d_1,d_2,d_3,d_4,
 h_{21},h_{31},h_{32},h_{41},h_{42},h_{43}],
$$
and let $\mathcal I_3\subset\mathcal A_4$ be the ideal generated by the normalized relations
\eqref{eq:Ntriple} on each of the four
triples.  Modulo $\mathcal I_3$, every residual coefficient is zero except
for the following coefficient and its negative:
$$
 \Fc_{34}^{12}(K)\equiv
 \begin{cases}
 q^{-3}(1-q)(1+q)^3\Delta,&\eps=0,\\
 -q^{-1}(1-q)(1+q)^3\Delta,&\eps=1.
 \end{cases}
 \pmod{\mathcal I_3}.
$$
Thus the four-index residual forces \eqref{eq:N4}; conversely, imposing
$\Delta=0$ makes every residual coefficient vanish modulo $\mathcal I_3$.
The {multipliers $q^{-3}(1-q)(1+q)^3$ and
$-q^{-1}(1-q)(1+q)^3$ are nonzero under the standing assumption on $q$}.

The four-index locality  principle then asserts that the relations \eqref{eq:Ntriple}, together with \eqref{eq:N4},
are equivalent to the reflection equation for lower-triangular matrices on any set of
indices of parity $\eps$, with the inherited order.

\begin{lemma}\label{lem:closure}
Assume that \eqref{eq:Ntriple} and \eqref{eq:N4} hold
on a set of indices of parity $\eps$, with the inherited order.  For
$i<j<k$, the following closure rules hold.
\begin{enumerate}[label=\textup{(\roman*)}]
\item A nonzero diagonal at one vertex together with a nonzero edge joining the other two vertices forces all three edges and all three diagonals on the triple to be nonzero (a marked clique).
\item A nonzero diagonal at one vertex together with two nonzero edges incident with that vertex likewise forces a marked clique on the triple.
\end{enumerate}
\end{lemma}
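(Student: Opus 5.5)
The plan is to work only with the three normalized triple relations \eqref{eq:Ntriple} on $i<j<k$,
$$
 d_i h_{kj}=h_{ji}h_{ki},\qquad h_{ji}h_{kj}=d_jh_{ki},\qquad h_{ji}d_k=h_{ki}h_{kj},
$$
and run a short case analysis on which vertex carries the nonzero diagonal. Relation \eqref{eq:N4} is not needed, since both rules involve a single triple. For each relation, one side is the diagonal at one vertex times the edge joining the other two vertices, and the other side is the product of the two edges incident with that vertex. This symmetry is already visible in Lemma~\ref{lem:allclosure}, and it makes both rules immediate from cancellations.

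\textbf{Rule (i).} Let $r$ be the marked vertex and $e$ the opposite edge, with $d_r\ne0$ and $h_e\ne0$. The relation attached to $r$ has the form $d_r h_e=h_{e'}h_{e''}$, where $e',e''$ are the two edges at $r$. Since the left side is nonzero, both $h_{e'}$ and $h_{e''}$ are nonzero, so all three edges are nonzero. Next, for each other vertex $s\ne r$, the relation attached to $s$ reads $d_s h_{f}=h_{f'}h_{f''}$, where $f$ is the edge opposite $s$ and $f',f''$ are the edges at $s$. Its right side is now a product of two nonzero edges. Hence $d_s h_f\ne0$, which gives $d_s\ne0$. All three diagonals are therefore nonzero, and the triple is a marked clique.

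\textbf{Rule (ii).} Suppose $d_r\ne0$ and both edges $e',e''$ at $r$ are nonzero. The relation attached to $r$ gives $d_r h_e=h_{e'}h_{e''}\ne0$, so the opposite edge $h_e$ is nonzero. We are now in the situation of rule (i), which finishes the argument.

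It remains to check that the three displayed relations really have the "diagonal at $v$ times opposite edge equals product of edges at $v$" shape for $v=i,j,k$. This follows from the relations as written: $d_i$ pairs with $h_{kj}$, $d_j$ with $h_{ki}$, and $d_k$ with $h_{ji}$. No step is a genuine obstacle. The only care needed is to state clearly that the normalization by $\gm_\eps$ does not affect which entries are nonzero, so the conclusions transfer back to $k_{ij}$.
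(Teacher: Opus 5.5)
Your proof is correct and matches the paper's argument: for (i), the relation at the marked vertex forces both incident edges to be nonzero, and the other two relations then force the remaining diagonals; for (ii), the relation at the marked vertex produces the opposite edge and reduces to (i). Your uniform ``diagonal times opposite edge equals product of incident edges'' reading of the three relations, and your remark that \eqref{eq:N4} is never used, are a cleaner way of writing the paper's ``the other cases are identical.''
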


\begin{proof}
For (i), if $d_i h_{kj}\ne0$, then the first identity in
\eqref{eq:Ntriple} gives
$h_{ji}h_{ki}\ne0$; substituting these nonzero factors into
the second and third identities in \eqref{eq:Ntriple} gives $d_jd_k\ne0$.  The cases
$d_jh_{ki}\ne0$ and $d_kh_{ji}\ne0$ are identical, using
the second and third identities in \eqref{eq:Ntriple}, respectively.

For (ii), if for instance $d_i$, $h_{ji}$ and $h_{ki}$ are nonzero,
then the first identity in \eqref{eq:Ntriple} forces $h_{kj}\ne0$, after which (i) applies.
The cases with the marked vertex $j$ or $k$ are obtained from
the second or third identity in \eqref{eq:Ntriple}.
\end{proof}

For $S\subseteq I_\eps$, $|S|\ge3$, and nonzero parameters $x_i$ define a clique matrix
\begin{equation}\label{eq:dense}
 L_{\eps,S}(x)=
 \sum_{i\in S}x_i^2e_{ii}
 +\gm_\eps\sum_{\substack{i,j\in S\\j<i}}x_ix_j e_{ij}.
\end{equation}
 For $a<b$ with $p_a=p_b$ define
\begin{equation}\label{eq:two-block}
 B_{ab}(A,B,C)=Ae_{aa}+Be_{ba}+Ce_{bb}.
\end{equation}

\begin{proposition}\label{prop:homdiag}
Assume $p_1=\cdots=p_d=\eps$.  Let $K$
be a lower-triangular solution of \eqref{eq:TRE} with at least one nonzero diagonal entry.  Then it belongs
to exactly one of the following three families:
\begin{enumerate}[label=\textup{(D\arabic*)}]
\item $K$ is diagonal;
\item $K=B_{ab}(A,B,C)$ for some $a<b$, with $B\ne0$ and $(A,C)\ne(0,0)$;
\item $K=L_{\eps,S}(x)$ for a set $S$ with $|S|\ge3$ and all $x_i\ne0$.
\end{enumerate}
Conversely every matrix in these families solves \eqref{eq:TRE}.
\end{proposition}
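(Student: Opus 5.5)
Since all indices have parity $\eps$, the equation for a lower-triangular $K$ is equivalent, by the four-index locality principle and the discussion preceding Lemma~\ref{lem:closure}, to the normalized triple relations \eqref{eq:Ntriple} on every triple together with \eqref{eq:N4} on every quadruple. Both directions of the proposition will be checked against this system only.

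\emph{Converse.} A diagonal matrix makes every term of \eqref{eq:Ntriple} and \eqref{eq:N4} vanish, since each term contains an off-diagonal factor. For $B_{ab}(A,B,C)$, only one off-diagonal $h$ is nonzero. Every product of two off-diagonal entries in \eqref{eq:Ntriple} or \eqref{eq:N4} then vanishes. Each mixed term $d\cdot h$ in \eqref{eq:Ntriple} pairs a diagonal at one vertex with the edge joining the other two, so it never uses $d_a$ or $d_b$ together with $h_{ba}$. For $L_{\eps,S}(x)$ we have $h_{ji}=x_ix_j$ and $d_i=x_i^2$ on $S$, and all entries outside $S$ vanish. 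Each relation then reads $x_i^2x_jx_k=x_ix_j\cdot x_ix_k$, or $x_ix_jx_kx_\ell=x_ix_jx_kx_\ell$. If one index lies outside $S$, both sides vanish, because every term contains an entry touching that index. The families are mutually exclusive: (D1) has no edge, (D2) has exactly one edge, and (D3) has at least three edges since $|S|\ge3$.

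\emph{Direct part.} Suppose $K$ is not diagonal, and fix a marked vertex $r$ and an edge $i-\!\!\!-j$.

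If $r\notin\{i,j\}$, Lemma~\ref{lem:closure}(i) makes $\{r,i,j\}$ a marked clique. So in every case there is a marked vertex incident with an edge.

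Let $S$ be a maximal marked clique containing such an edge, allowing $|S|=2$. If some $S$ has $|S|\ge3$, I will show that $K=L_{\eps,S}(x)$. Take any vertex $c\notin S$ and two vertices $u,v\in S$. If $c$ is marked, Lemma~\ref{lem:closure}(i) applied to $\{c,u,v\}$ gives a clique, so $k_{cu}\ne0$ for every $u\in S$. Applying (i) again to each triple $\{c,u,v\}$ extends $S$, contradicting maximality. If $c$ has an edge to some $u\in S$, then Lemma~\ref{lem:closure}(i) with the marked vertex $v\in S$ and the edge $c-u$ forces a clique, again a contradiction. An edge $c-c'$ with both ends outside $S$ is excluded by (i) using any marked $u\in S$. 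A diagonal at a vertex outside $S$ was already excluded. Hence the support lies in $S\times S$, with every entry of $K_S$ nonzero.

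It remains to parametrize. Relations \eqref{eq:Ntriple} give $h_{ji}^2=d_id_j$ on every triple: multiply the first relation by the third, $d_id_kh_{kj}h_{ji}=h_{ji}h_{ki}^2h_{kj}$, so $h_{ki}^2=d_id_k$. Choose $x_i$ with $x_i^2=d_i$ and fix the signs of $x$ using the edges from the smallest vertex $m\in S$, so that $h_{im}=x_ix_m$. The first relation of \eqref{eq:Ntriple} on $\{m,j,k\}$ then yields $h_{kj}=x_jx_k$. Relation \eqref{eq:N4} is automatically consistent.

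If every maximal such clique has size $2$, there is a marked vertex on an edge $a-\!\!\!-b$ that lies in no triangle with a marked vertex. I will show that $K=B_{ab}$.

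Any other vertex $c$ that is marked would give a clique with $a,b$ by (i). An edge at $a$ or $b$ going to a third vertex $c$ gives two edges at the marked endpoint, unless the marked endpoint is the other one. In that case Lemma~\ref{lem:closure}(i) applies to the marked endpoint together with the edge $a-c$ or $b-c$.

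An edge $c-e$ disjoint from $\{a,b\}$ is excluded by (i) using the marked vertex among $a,b$.

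So the only remaining entries are $d_a$, $d_b$ and $h_{ba}$, which is (D2).

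The main obstacle I expect is this case bookkeeping, especially handling an edge at the unmarked endpoint of $a-\!\!\!-b$. I would try the closure rules first and fall back on the explicit relations \eqref{eq:Ntriple} if those cases do not close.
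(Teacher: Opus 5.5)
Your proof is correct and follows essentially the paper's route. You use the same reduction to \eqref{eq:Ntriple} and \eqref{eq:N4}, the closure rules of Lemma~\ref{lem:closure} to show that the active set is either $\{a,b\}$ or a single marked clique, and the same parametrization anchored at $m=\min S$. Organizing the combinatorics around a maximal marked clique, instead of the paper's split by the number of active vertices followed by induction, is only a cosmetic difference.

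One step needs an extra line. Multiplying the first and third relations gives $h_{ki}^2=d_id_k$ only for the \emph{outer} pair of a triple $i<j<k$. When $j$ is the successor of $m$ in $S$, no triple of $S$ has $(m,j)$ as its outer pair. For that $j$, take $k\in S$ with $k>j$ and eliminate $h_{kj}$ between the first two relations on $\{m,j,k\}$; this gives $h_{jm}^2=d_md_j$. It is exactly the paper's ``first identity combined with the second or third according to the position of the third vertex''.

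Also, a marked clique on two vertices requires both endpoints to be marked. That fails in (D2) when $AC=0$, so the phrase ``allowing $|S|=2$'' is not well defined. Your case split should simply be whether a marked clique with at least three vertices exists, which is what your argument actually uses.
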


\begin{proof}
The displayed families satisfy the normalized triple equations 
\eqref{eq:Ntriple} and
\eqref{eq:N4}, hence solve \eqref{eq:TRE} by the four-index locality principle.  Conversely, use the marked vertices and edges introduced before
Lemma~\ref{lem:allclosure}.  With no edge the solution is diagonal, while
an edge with exactly two active vertices gives \eqref{eq:two-block}.  If at
least three active vertices occur, choose a marked one.  An edge avoiding
that vertex yields a marked clique on three vertices by
Lemma~\ref{lem:closure}(i).  If every edge is incident with {the chosen marked
vertex}, either two such edges exist and Lemma~\ref{lem:closure}(ii) applies,
or a third active vertex is marked and lies off the unique edge, so part (i)
applies again.  Once a marked clique on three vertices has been
obtained, parts (i) and (ii) attach {every other} marked vertex to the whole
clique.  An
unmarked active vertex lies on an edge; choosing a vertex of this clique distinct
from the two endpoints and applying part (i) first marks both endpoints.
Induction on the number of active vertices shows that the active set $S$ is
a marked clique.

Choose $i_0=\min S$, $x_{i_0}^2=d_{i_0}$, and set
$x_j=h_{ji_0}/x_{i_0}$.  For each $j\ne i_0$, choose a third vertex of
$S$;  combining the first identity in \eqref{eq:Ntriple}
with the second or third according to the position of the third vertex,
and cancelling the
nonzero edge entries gives
$h_{ji_0}^2=d_{i_0}d_j$.  Hence $x_j^2=d_j$, and for $i_0<j<k$,
$$
 h_{kj}=\frac{h_{ji_0}h_{ki_0}}{d_{i_0}}=x_kx_j.
$$
Thus $K=L_{\eps,S}(x)$.
\end{proof}

A collection of pairs
\begin{equation}\label{eq:crossmatch}
 M=\{(a_1,b_1),\ldots,(a_t,b_t)\}
\end{equation}
is called a \emph{matching} if {all its endpoints are distinct.  It is called
a \emph{crossing matching} if}, after numbering the pairs appropriately,
\begin{equation}\label{eq:crossorder}
 a_1<a_2<\cdots<a_t<b_1<b_2<\cdots<b_t.
\end{equation}
Thus every two intervals $[a_r,b_r]$ and $[a_s,b_s]$ cross.  We call $M$ parity-compatible if
\begin{equation}\label{eq:parcompat}
 p_{a_r}=p_{b_r}\qquad(1\le r\le t).
\end{equation}
Define
\begin{equation}\label{eq:crossK}
 C_M(c)=\sum_{r=1}^t c_r e_{b_r a_r},\qquad c_r\ne0\quad(1\le r\le t),
\end{equation}
with $C_\varnothing=0$.

\begin{proposition}\label{prop:strictlower}
An even strictly lower-triangular matrix solves \eqref{eq:TRE} if and only if it is $C_M(c)$ for a parity-compatible crossing matching $M$.
\end{proposition}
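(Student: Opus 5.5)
We prove the two implications separately and reduce each to the local statements already established, using the four-index locality principle (Corollary~\ref{cor:fourlocal}).

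\emph{Necessity.} Let $K$ be even, strictly lower triangular, and a solution. We first show that no two edges share an endpoint. Suppose two nonzero entries share a vertex, i.e.\ the support contains two edges on a triple $i<j<k$. Restrict to $\{i,j,k\}$ by Lemma~\ref{lem:locality}. Since $K$ is even, a nonzero entry forces equal parities of its endpoints, so all three indices have a common parity $\eps$. Proposition~\ref{prop:rank3hom} then applies, with $d_i=d_j=d_k=0$, and gives
$$
h_{ji}h_{ki}=0,\qquad h_{ji}h_{kj}=0,\qquad h_{ki}h_{kj}=0.
$$
Each of the three possible pairs of edges on the triple appears as one of these products, so no two edges meet. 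Hence the support is a matching $M$, and evenness makes it parity-compatible.

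For any two edges $(a,b)$ and $(c,d)$ of $M$, the principal block on their four distinct endpoints contains exactly these two entries. The other lower entries vanish by the matching property, and the diagonal is zero. Proposition~\ref{prop:rank4lower} therefore forces the intervals $[a,b]$ and $[c,d]$ to cross.

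It remains to show that a family of pairwise crossing intervals with distinct endpoints can be numbered so that \eqref{eq:crossorder} holds. Order the pairs by left endpoint, $a_1<\dots<a_t$. For $r<s$, crossing with $a_r<a_s$ means $a_r<a_s<b_r<b_s$. This gives $b_1<\dots<b_t$ and also $a_s<b_r$ for all $r<s$. Together with $a_r<b_r$, this yields $\max_s a_s=a_t<b_1=\min_r b_r$. So $M$ is a crossing matching and $K=C_M(c)$ with all $c_r\ne0$.

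\emph{Sufficiency.} By Corollary~\ref{cor:fourlocal}, it suffices to check every principal block $K_S$ with $|S|\le4$. Such a block of $C_M(c)$ consists of those entries $c_re_{b_ra_r}$ with both endpoints in $S$, so it contains zero, one, or two entries.
\begin{itemize}
\item[] The zero block is trivially a solution.
\item[] Two entries on four endpoints are a crossing pair with equal endpoint parities, which solve the equation by Proposition~\ref{prop:rank4lower}.
\item[] A single entry $c\,e_{ba}$ with $p_a=p_b$ and all other entries of the block zero requires more care.
\end{itemize}
For the single-entry case, Lemma~\ref{lem:zeroext} reduces the check to the two-index block on $\{a,b\}$. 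Condition \eqref{eq:same2} with $b=0$ in that lemma's notation (the upper entry) holds identically. So the zero extension of this block to $V$ solves the equation, and by Lemma~\ref{lem:locality} so does every principal block containing it. Every principal block of $C_M(c)$ is therefore a solution, and $C_M(c)$ solves \eqref{eq:TRE}.

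\emph{Main obstacle.} The only delicate point is the restriction to triples in the necessity argument. The closure relations of Proposition~\ref{prop:rank3hom} require equal parities on the triple, and this is exactly what evenness supplies whenever two edges share a vertex. The rest of the argument is bookkeeping with the crossing condition.
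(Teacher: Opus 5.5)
Your proof is correct and takes the paper's approach. The zero-diagonal three-index relations make the support a matching (you use the normalized form of Proposition~\ref{prop:rank3hom}, the paper uses \eqref{eq:cl}; they agree here because two edges sharing a vertex force equal parities), Proposition~\ref{prop:rank4lower} forces pairwise crossing, and Corollary~\ref{cor:fourlocal} gives the converse, with you additionally spelling out two steps the paper leaves implicit: that pairwise crossing intervals can be numbered as in \eqref{eq:crossorder}, and that a single-entry block solves the equation by Lemma~\ref{lem:twoall}(i) and zero extension.
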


\begin{proof}
By the four-index locality principle, it is enough to check the reduced reflection equation on the
endpoints involved.  On three indices $i<j<k$, two nonzero lower entries
sharing a vertex imply $p_i=p_j=p_k$.  With zero diagonal, the three-index
relations \eqref{eq:cl} reduce, up to nonzero scalar
factors, to
$$
k_{ji}k_{ki}=0,\qquad k_{ji}k_{kj}=0,\qquad k_{ki}k_{kj}=0.
$$
Hence no two support edges can share an endpoint, so the support is a
matching in the sense defined above.

For two support edges, the preceding three-index argument shows that their
endpoints are disjoint.  The principal block on these four endpoints therefore
contains exactly those two support entries.  Proposition~\ref{prop:rank4lower}
then excludes separated and nested intervals.  Hence every two support intervals cross.

Conversely, a principal submatrix of $C_M(c)$ on at most four indices contains at most two support edges, and if two occur then they cross.  Each of these local configurations solves the equation, so Corollary~\ref{cor:fourlocal} gives the result in arbitrary rank.
\end{proof}

\begin{theorem}\label{thm:lower}
An even lower-triangular matrix $K$ solves \eqref{eq:TRE} if and only if one of the following holds.
\begin{enumerate}[label=\textup{(L\arabic*)}]
\item All diagonal entries vanish and $K=C_M(c)$ is a parity-compatible crossing matching as in \eqref{eq:crossK}.
\item Some diagonal entry is nonzero.  Then there is a parity $\eps$ such
that every nonzero entry of $K$ has both indices in $I_\eps$ and, with
the induced order on $I_\eps$, $K$ is {either} a diagonal matrix, {or} a
two-point block \eqref{eq:two-block} with $B\ne0$ and
$(A,C)\ne(0,0)$, or a clique matrix \eqref{eq:dense} with all $x_i\ne0$.
\end{enumerate}
\end{theorem}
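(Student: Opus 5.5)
The proof splits according to whether the diagonal of $K$ vanishes, and both branches reduce to results already established.

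If every diagonal entry is zero, $K$ is an even strictly lower-triangular matrix. Proposition~\ref{prop:strictlower} then says directly that $K$ solves \eqref{eq:TRE} if and only if $K=C_M(c)$ for a parity-compatible crossing matching $M$. This is case (L1).

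Suppose instead that some $k_{rr}\ne0$, and set $\eps=p_r$. The last assertion of Lemma~\ref{lem:separator} gives $k_{ij}=0$ whenever $p_i=p_j\ne\eps$. Since $K$ is even, all mixed-parity entries vanish too. Hence the support of $K$ lies in $I_\eps\times I_\eps$.

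Lemma~\ref{lem:reduction} now shows that \eqref{eq:TRE} for $K$ is equivalent to the same equation for the principal block $K_{I_\eps}$ on $V_{I_\eps}$. This block carries the constant induced grading $\eps$ and the inherited order, it is lower triangular, and it has the nonzero diagonal entry $k_{rr}$. Proposition~\ref{prop:homdiag}, applied with $I_\eps$ relabelled as $\{1,\dots,|I_\eps|\}$ in increasing order, puts $K_{I_\eps}$ in one of the families (D1)--(D3). These are exactly the diagonal, two-point block and clique alternatives of (L2).

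One point needs checking here: the parity $\eps$ must not depend on the choice of $r$. This follows because Lemma~\ref{lem:separator}, or equivalently \eqref{eq:oppdiag}, forbids nonzero diagonal entries of opposite parities. Also, if the block is diagonal, the condition in (L2) is consistent, since all nonzero diagonals then have parity $\eps$.

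For the converse, a matrix of type (L1) is a solution by Proposition~\ref{prop:strictlower}. For a matrix of type (L2), its block on $I_\eps$ solves the homogeneous equation by the converse part of Proposition~\ref{prop:homdiag}, so $K$ itself is a solution by Lemma~\ref{lem:zeroext}, or equivalently by Lemma~\ref{lem:reduction}.

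There is no real obstacle; the one point needing care is the bookkeeping in passing to $I_\eps$. Proposition~\ref{prop:homdiag} is stated for $V$ with all parities equal, so I would note explicitly that the induced R-matrix on $V_{I_\eps}$ is the fundamental one for the constant grading with the inherited order. This is exactly what Lemma~\ref{lem:locality} provides.
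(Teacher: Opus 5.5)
Your proposal is correct and follows the paper's proof step for step. The zero-diagonal case is Proposition~\ref{prop:strictlower}; otherwise Lemma~\ref{lem:separator} and evenness put the support in $I_\eps\times I_\eps$, Lemma~\ref{lem:reduction} passes to $V_{I_\eps}$, and Proposition~\ref{prop:homdiag} gives the three families, while your explicit converse via Lemma~\ref{lem:zeroext} and your remark on the choice of $r$ are harmless details the paper leaves implicit.
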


\begin{proof}
The zero-diagonal case is Proposition~\ref{prop:strictlower}.  Otherwise choose $r$ with $k_{rr}\ne0$ and put $\eps=p_r$.  Lemma~\ref{lem:separator} eliminates every entry $k_{ij}$ with $p_i=p_j\ne\eps$, while Lemma~\ref{lem:reduction} reduces the reflection equation to the one for a lower-triangular matrix on $V_{I_\eps}$, whose retained basis vectors all have parity $\eps$.  Proposition~\ref{prop:homdiag} then gives the three families listed there.
\end{proof}

\subsection{Solutions with upper entries}
This is the branch in which
$\S_+\ne\varnothing$.
A collection
$$
 M=\{(a_1,b_1),\ldots,(a_t,b_t)\}
$$
is called a \emph{separated matching} if
\begin{equation}\label{eq:seporder}
 a_1<b_1<a_2<b_2<\cdots<a_t<b_t.
\end{equation}

\begin{lemma}\label{lem:isolation}
Let $K$ be a solution and suppose $k_{ab}\ne0$ with $a<b$.  Then $K$ is
even, $p_a=p_b=\eps$, and
\begin{equation}\label{eq:pairratio}
 k_{ba}=-q^{-\sigma_\eps}k_{ab},\qquad k_{aa}=k_{bb}=0.
\end{equation}
Moreover:
\begin{enumerate}[label=\textup{(\roman*)}]
\item every entry joining $a$ or $b$ to any third index is zero, and
a nonzero diagonal entry $k_{rr}$ can occur only when
$p_r\ne\eps$ and $r\notin I_{ab}$;
\item every lower-only off-diagonal entry on two other indices is zero;
\item every {other nonzero} off-diagonal entry belongs to a pair satisfying
\eqref{eq:pairratio}, {whose} endpoints {are} of parity $\eps$, and the pair
intervals are separated.
\end{enumerate}
\end{lemma}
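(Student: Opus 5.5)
The plan is to assemble the statement from the two-, three- and four-index results already established; no new residual computation should be needed. First, Lemma~\ref{lem:upperforces} gives evenness, since $\S_+\ne\varnothing$. Lemma~\ref{lem:twoall}(ii) rules out $p_a\ne p_b$, because a mixed two-index block forces $b=0$, i.e. $k_{ab}=0$. So $p_a=p_b=\eps$, and the equal-parity branch \eqref{eq:same2} applies. With $k_{ab}\ne0$, its relations $ab=0$ and $bd=0$ give $k_{aa}=k_{bb}=0$, and the third relation gives $k_{ba}=-q^{-s_a}k_{ab}=-q^{-\sigma_\eps}k_{ab}$. This is \eqref{eq:pairratio}.

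For (i), I would restrict to $\{a,b,c\}$ for an arbitrary third index $c$, using Lemma~\ref{lem:locality}. Proposition~\ref{prop:rank3upper} then kills $k_{ac},k_{ca},k_{bc},k_{cb}$. It also says that $k_{cc}\ne0$ is compatible with the pair only when $p_c\ne\eps$ and $c\notin I_{ab}$. Applying this to $c=r$ gives the diagonal statement; for $r\in\{a,b\}$ we already have $k_{rr}=0$.

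For (ii) and (iii), take an off-diagonal nonzero entry on indices $c<d$ with $\{c,d\}\cap\{a,b\}=\varnothing$; by (i) there are no other possibilities. Since $K$ is even, $p_c=p_d$. If $k_{cd}=0$ while $k_{dc}\ne0$, the entry is lower-only. Restricting to $\{a,b,c,d\}$, where all joining entries vanish by (i), Proposition~\ref{prop:rank4upper}(ii) makes this incompatible, which proves (ii). If instead $k_{cd}\ne0$, the first paragraph applied to $(c,d)$ shows that the pair satisfies \eqref{eq:pairratio} with its own parity $p_c$. Proposition~\ref{prop:rank4upper}(iii) excludes $p_c\ne\eps$, and then (i) of the same proposition forces $[a,b]$ and $[c,d]$ to be separated. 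A nonzero $k_{dc}$ with $k_{cd}\ne0$ is covered by the same pair, so every other off-diagonal entry belongs to such a pair, which proves (iii).

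The only subtle point is making sure that the four-index block really has the shape assumed in Proposition~\ref{prop:rank4upper}. The diagonal entries $k_{cc},k_{dd}$ might a priori also be present in that block. For a lower-only entry or a skew pair on $(c,d)$, however, the relevant normalized coefficient in the proposition's table multiplies only off-diagonal products. Moreover, for a skew pair on $(c,d)$, \eqref{eq:pairratio} already gives $k_{cc}=k_{dd}=0$. In the lower-only case, I would also note that the table's coefficient is unaffected by the diagonal entries on $c,d$, since the proof of Proposition~\ref{prop:rank4upper} restricts attention to the interaction term. I expect this bookkeeping to be the main (mild) obstacle; otherwise the lemma is a direct collation of earlier results.
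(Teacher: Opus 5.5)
Your proposal is correct and follows essentially the same route as the paper: evenness from Lemma~\ref{lem:upperforces}, the pair relations from the two-index equations, part (i) from Proposition~\ref{prop:rank3upper}, and parts (ii)--(iii) from Proposition~\ref{prop:rank4upper}. Your remark that, once the joining entries vanish, the relevant four-index coefficient reduces to a scalar multiple of $k_{ab}k_{dc}$ (so possible diagonal entries $k_{cc},k_{dd}$ do not interfere) is a valid detail that the paper leaves implicit.
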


\begin{proof}
Lemma~\ref{lem:upperforces} guarantees evenness, and Lemma~\ref{lem:two} gives
\eqref{eq:pairratio}.  Proposition~\ref{prop:rank3upper} implies (i).  On any
two {other} same-parity endpoints, Lemma~\ref{lem:twoall}(i)
says that the block is either lower-only or, if an upper entry is present, a
skew pair.  Proposition~\ref{prop:rank4upper}(ii) excludes every lower-only
case.  Parts (i) and (iii) of that proposition respectively force {another}
$\eps$-pair to be separated and exclude a pair of the opposite parity.  Mixed
endpoints are already excluded by evenness.  This proves (ii) and (iii).
\end{proof}

Fix $\eps\in\{0,1\}$.  Let $M$ be a nonempty separated matching whose endpoints all have parity $\eps$.  Let
\begin{equation}\label{eq:Dallowed}
 D\subseteq I_{1-\eps}\setminus\bigcup_{(a,b)\in M}I_{ab}.
\end{equation}
For nonzero parameters $z_{ab}$ and $y_r$ define
\begin{equation}\label{eq:upperfamily}
 P_{\eps;M,D}=
 \sum_{(a,b)\in M}z_{ab}
 \bigl(e_{ab}-q^{-\sigma_\eps}e_{ba}\bigr)
 +\sum_{r\in D}y_r e_{rr}.
\end{equation}

\begin{theorem}\label{thm:upper}
An even solution of \eqref{eq:TRE} satisfies
$\S_+\ne\varnothing$ if and only if it is a matrix
$P_{\eps;M,D}$ of the form \eqref{eq:upperfamily}.
\end{theorem}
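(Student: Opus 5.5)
The plan has two directions: a necessity argument that reads the support of $K$ off Lemma~\ref{lem:isolation}, and a sufficiency argument that checks $P_{\eps;M,D}$ on every principal block of size at most four and then applies Corollary~\ref{cor:fourlocal}.

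\emph{Necessity.} Let $K$ be an even solution with $\S_+\ne\varnothing$, and fix $(a,b)\in\S_+$. By Lemma~\ref{lem:isolation}, $p_a=p_b=\eps$, and $k_{ba}=-q^{-\sigma_\eps}k_{ab}$ and $k_{aa}=k_{bb}=0$.

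Now consider any other nonzero off-diagonal entry $k_{ij}$. By Lemma~\ref{lem:isolation}(i) its endpoints avoid $\{a,b\}$. By part (ii) it cannot be lower-only. By part (iii) it belongs to a skew pair on some $a'<b'$ with parity $\eps$, and $[a',b']$ is separated from $[a,b]$.

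Applying the same lemma with each such pair in the role of $(a,b)$, we get the following. The set $M$ of all pairs is a matching: distinct pairs share no endpoint by (i). Any two of its intervals are separated, so after ordering, \eqref{eq:seporder} holds. Each pair carries the ratio \eqref{eq:pairratio}, so the off-diagonal part is $\sum_{(a,b)\in M} z_{ab}(e_{ab}-q^{-\sigma_\eps}e_{ba})$ with $z_{ab}=k_{ab}\ne0$.

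For the diagonal, part (i), applied for every pair of $M$, shows that a nonzero $k_{rr}$ requires $p_r=1-\eps$ and $r\notin I_{ab}$ for every $(a,b)\in M$. The endpoints themselves have zero diagonal. So the marked set $D$ satisfies \eqref{eq:Dallowed}, and $K=P_{\eps;M,D}$.

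\emph{Sufficiency.} By Corollary~\ref{cor:fourlocal} it suffices to check every principal block $K_S$ with $|S|\le4$. Such a block of $P_{\eps;M,D}$ restricts to one of the following local configurations:
\begin{enumerate}[label=\textup{(\alph*)}]
\item diagonal entries only, which solve the equation by Lemma~\ref{lem:twoall}: same-parity diagonal blocks impose nothing when $b=0$, and two marked indices in $D$ have equal parity, so the mixed condition $c^2+\omega ad=0$ never arises between two marked vertices;
\item a single skew pair, possibly with one or two diagonal entries of $D$;
\item a single endpoint of a pair, with diagonals, which is again diagonal;
\item two separated $\eps$-pairs.
\end{enumerate}

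For configuration (b) I need to handle the subtle case of a skew pair with two diagonal entries $k_{cc},k_{ee}$, where $c,e\in D$ lie outside $I_{ab}$. Proposition~\ref{prop:rank3upper} covers each of $c$ and $e$ individually. For the full four-index block, I would expand the residual coefficients containing $k_{ab}k_{cc}$ and $k_{cc}k_{ee}$. The latter vanish because $c,e$ have equal parity and the block on $\{c,e\}$ is diagonal. Any mixed term needs a product of two entries that are both nonzero; every such product already appears in a three-index block, except products involving both $c$ and $e$ together with the pair. Since each residual coefficient is quadratic in the entries, any monomial uses at most two entries, hence at most two of the three supports (the pair, $k_{cc}$, $k_{ee}$). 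This reduces the check to the three-index sub-blocks, together with the observation that a coefficient indexed on four indices but built from entries on three indices vanishes by the argument of Lemma~\ref{lem:zeroext}.

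Configuration (d) is Proposition~\ref{prop:rank4upper}(i). A pair together with a single endpoint of another pair and a diagonal entry is handled by the same quadratic-monomial reduction.

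I expect this last step to be the main obstacle: justifying rigorously that four-index coefficients with entries supported on disjoint pieces reduce to the already verified two- and three-index cases. The Lemma~\ref{lem:zeroext}-type argument is the tool I would try first. If it is insufficient, I would fall back on a direct expansion of the few coefficients $\Fc^{ij}_{ab}$ in which indices from two different supports appear.
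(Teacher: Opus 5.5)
Your proposal is correct and follows essentially the paper's route: necessity is read off from Lemma~\ref{lem:isolation}, and sufficiency rests on the quadratic polarization of $\Fc$ together with zero extension (Lemma~\ref{lem:zeroext}), Proposition~\ref{prop:rank3upper}, and Proposition~\ref{prop:rank4upper}(i). The paper applies the identity $\Fc(\sum_\alpha X_\alpha)=\sum_\alpha\Fc(X_\alpha)+\sum_{\alpha<\beta}\bigl(\Fc(X_\alpha+X_\beta)-\Fc(X_\alpha)-\Fc(X_\beta)\bigr)$ globally to the pieces of $P_{\eps;M,D}$ (single skew pairs and single diagonal entries), which has three effects: it makes your preliminary reduction via Corollary~\ref{cor:fourlocal} unnecessary, it resolves the ``obstacle'' you flag, and it also handles diagonal-only blocks on three or four indices, which Lemma~\ref{lem:twoall} alone does not cover.
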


\begin{proof}
Necessity follows from Lemma~\ref{lem:isolation}.  Conversely, write
$P_{\eps;M,D}=\sum_\alpha X_\alpha$, where each $X_\alpha$ is either one
skew pair or one diagonal entry.  Since $\Fc$ is quadratic,
$$
 \Fc\!\left(\sum_\alpha X_\alpha\right)
 =\sum_\alpha \Fc(X_\alpha)
 +\sum_{\alpha<\beta}\bigl(\Fc(X_\alpha+X_\beta)
 -\Fc(X_\alpha)-\Fc(X_\beta)\bigr).
$$
Thus it is enough to check each piece and each sum of two pieces.  A single
piece solves the equation by the two-index calculation and zero extension;
two diagonal pieces solve it for the same reason, since their indices have the same parity $1-\eps$.  A skew pair together
with a diagonal piece is compatible by Proposition~\ref{prop:rank3upper},
using $p_r\ne\eps$ and $r\notin I_{ab}$ from \eqref{eq:Dallowed}.  Finally,
two skew pairs are compatible by Proposition~\ref{prop:rank4upper}(i),
since their intervals are separated and their endpoints have parity
$\eps$.  Hence every term in the displayed quadratic decomposition
vanishes, and so $\Fc(P_{\eps;M,D})=0$.
\end{proof}

\subsection{Non-even lower-triangular solutions}
\begin{lemma}\label{lem:activeclique}
Suppose a lower-triangular solution has a nonzero odd entry.
Then both endpoints of that entry have nonzero diagonal entries.  If the
active set contains more than two indices, it is a marked clique.
\end{lemma}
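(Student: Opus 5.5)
The first assertion is immediate from Lemma~\ref{lem:twoall}(ii). If $k_{ji}\ne0$ with $i<j$ and $p_i\ne p_j$, the principal block on $\{i,j\}$ has $c=k_{ji}\ne0$. The relation $c^2+\omega ad=0$ then forces $k_{ii}k_{jj}=-k_{ji}^2/\omega\ne0$. So both endpoints are marked, and $\{i,j\}$ is already a marked clique.

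For the second assertion I would grow a marked clique $S\supseteq\{i,j\}$ one vertex at a time, using only the triple relations \eqref{eq:cl} of Lemma~\ref{lem:allclosure}. These hold for every parity pattern and every ordering of a triple, with all $\al_r,\bt_r$ nonzero. Starting from $S=\{i,j\}$, let $v$ be any active vertex outside $S$. There are two cases.
\begin{itemize}
\item If $v$ is marked, pick $u,w\in S$. The edge $u$--$w$ is nonzero and the diagonal at $v$ is nonzero, so the conclusion of Lemma~\ref{lem:allclosure} makes $\{u,w,v\}$ a marked clique. In particular $v$ is joined to $u$ and $w$.
\item If $v$ is unmarked, it lies on some edge $v$--$x$. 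If $x\in S$, take another $w\in S$: the diagonal at $w$ is nonzero and the edge $v$--$x$ joins the other two vertices, so $\{w,v,x\}$ is a marked clique. This marks $v$, and we are back in the first case. If $x\notin S$, take any $w\in S$: the same argument applied to $\{w,v,x\}$ marks both $v$ and $x$.
\end{itemize}

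The inductive step is therefore: a marked vertex $v$ outside the current marked clique $S$ (with $|S|\ge2$) is adjacent to every $u\in S$. Given $u\in S$, choose a second $w\in S$. The diagonal $k_{vv}\ne0$ together with the edge $u$--$w$ forces $\{u,w,v\}$ to be a marked clique, so the edge $v$--$u$ is nonzero. Since $u$ was arbitrary, $S\cup\{v\}$ is again a marked clique. Induction on the number of active vertices then shows that the whole active set is a marked clique as soon as it has more than two elements.

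I expect the main obstacle to be making sure Lemma~\ref{lem:allclosure} really applies in the required generality. Its statement must cover all orderings of the triple: the marked vertex may be the smallest, middle or largest index. It must also cover mixed parity patterns, so that odd entries appear among the edges. All three monomial relations in \eqref{eq:cl} have nonzero coefficients in each row of the table. So each of the three possible positions of the marked vertex yields the clique conclusion: a product (diagonal)$\times$(opposite edge) being nonzero forces the two other edges to be nonzero, and the remaining relations then force the other two diagonals to be nonzero. I would check this case by case rather than rely on the homogeneous normalization of Lemma~\ref{lem:closure}, which assumed equal parities.
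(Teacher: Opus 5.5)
Your proof is correct and follows essentially the same route as the paper: Lemma~\ref{lem:twoall}(ii) marks both endpoints of the odd edge, and the marked clique is then grown one active vertex at a time using the triple closure of Lemma~\ref{lem:allclosure}, with an unmarked vertex first marked by means of a clique vertex distinct from its edge partner. The only cosmetic difference concerns a new marked vertex. You pair its diagonal with each edge inside the current clique, whereas the paper uses the new edges from that vertex together with the diagonals of the remaining clique vertices; both work because the clique is fully marked and fully joined.
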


\begin{proof}
Lemma~\ref{lem:twoall} marks both endpoints of an odd support edge, so
they form an initial marked clique $A$.  Suppose inductively that
$A$ is a marked clique and let $c$ be another active vertex.  If
$c$ is marked, apply Lemma~\ref{lem:allclosure} to $c$ and an edge of
$A$; this joins $c$ to both endpoints of that edge.  Applying the same rule
with the new edge from $c$ and each remaining marked vertex of $A$ joins
$c$ to all of $A$.

{If $c$ is an unmarked active vertex,} it lies on an edge $c-\!\!\!-d$.  Choose a marked vertex of
$A$ different from $d$ (possible because $A$ contains the two endpoints of
the original odd support edge).  Lemma~\ref{lem:allclosure} applied
to this marked vertex and the edge $c-\!\!\!-d$ marks $c$ and joins it to that
vertex.  The preceding paragraph then joins $c$ to all of $A$.  Repeating
the argument exhausts the active set.
\end{proof}

For $a<b$ with $p_a\ne p_b$ and $x_a,x_b\in\C^\times$ define {a} mixed
two-point block
\begin{equation}\label{eq:Qab}
 Q_{ab}(x_a,x_b)=
 x_a^2e_{aa}+\eta x_bx_a e_{ba}+x_b^2e_{bb}.
\end{equation}
The choice of sign in the middle entry is absorbed by changing the sign of either $x_a$ or $x_b$.

Now let $S$ be {a} set of at least three indices.  Suppose that all
indices in $S$ except one, denoted by $r$, have a common parity $\eps\ne p_r$.
We refer to $r$ as the \emph{minority index} and to $\eps$ as
the \emph{majority parity}.  For nonzero $x_i$, $i\in S$, define a one-minority clique matrix
\begin{align}
 M_{\eps,S,r}(x)
 ={}&\sum_{i\in S}x_i^2e_{ii}
 +\gm_\eps
   \sum_{\substack{j<i\\i,j\in S\setminus\{r\}}}
   x_ix_j e_{ij}\notag\\
 &+\eta\sum_{\substack{j\in S\\j<r}}x_rx_j e_{rj}
 +\sigma_\eps\eta\sum_{\substack{i\in S\\r<i}}x_ix_r e_{ir}.
 \label{eq:minoritydense}
\end{align}

\begin{lemma}\label{lem:minoritydense}
Every matrix \eqref{eq:minoritydense} solves \eqref{eq:TRE}.  Conversely,
let a lower-triangular solution have an active set $S$ with $|S|\ge3$
that is a marked clique with both parities present.  Then $S$ has a unique minority index and,
after a choice of the square root $\eta$ in \eqref{eq:eta}, the solution is
of the form \eqref{eq:minoritydense}.
\end{lemma}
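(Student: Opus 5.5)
Both directions will go through the four-index locality principle (Corollary~\ref{cor:fourlocal}).

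\emph{Sufficiency.} It is enough to check that every principal block of $M_{\eps,S,r}(x)$ on at most four indices solves the induced equation. By Lemma~\ref{lem:zeroext} only index sets $T\subseteq S$ matter, and by Lemma~\ref{lem:locality} the block on $T$ is again of the same shape. It is either a clique matrix $L_{\eps,T}$, or a one-minority clique matrix on $T$, or, when $|T|=2$, a block $Q_{ab}$ or a two-point block. The cases $|T|\le2$ follow from Lemma~\ref{lem:twoall}:
\begin{itemize}
\item in the mixed case $c^2=\eta^2x_a^2x_b^2=-\omega ad$, with the sign $\sigma_\eps$ harmless since only $c^2$ enters;
\item in the same-parity case $b=0$.
\end{itemize}
Same-parity triples and quadruples are covered by Proposition~\ref{prop:homdiag}. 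What remains is to verify the three-index equations for triples containing $r$, for each of the positions of $r$ and each value of $\eps$. The complete three-index equation is that of Lemma~\ref{lem:allclosure} together with the two-index relations; I take it as known that the residual reduces to the relations \eqref{eq:cl} with the tabulated coefficients. Then I substitute into \eqref{eq:cl}. For example, for the pattern $(\eps,\eps,1-\eps)$ the first relation reads $\al_1 x_i^2\eta x_kx_j=\bt_1\gm_\eps x_jx_i\,\eta x_kx_i$, which needs $\al_1=\bt_1\gm_\eps$. This matches the table: $(1+q)/q=\gm_0$. The quadruples containing $r$ still require a direct coefficient computation. I will organize it as in the proof of \eqref{eq:N4}: reduce modulo the triple relations and show that the single surviving coefficient is a multiple of a $\Delta$-type expression that vanishes for a product form $x_ix_j$ entries.

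\emph{Necessity.} Let $S$ be a marked clique containing both parities. Proposition~\ref{prop:22obsnew} applied to any four indices of $S$ with two of each parity gives a contradiction, because all six lower entries are nonzero. Hence one parity occurs at most once if $|S|\ge4$. For $|S|=3$ the minority is automatic, so the minority index $r$ is unique. Next, choose $x_r$ with $x_r^2=k_{rr}$ and fix a majority index $i_0$ with $x_{i_0}^2=k_{i_0i_0}$. For each majority index $j$, determine $x_j$ by making the odd entry joining $j$ and $r$ equal to $\eta x_jx_r$, multiplied by $\sigma_\eps$ when $j>r$. Lemma~\ref{lem:twoall}(ii) then gives $x_j^2=k_{jj}$ automatically, and choosing the sign of $\eta$ (equivalently of $x_r$) fixes the one remaining sign freedom. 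On the majority block, Proposition~\ref{prop:homdiag} yields a clique matrix $\gm_\eps y_iy_j$ with $y_j=\pm x_j$. The remaining task is to show that the signs are consistent, namely $k_{ij}=\gm_\eps x_ix_j$ for majority $i,j$. This comes from the triple $\{i,j,r\}$: the appropriate relation in \eqref{eq:cl} expresses $k_{\max\min}$ as the ratio of the products of the two odd entries and a diagonal entry. Computing it case by case according to the position of $r$ gives exactly $\gm_\eps x_ix_j$, with $\eta^2=-\omega$ and the table coefficients producing the factor $\gm_\eps$ and the sign $\sigma_\eps$.

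\emph{Main obstacle.} The hard part is the sign and coefficient bookkeeping in the triples that contain $r$, where each position of $r$ and each value of $\eps$ must reproduce the factor $\sigma_\eps$. This bookkeeping is also what the four-index verification in the sufficiency direction depends on.
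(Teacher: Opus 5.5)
\textbf{Necessity.} Your necessity direction is correct and is essentially the paper's argument. Proposition~\ref{prop:22obsnew} isolates the minority index. The odd entries define the $x_j$, and \eqref{eq:mixed2local} gives $x_j^2=k_{jj}$. On each triple $\{i,j,r\}$, the relation of \eqref{eq:cl} in which the majority entry is multiplied by $k_{rr}$ gives $k_{\max\min}=\gm_\eps x_ix_j$. The table confirms this in all six cases; for example, with $r$ in the middle one needs $-\al_2\sigma_\eps\omega=\bt_2\gm_\eps$, which holds.

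The detour through Proposition~\ref{prop:homdiag} is superfluous. For $|S|=3$ it is also wrong as stated: two majority indices give a block $B_{ab}$ whose lower entry that proposition leaves free.

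\textbf{Sufficiency: the three-index check.} The gap is in sufficiency. For triples containing $r$ you check only \eqref{eq:cl}. However, Lemma~\ref{lem:allclosure} merely says the residual \emph{contains} these relations; the paper proves completeness only for constant parity (Proposition~\ref{prop:rank3hom}). Completeness does hold here. The output indices of every term dominate the input indices, because $K$ is lower triangular and the transposed R-matrices only create diagonal pairs with larger indices. Hence a coefficient involving all of $i<j<k$ has input $(i,i)$, $\{i,j\}$ or $\{i,k\}$, and each such coefficient is zero or proportional to one relation in \eqref{eq:cl}. But this has to be shown, not assumed.

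\textbf{Sufficiency: the four-index check.} More seriously, the four-index blocks formed by $r$ and three majority indices are not verified at all. You only predict that, modulo the triple relations, a single $\Delta$-type coefficient survives and vanishes on product entries. The reduction to \eqref{eq:N4} was carried out only for constant parity. Moreover, whether a surviving combination vanishes on \eqref{eq:minoritydense} depends on its exact coefficients, which you have not computed. Corollary~\ref{cor:fourlocal} needs precisely these blocks, so the first claim of the lemma remains unproved.

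The paper supplies this computation by substituting \eqref{eq:minoritydense} directly, with $\eta$ kept as a free symbol. Nine coefficients remain, each of the form $c(q)\,x_r^2x_ix_j(\eta^2+\omega)$, and all of them vanish by \eqref{eq:eta}. You need to carry out this computation or an equivalent one.
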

\begin{proof}

By Proposition~\ref{prop:22obsnew}, a marked clique cannot contain two
disjoint pairs of indices of different parities. Since both parities are present, one of them
occurs exactly once at a vertex of $S$; call it $r$.

{Assume first that $K$ is such a lower-triangular solution.  Choose} $x_r$ with
$x_r^2=k_{rr}$.  If $i<r$ is a majority
index, define
$$
 x_i=\frac{k_{ri}}{\eta x_r},
$$
whereas for $r<i$ put
$$
 x_i=\frac{k_{ir}}{\sigma_\eps\eta x_r}.
$$
The mixed two-index relation {\eqref{eq:mixed2local}} and
$\eta^2=-\omega$ give $x_i^2=k_{ii}$.  Now take $r$ together with two
majority indices $i<j$.  In each of the three possible positions of $r$, {the
three-index residual contains a normalized coefficient of the form}
$$
 (q^2-1)x_r^2\bigl(k_{ji}-\gm_\eps x_jx_i\bigr)=0.
$$
Thus
$$
 k_{ji}=\gm_\eps x_jx_i
 \qquad(i<j,\quad i,j\ne r).
$$
This is exactly \eqref{eq:minoritydense}.

{It remains to verify the first statement of the lemma.  Consider} a principal
block of {a matrix \eqref{eq:minoritydense} of} size at most four.  Either all
of its indices have the majority parity $\eps$, or it contains the single
minority index.  Substitution in the two- and three-index residuals gives
zero using $\eta^2=-\omega$.  In the four-index case, with three
{majority} indices and one minority index, a direct
expansion of the residual leaves nine potentially nonzero coefficients before
the relation $\eta^2=-\omega$ is imposed.  Each has the form
$$
 c(q)x_r^2x_ix_j(\eta^2+\omega),
 \qquad
 c(q)\in\{\pm(1+q),\ \pm(1+q^{-1}),\ \pm(q+q^{-1})\},
$$
where the majority indices $i,j$ may coincide.  Hence all nine coefficients
vanish.   The four-index locality principle gives the full equation.
\end{proof}

The parametrizations $L_{\eps,S}(x)$, $Q_{ab}(x_a,x_b)$, and
$M_{\eps,S,r}(x)$ are unchanged by simultaneously replacing all their
$x$-parameters with their negatives.  For nonzero parameters this is the only
sign redundancy once $\eta$ is fixed.

\begin{theorem}\label{thm:noneven}
{A} solution $K$ of \eqref{eq:TRE} is non-even if and only if it
belongs to one of the following two lower-triangular families.
\begin{enumerate}[label=\textup{(N\arabic*)}]
\item A mixed two-point block $Q_{ab}(x_a,x_b)$ in
\eqref{eq:Qab}, with $p_a\ne p_b$.
\item A one-minority clique matrix $M_{\eps,S,r}(x)$ in
\eqref{eq:minoritydense}, with $|S|\ge3$.
\end{enumerate}
All displayed $x$-parameters are nonzero.
\end{theorem}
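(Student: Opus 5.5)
The plan is to prove the two directions separately, relying almost entirely on the local results already established.

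For sufficiency, both families solve \eqref{eq:TRE}, and both are non-even because every displayed parameter is nonzero. A block $Q_{ab}(x_a,x_b)$ is a $2\times2$ solution: its upper entry vanishes, and
$c^2+\omega ad=\eta^2x_a^2x_b^2+\omega x_a^2x_b^2=0$ by \eqref{eq:eta}, so Lemma~\ref{lem:twoall}(ii) applies. Lemma~\ref{lem:zeroext} then extends it by zero to $V$. Its entry $\eta x_bx_a$ at the mixed position $(b,a)$ is nonzero, so $Q_{ab}$ is not even. For $M_{\eps,S,r}(x)$, the first part of Lemma~\ref{lem:minoritydense} shows it is a solution. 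Its entries joining $r$ to the other indices of $S$ are nonzero odd entries, since $|S|\ge3$ guarantees at least one majority index.

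For necessity, let $K$ be a non-even solution.
\begin{enumerate}[label=\textup{(\arabic*)}]
\item By Lemma~\ref{lem:upperforces}, $\S_+=\varnothing$, so $K$ is lower triangular.
\item Pick a nonzero odd entry $k_{ba}$ with $a<b$ and $p_a\ne p_b$. Lemma~\ref{lem:activeclique} shows that $k_{aa},k_{bb}\ne0$.
\item If the active set is exactly $\{a,b\}$, the support lies in that principal block. Lemma~\ref{lem:twoall}(ii) gives $k_{ab}=0$ and $k_{ba}^2=-\omega k_{aa}k_{bb}$. Choose $x_a$ with $x_a^2=k_{aa}$, and put $x_b=k_{ba}/(\eta x_a)$. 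Then $x_b^2=k_{ba}^2/(\eta^2k_{aa})=k_{bb}$, so $K=Q_{ab}(x_a,x_b)$.
\item Otherwise the active set $S$ has $|S|\ge3$, and Lemma~\ref{lem:activeclique} makes it a marked clique. Both parities occur in $S$, namely at $a$ and $b$. The converse part of Lemma~\ref{lem:minoritydense} then gives a unique minority index $r$ and $K=M_{\eps,S,r}(x)$. Since $K$ vanishes outside the active set, there is nothing further to check.
\end{enumerate}

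The main obstacle is the correctness of the imported Lemma~\ref{lem:minoritydense}, which is not reproved here. Its four-index verification is a direct computation. Its uniqueness of the minority index uses Proposition~\ref{prop:22obsnew}: a marked clique with at least two indices of each parity contains four indices with a $2{+}2$ parity split, and all six of its lower pairs lie in the support, which that proposition forbids.

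Within the present argument, the one point needing care is that the families really are disjoint from the even list. This is immediate, because each contains a nonzero mixed-parity entry. Finally, all $x$-parameters are nonzero because every active vertex is marked.
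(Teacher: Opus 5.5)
Your proof is correct and follows the paper's argument essentially step for step. You use Lemma~\ref{lem:upperforces} for lower triangularity, then the two-index relation \eqref{eq:mixed2local} when only the endpoints of the odd entry are active, and otherwise Lemma~\ref{lem:activeclique} with Proposition~\ref{prop:22obsnew} and Lemma~\ref{lem:minoritydense}; the converse goes through $\eta^2=-\omega$, Lemma~\ref{lem:zeroext}, and Lemma~\ref{lem:minoritydense}, and your explicit check that both families are genuinely non-even (a nonzero mixed-parity entry, since $\eta\ne0$) fills in a detail the paper leaves implicit.
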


\begin{proof}
Let $K$ be non-even.  By Lemma~\ref{lem:upperforces} it has no upper entry, hence is lower triangular.  Choose a nonzero odd lower entry.  If its endpoints are the only active indices, the mixed two-index relation \eqref{eq:mixed2local} gives $Q_{ab}$.  Otherwise Lemma~\ref{lem:activeclique} shows that the active set is a marked clique; Proposition~\ref{prop:22obsnew} leaves a unique minority index, and Lemma~\ref{lem:minoritydense} gives the one-minority clique matrix.

Conversely, $Q_{ab}$ satisfies the mixed two-index relation because $\eta^2=-\omega$, and Lemma~\ref{lem:zeroext} embeds it into arbitrary rank.  The second family is covered by Lemma~\ref{lem:minoritydense}.
\end{proof}

We can summarize the results as follows.

\begin{theorem}\label{thm:structureall}
Let $K\in\End(V)$ solve \eqref{eq:TRE}.  Exactly one of the following holds.
\begin{enumerate}[label=\textup{(\Roman*)}]
\item
$\S_+\ne\varnothing$.  Then $K$ is automatically even and is a matrix $P_{\eps;M,D}$ from \eqref{eq:upperfamily}.  Its pairs form a nonempty separated matching whose endpoints lie in $I_\eps$, and each pair has the form
$$
 z(e_{ab}-q^{-\sigma_\eps}e_{ba}).
$$
The remaining entries are arbitrary $k_{rr}$ with
$p_r\ne\eps$ and $r\notin I_{ab}$ for every \mbox{$(a,b)\in M$}.
\item
$\S=\Lc$.  Then $K$ is lower triangular.  If $K$ is even, it is exactly one of the lower families in Theorem~\ref{thm:lower}: a parity-compatible crossing matching with zero diagonal, or a diagonal matrix, a two-point block, or a  clique matrix, with the latter three types supported on indices of one parity.  If $K$ is non-even, it is exactly one of the two families in Theorem~\ref{thm:noneven}: a mixed two-point block or a one-minority clique matrix.
\end{enumerate}
Every listed matrix is, conversely, a solution of \eqref{eq:TRE}.
\end{theorem}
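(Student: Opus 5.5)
The theorem is a bookkeeping assembly of the three branches already classified, so the plan is to split according to whether $\S_+$ is empty and then invoke the corresponding results. First observe that the two cases are mutually exclusive and exhaustive. Indeed $\S=\S_0\sqcup\S_+\sqcup\S_-$, so either $\S_+\ne\varnothing$, or $\S_+=\varnothing$, which means exactly $\S\subseteq\Lc$. In the latter case we read $\S=\Lc$ as the statement that $K$ is lower triangular, in the sense fixed in Section~\ref{sec:conv}.

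For case (I), suppose $\S_+\ne\varnothing$. Lemma~\ref{lem:upperforces} shows that $K$ is even. Theorem~\ref{thm:upper} then identifies $K$ with some $P_{\eps;M,D}$ as in \eqref{eq:upperfamily}. The description of the pairs follows from Lemma~\ref{lem:isolation}. Each upper entry $k_{ab}$ has $p_a=p_b=\eps$ and is accompanied by $k_{ba}=-q^{-\sigma_\eps}k_{ab}$, which gives the displayed form $z(e_{ab}-q^{-\sigma_\eps}e_{ba})$. All pairs have endpoints of the same parity $\eps$, since a pair of the opposite parity is excluded by Proposition~\ref{prop:rank4upper}(iii). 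The pairs are separated by Proposition~\ref{prop:rank4upper}(i). Finally, the diagonal entries satisfy the constraint \eqref{eq:Dallowed} by Proposition~\ref{prop:rank3upper}.

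For case (II), $K$ is lower triangular, and we split according to parity. If $K$ is even, Theorem~\ref{thm:lower} lists the possibilities. There is the strictly lower crossing matching (L1) when every diagonal entry vanishes. Otherwise (L2) applies, and $K$ is diagonal, a two-point block, or a clique matrix on $I_\eps$. These options are distinguished by the shape of the support, which is the sense in which Proposition~\ref{prop:homdiag} says ``exactly one''. If $K$ is non-even, Theorem~\ref{thm:noneven} gives $Q_{ab}$ or $M_{\eps,S,r}$, and the two are told apart by whether the active set has size two or at least three.

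The converse, that every listed matrix is a solution, is collected from the converse parts of the cited results. These are Theorem~\ref{thm:upper} for (I), Theorem~\ref{thm:lower} together with Proposition~\ref{prop:strictlower} and Proposition~\ref{prop:homdiag} for the even lower families, and Theorem~\ref{thm:noneven} together with Lemma~\ref{lem:minoritydense} for the non-even ones.

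The one point needing care is ``exactly one'' across the whole list, because the even and non-even families could in principle overlap. This is ruled out by parity. Matrices in (I) and in Theorem~\ref{thm:lower} are even. Every matrix in (N1) and (N2) has a nonzero odd entry: $\eta\ne0$ and all $x$-parameters are nonzero, so the off-diagonal entries joining indices of opposite parity do not vanish. Hence the non-even families are disjoint from the even ones, and the case split is clean. I expect no real obstacle beyond checking that the degenerate overlaps are excluded by the nonvanishing conventions. For instance, a two-point block with $B=0$ would reduce to a diagonal matrix, but (D2) requires $B\ne0$.
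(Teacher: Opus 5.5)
Your proposal is correct and is essentially the paper's own argument: the paper presents Theorem~\ref{thm:structureall} as a summary obtained by splitting on whether $\S_+$ is empty and invoking Lemma~\ref{lem:upperforces}, Theorem~\ref{thm:upper} (via Lemma~\ref{lem:isolation}), Theorem~\ref{thm:lower} and Theorem~\ref{thm:noneven}, with the converse taken from the converse parts of those results. You also note that every non-even family contains a nonzero odd entry, since $\eta\ne0$ and all $x$-parameters are nonzero; this makes explicit the disjointness behind ``exactly one'', which the paper leaves implicit.
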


\section{Invertible solutions}\label{sec:invnew}

Let us compute the ranks of the obtained K-matrices.

\begin{lemma}\label{lem:familyranks}
With the parameter restrictions imposed above,
$$
 \operatorname{rank}C_M=|M|,\qquad
 \operatorname{rank}L_{\eps,S}=|S|,\qquad
 \operatorname{rank}P_{\eps;M,D}=2|M|+|D|,
$$
and
$$
 \operatorname{rank}B_{ab}(A,B,C)=
 \begin{cases}
 2,&AC\ne0,\\
 1,&AC=0,
 \end{cases}
 \qquad
 \operatorname{rank}Q_{ab}=2,
 \qquad
 \operatorname{rank}M_{\eps,S,r}=|S|.
$$
\end{lemma}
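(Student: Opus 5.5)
The plan is to compute each rank directly from the explicit form of the family, reducing every case to a familiar matrix type: a permutation-like matrix, a direct sum of $2\times2$ blocks, or a triangular matrix with nonzero diagonal. Rank is unchanged by deleting zero rows and columns, so in each case only the principal block on the active set matters.

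\emph{Crossing matching.} For $C_M(c)$ the endpoints of the pairs are distinct. The nonzero entries $c_re_{b_ra_r}$ therefore occupy distinct rows $b_r$ and distinct columns $a_r$. The matrix is thus a partial monomial matrix, and its rank is $|M|$.

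\emph{Separated matching with diagonal.} For $P_{\eps;M,D}$ the supports are pairwise disjoint: the $2\times2$ blocks on $\{a,b\}$ for $(a,b)\in M$, and the singletons $r\in D$. Here $D\subseteq I_{1-\eps}$, while the endpoints of $M$ lie in $I_\eps$, so $D$ is disjoint from those endpoints. After a permutation of basis vectors the matrix is block diagonal. Each pair block is
$$
z\begin{pmatrix}0&1\\-q^{-\sigma_\eps}&0\end{pmatrix},
$$
whose determinant $q^{-\sigma_\eps}z^2$ is nonzero, and each $y_r\ne0$. The rank is therefore $2|M|+|D|$.

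\emph{Two-point blocks.} The block $B_{ab}$ is $\begin{pmatrix}A&0\\B&C\end{pmatrix}$ with $B\ne0$. Its determinant is $AC$, and the matrix is nonzero, which gives the stated case split. The block $Q_{ab}$ is lower triangular with diagonal entries $x_a^2x_b^2\ne0$, so it has rank $2$.

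\emph{Clique matrices.} The matrices $L_{\eps,S}$ and $M_{\eps,S,r}$ are lower triangular on $S$ with diagonal entries $x_i^2\ne0$, so each has rank $|S|$. One must check that the definition \eqref{eq:dense} indexes its off-diagonal sum by $e_{ij}$ with $j<i$, i.e.\ strictly lower entries, consistent with Proposition~\ref{prop:homdiag}. The same holds for \eqref{eq:minoritydense}, where $e_{rj}$ with $j<r$ and $e_{ir}$ with $r<i$ are lower. Triangularity then gives the claim at once.

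No step is a real obstacle. The only points needing care are the disjointness of $D$ from the pair endpoints, which is a parity argument, and confirming that the clique families are genuinely triangular.
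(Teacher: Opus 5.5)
Your proposal is correct and follows essentially the same route as the paper: $C_M$ is a partial monomial matrix, $P_{\eps;M,D}$ is a direct sum of nonsingular skew-pair blocks and nonzero scalars, $\det B_{ab}=AC$ with $B\ne0$ giving rank one otherwise, and $L_{\eps,S}$, $Q_{ab}$, $M_{\eps,S,r}$ are lower triangular with nonzero diagonal on their active indices. The only slip is cosmetic: the diagonal entries of $Q_{ab}$ are $x_a^2$ and $x_b^2$, whereas $x_a^2x_b^2$ is its determinant.
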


\begin{proof}
The matrices $L_{\eps,S}$, $Q_{ab}$, and $M_{\eps,S,r}$ are lower
triangular with nonzero diagonal on their active indices.  The matrix
$P_{\eps;M,D}$ is a direct sum of nonsingular skew-pair blocks and
nonzero scalar blocks, while the nonzero entries of $C_M$ occupy distinct
rows and columns.  Finally, $\det B_{ab}=AC$; if $AC=0$, the assumptions
$B\ne0$ and $(A,C)\ne(0,0)$ give rank one.
\end{proof}

For $\eps\in\{0,1\}$ let $\Ru_\eps$ be the {set of maximal} intervals
$[a,b]$ contained in $I_\eps$.  We call these intervals \emph{runs} and take the
length of $[a,b]$ to be $b-a+1$.  Write
$$
 E_\eps:\quad \text{every run in }\Ru_\eps\text{ has even length}.
$$

\begin{theorem}\label{thm:globalinv}
Let $K$ be an invertible solution of \eqref{eq:TRE}.
\begin{enumerate}[label=\textup{(\Alph*)}]
\item If $p_1=\cdots=p_d=\eps$, then:
\begin{itemize}
\item for $d=1$, $K$ is an arbitrary nonzero scalar;
\item for $d=2$, either $K=\left(\begin{smallmatrix}A&0\\B&C\end{smallmatrix}\right)$ with $AC\ne0$, or $K=z\left(\begin{smallmatrix}0&1\\-q^{-\sigma_\eps}&0\end{smallmatrix}\right)$;
\item for $d\ge3$, $K$ is an arbitrary invertible diagonal matrix, the clique matrix $L_{\eps,\{1,\ldots,d\}}(x)$ supported on all indices, or, when $d$ is even, the perfect skew matching
$$
 \sum_{r=1}^{d/2}z_r(e_{2r-1,2r}-q^{-\sigma_\eps}e_{2r,2r-1}).
$$
\end{itemize}
In part (A), $B$ is arbitrary in the lower $2\times2$ family, whereas $z$,
all $z_r$, and all $x_i$ in $L_{\eps,\{1,\ldots,d\}}(x)$ are nonzero.

\item Suppose the grading is mixed.  The invertible even family pairing the
indices in $I_\eps$ exists exactly when $E_\eps$ holds, and is
\begin{equation}\label{eq:invmixed}
 K^{(\eps)}=
 \sum_{p_i\ne\eps}y_i e_{ii}
 +\sum_{[a,b]\in\Ru_\eps}
 \sum_{r=0}^{(b-a-1)/2}z_{a+2r}
 \bigl(e_{a+2r,a+2r+1}-q^{-\sigma_\eps}e_{a+2r+1,a+2r}\bigr),
\end{equation}
with all $y$- and $z$-parameters distinct from zero.

\item The non-even invertible solutions occur only in mixed grading: for
$d=2$ there is $Q_{12}$, while for $d\ge3$ they exist exactly when one of
$I_0,I_1$ has cardinality one.  If $r$ is its unique element and $\eps$ is the common parity of all remaining indices,
then $K=M_{\eps,\{1,\ldots,d\},r}$.
\end{enumerate}
There are no other invertible solutions.
\end{theorem}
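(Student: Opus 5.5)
The plan is to run through the list of Theorem~\ref{thm:structureall} and use Lemma~\ref{lem:familyranks} to decide which members have rank $d$. A solution is invertible exactly when its rank equals $d$, which forces its active set to be all of $I$. Each branch is then handled separately.

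\emph{Branch (I), $\S_+\ne\varnothing$.} Here $K=P_{\eps;M,D}$ has rank $2|M|+|D|$. Invertibility therefore means that the endpoints of $M$ together with $D$ exhaust $I$. The endpoints of $M$ lie in $I_\eps$ and $D\subseteq I_{1-\eps}$, so $M$ must cover all of $I_\eps$ and $D=I_{1-\eps}$. The constraint $D\cap I_{ab}=\varnothing$ then says that no index of parity $1-\eps$ lies strictly between the endpoints of a pair. Since the pairs are separated, an interval $[a,b]$ with $(a,b)\in M$ contains no index of $I_\eps$ other than $a,b$. Hence $I_{ab}=\varnothing$, i.e.\ $b=a+1$. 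So the pairs are consecutive same-parity indices tiling $I_\eps$, with each pair inside one run. A run is tiled by consecutive pairs exactly when it has even length, and then the tiling is unique; this gives $E_\eps$ and \eqref{eq:invmixed}. In the homogeneous case $I_{1-\eps}=\varnothing$, so $D=\varnothing$ and we obtain the perfect skew matching for even $d$. For $d=2$ this is the single skew pair.

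\emph{Branch (II), even lower-triangular.} A matrix $C_M$ has rank $|M|\le d/2<d$, so it is never invertible. A matrix $B_{ab}$ with $AC\ne0$ has active set $\{a,b\}$, so it is invertible only if $d=2$ and the grading is homogeneous; this is the lower $2\times2$ family. For a clique matrix we need $S=I$, and since $S\subseteq I_\eps$ the grading must be homogeneous with $d\ge3$. A diagonal matrix is invertible when all $k_{ii}\ne0$, but Theorem~\ref{thm:lower}(L2) places its support in a single $I_\eps$. So this option also requires homogeneous grading, and there the diagonal matrix is arbitrary invertible. Note that for $d=2$ the diagonal case is already included in the $2\times2$ lower family with $B=0$. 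For $d=1$ every nonzero scalar is a solution. This completes (A), and it also shows that in mixed grading the only even invertible solutions are the $K^{(\eps)}$ of (B).

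\emph{Non-even branch.} A mixed block $Q_{ab}$ has rank $2$, so it is invertible only when $d=2$, and then $p_1\ne p_2$. A one-minority clique matrix $M_{\eps,S,r}$ has rank $|S|$, so it is invertible iff $S=I$. Since $S$ has exactly one index of parity $p_r$, this happens iff one of $I_0,I_1$ is the singleton $\{r\}$, which gives (C). Exhaustiveness of the whole list follows from Theorem~\ref{thm:structureall}.

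I expect the main obstacle to be the tiling argument in branch (I). Its key step is showing that separatedness plus $D=I_{1-\eps}$ forces $b=a+1$, rather than merely forcing $I_{ab}$ to avoid the other parity. To handle it, I would argue from the order \eqref{eq:seporder}: between $a_r$ and $b_r$ there is no endpoint of any pair. Every index of $I_\eps$ is an endpoint, so $I_{a_rb_r}\cap I_\eps=\varnothing$. Every index of $I_{1-\eps}$ lies in $D$, so $I_{a_rb_r}\cap I_{1-\eps}=\varnothing$ as well. Together these give $I_{a_rb_r}=\varnothing$.
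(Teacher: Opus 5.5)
Your proposal is correct and follows the paper's proof. You apply Theorem~\ref{thm:structureall} with the ranks from Lemma~\ref{lem:familyranks}, discard every family whose rank is below $d$, and for $P_{\eps;M,D}$ use $D=I_{1-\eps}$ together with separatedness to show that each pair consists of two adjacent indices inside a run of $I_\eps$, which yields $E_\eps$; your argument that $I_{a_rb_r}$ meets neither $I_\eps$ nor $I_{1-\eps}$ just spells out the paper's one-line remark that every pair lies in a run and all of $I_\eps$ must be paired.
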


\begin{proof}
Apply Theorem~\ref{thm:structureall} and Lemma~\ref{lem:familyranks}.  When all basis vectors have parity $\eps$, rank $d$ leaves the full diagonal and clique families,
together with the arbitrary lower $2\times2$ block; an upper matching is
invertible only when it is the adjacent perfect matching.  For a mixed even
solution the lower branch is singular, so an invertible matrix must come from
the family $P_{\eps;M,D}$.  Every pair is then contained in a
run contained in $I_\eps$ and all indices in $I_\eps$ must be paired.  Thus each
such run has even length and is paired adjacently, which gives
\eqref{eq:invmixed}.  The ranks of $Q_{ab}$ and $M_{\eps,S,r}$ give the
non-even cases.
\end{proof}

The Satake interpretation of the  even families is discussed in
Subsection~\ref{sec:satakelower}.

\subsection{Satake data for even invertible solutions}\label{sec:satakelower}
We first indicate a relation between invertible even K-matrices and graded Satake diagrams.
We will not turn to the theory of  coideal subalgebras in quantum groups and confine ourselves to a  classical consideration. In the classical limit $q\to 1$,
the matrix $K$ tends to an even matrix $K_0$ that  is fixed by a subalgebra $\mathfrak{k}$ in the general linear Lie superalgebra $\mathfrak{g}$,
with respect to the ``twisted'' action:
\[
 \rho(x) K_0+K_0\rho(x)^t=0\qquad(x\in\mathfrak{k}),
\]
where $\rho$ is the natural representation of $\mathfrak{g}$ on $V$ restricted to $\mathfrak{k}$.
Such $\mathfrak{k}$ is associated with a Satake diagram in the standard way \cite{RegelskisVlaar}.

In the case of constant parity sequence $I=I_\epsilon$, K-matrices are known
to correspond to two diagrams \cite{NS}; see also \cite[Section~3.1.2]{AMSsatake}:
\begin{equation}\label{Satake-diagrams}
 \begin{picture}(45,10)
\put(2,3){\circle{3}}
\put(3,3){\line(1,0){5}}
\put(11,0){$\cdots$}
\put(26,3){\line(1,0){5}}
\put(32,3){\circle{3}}
 \end{picture}
\quad\quad
 \begin{picture}(45,10)
\put(2,3){\circle*{3}}
\put(3,3){\line(1,0){5}}
\put(11,0){$\cdots$}
\put(26,3){\line(1,0){5}}
\put(32,3){\circle{3}}
\put(34,3){\line(1,0){7}}
\put(42,3){\circle*{3}}
  \end{picture}
\end{equation}
where the graph on the right comprises an odd number of nodes of alternating colour with black nodes at the ends.
Diagonal $K$ are associated with the diagram on the left while the diagram on the right gives rise to
 even dimensional $K$.
In the classical limit $q\to 1$, such a $K$-matrix tends to a block-diagonal matrix
\begin{equation}
K_0=\sum_{i=1}^{d/2} z_{2i-1}(e_{2i-1,2i}-e_{2i,2i-1}),
\label{classical2by2}
\end{equation}
while a diagonal $K$ tends to diagonal.

In the case of mixed grading, there is a family of Satake diagrams that consist of
alternating even blocks of  types (\ref{Satake-diagrams}) separated by odd roots \cite[Section~2.2]{AMSsatake}.
The block of white roots can be empty. Empty even blocks can appear both at the ends and in the middle of the diagram.
The black-and-white subdiagrams correspond to subspaces of even dimension where the matrix $K_0$ restricts to
(\ref{classical2by2}). On the complementary subspace $K_0$  is diagonal.

Next we relate the Satake data
with
the lower-triangular even families.
Assume that $p_1=\cdots=p_d=\eps$, so every simple root
$\alpha_i=\zeta_i-\zeta_{i+1}$ is even.  Consider the all-white  diagram on the left
in
(\ref{Satake-diagrams}).
Since the black subdiagram is empty and $\tau(\alpha_i)=\alpha_i$, the additional
Cartan mixture term allowed in \cite{AMSsatake} may occur at every
node.  With $u_{\alpha_i}=h_{\alpha_i}$, the classical generators are
\begin{equation}\label{eq:satakegen}
 x_{\alpha_i}=e_{\alpha_i}+c_i f_{\alpha_i}
 +\grave{c}_i h_{\alpha_i},\qquad 1\le i<d.
\end{equation}
Here  $c_i$ and $\grave{c}_i$ are  two complex mixture parameters.
Let $\mathfrak{k}\subset \mathfrak{g}=\mathfrak{sl}_d$ be the Lie subalgebra
generated by the elements in \eqref{eq:satakegen}.

Put
\begin{equation}\label{eq:Lclassical}
 L_0(x)=\lim_{q\to1}L_{\eps,\{1,\ldots,d\}}(x)
 =\sum_{i=1}^d x_i^2e_{ii}
 +2\sum_{1\le j<i\le d}x_ix_j e_{ij}.
\end{equation}
For $r_i=x_{i+1}/x_i$, set
\begin{equation}\label{eq:satakemixL}
 \grave{c}_i=-r_i,\qquad
 c_i=-\sigma_\eps r_i^2,
 \qquad 1\le i<d.
\end{equation}
Thus $c_i=-\sigma_\eps\grave{c}_i^{\,2}$.  In the normalization used in
\cite[Section~4.2]{AMSsatake}, the natural representation is
\begin{equation}\label{eq:satakenormalization}
 \rho(e_{\alpha_i})=\sigma_\eps e_{i,i+1},\qquad
 \rho(f_{\alpha_i})=e_{i+1,i},\qquad
 \rho(h_{\alpha_i})=\sigma_\eps(e_{ii}-e_{i+1,i+1}).
\end{equation}
This convention comes from the invariant form: for two consecutive weights of
parity $\eps$, one has $\alpha_i(h_{\alpha_i})=2\sigma_\eps$.
Consequently
\begin{equation}\label{eq:satakematrixL}
 \rho(x_{\alpha_i})=\sigma_\eps
 \bigl(-r_i e_{ii}+e_{i,i+1}-r_i^2e_{i+1,i}
       +r_i e_{i+1,i+1}\bigr).
\end{equation}
 The nonzero common factor  $\sigma_\eps$ in
\eqref{eq:satakematrixL} can be canceled.
Indeed, if $u_i=e_i+r_i e_{i+1}$ and $v_i=-r_i e_i+e_{i+1}$, then $\rho(x_{\alpha_i})=\sigma_\eps u_i v_i^t$, while $L_0v_i=-x_ix_{i+1}u_i$ and $v_i^tL_0=x_ix_{i+1}u_i^t$; hence $\rho(x_{\alpha_i})L_0+L_0\rho(x_{\alpha_i})^t=0$.  Thus $L_0(x)$ is a twisted invariant of $\mathfrak{k}$ for the
mixture parameters \eqref{eq:satakemixL}.
The subalgebra $\mathfrak{k}$ is isomorphic to the electrical Lie algebra of type~A \cite{LamPylyavskyy}, realized as $\mathfrak{sl}_d^{(\mathbf a)}$ in \cite{BGG}, with
$a_i=-r_i=-x_{i+1}/x_i$.

The other lower-triangular invertible families have the same all-white Satake
origin at the classical level.  For an invertible diagonal matrix
$D=\sum_i d_i e_{ii}$, take $\grave{c}_i=0$ and
\begin{equation}\label{eq:satakemixdiag}
 c_i=-\sigma_\eps\frac{d_{i+1}}{d_i}.
\end{equation}
For $d=2$, an arbitrary invertible lower-triangular matrix
$\left(\begin{smallmatrix}A&0\\ B&C\end{smallmatrix}\right)$ has the one-node
all-white data
\begin{equation}\label{eq:satakemix2}
 \grave{c}_1=-\frac{B}{2A},\qquad
 c_1=-\sigma_\eps\frac{C}{A}.
\end{equation}
For $A=x_1^2$, $B=2x_1x_2$, and $C=x_2^2$, these values agree with
\eqref{eq:satakemixL}.  When $d=1$ there are no simple roots in $\mathfrak{g}$, so the twisted
invariance condition is vacuous and the nonzero scalar family corresponds to
the empty diagram.

Thus every invertible even family in Theorem~\ref{thm:globalinv} is
accounted for by a type A graded Satake diagram.
The paired families are associated with the shaft diagrams (in the classification of \cite[Section~3.1.2]{AMSsatake}); for constant parity, the classical limits of
the lower-triangular families are twisted invariants of the all-white shaft
diagram \cite{AMSsatake}.

We revisit the electrical Lie algebra appearing above as the even
part of the classical stabilizer of the full-support non-even family.
We turn to this next.

\section{Classical stabilizer of the non-even family}\label{sec:superelectrical}

\subsection{Classical limit and the alternating form}
We now turn to the full-support non-even family.  Let
$K=M_{\eps,\{1,\ldots,d\},r}(x)$, so $r$ is the unique minority index, and put
$n=d-1$.  Write the majority indices as $i_1<\cdots<i_n$, identify
$U=\operatorname{span}\{v_{i_1},\ldots,v_{i_n}\}$ with $\C^n$ in this order,
and set
\[
 y_a=x_{i_a},\qquad
 \mathbf{y}=(y_1,\ldots,y_n)^t,\qquad c=x_r^2,\qquad
 L=\sum_{a=1}^n y_a^2e_{aa}
   +2\sum_{1\le b<a\le n}y_ay_be_{ab}.
\]
Thus $L$ is the classical limit of the homogeneous even clique  matrix on the
majority subspace $U$.  Since $\eta^2=-(q-q^{-1})$, one has $\eta\to0$ as
$q\to1$.  Hence, after grouping the two parity subspaces, the classical limit
$M_0=\lim_{q\to1}K$ is
\begin{equation}\label{eq:Mclassicalblocks}
 M_0=
 \begin{cases}
  \left(\begin{smallmatrix}L&0\\0&c\end{smallmatrix}\right),
    &\eps=0,\quad V=U_{\bar0}\oplus\C_{\bar1}v_r,\\[4pt]
  \left(\begin{smallmatrix}c&0\\0&L\end{smallmatrix}\right),
    &\eps=1,\quad V=\C_{\bar0}v_r\oplus U_{\bar1}.
 \end{cases}
\end{equation}

Put
\[
 W=\ker(\mathbf{y}^t)\subset\C^n,
 \qquad
 \mathbf a=(a_1,\ldots,a_{n-1}),
 \qquad
 a_i=-\frac{y_{i+1}}{y_i}.
\]
Introduce a bilinear form on $W$ by
\[
 \Omega(z,w)=z^tLw,
 \qquad
 \widehat\Omega=-\sigma_\eps c^{-1}\Omega.
\]

\begin{lemma}\label{lem:Momega}
The form $\Omega$ is alternating on $W$.  If $n$ is odd, it is
nondegenerate.  If $n$ is even, it has a one-dimensional kernel generated by
\[
 r_0=\bigl((-1)^{i-1}y_i^{-1}\bigr)_{i=1}^n.
\]
\end{lemma}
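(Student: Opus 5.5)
The plan is to split $L$ into a part that vanishes on $W$ and an antisymmetric part. Let $\mathbf y\mathbf y^t$ be the rank-one matrix with entries $y_ay_b$. Let $N$ be the matrix with entries $N_{ab}=y_ay_b$ for $b<a$, $N_{ab}=-y_ay_b$ for $a<b$, and $N_{aa}=0$. A direct comparison of entries gives
$$
 L=\mathbf y\mathbf y^t+N .
$$
Indeed, the two terms add to $y_a^2$ on the diagonal and to $2y_ay_b$ below it, and they cancel above it.

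\textbf{Alternating.} For $z,w\in W=\ker(\mathbf y^t)$ we have $z^t\mathbf y\mathbf y^tw=0$, so $\Omega(z,w)=z^tNw$. Since $N^t=-N$, the form $\Omega$ is alternating on $W$. This step is immediate.

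\textbf{Change of coordinates.} To find the radical, set $u_a=y_az_a$ for $z\in\C^n$. Then $W$ becomes the hyperplane $\sum_a u_a=0$. The conjugated form $\operatorname{diag}(\mathbf y)^{-1}N\operatorname{diag}(\mathbf y)^{-1}$ is the sign matrix $S$ with $S_{ab}=\operatorname{sign}(a-b)$. Because $W$ is the annihilator of $\mathbf y$ under the standard pairing, a vector $w\in W$ lies in the kernel of $\Omega|_W$ exactly when $Nw\in\C\mathbf y$. In the $u$-coordinates this reads
$$
 Su=\lambda(1,\ldots,1)^t\quad\text{for some }\lambda\in\C,\qquad \textstyle\sum_a u_a=0 .
$$

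\textbf{Solving the kernel equation.} Put $P_a=u_1+\cdots+u_a$, with $P_0=0$; the constraint $u\in W$ becomes $P_n=0$. Using this, I expect
$$
 (Su)_a=\sum_{b<a}u_b-\sum_{b>a}u_b=P_{a-1}+P_a .
$$
The kernel condition therefore becomes the recursion $P_{a-1}+P_a=\lambda$ for $1\le a\le n$, with $P_0=P_n=0$. It forces $P_a=\lambda$ for odd $a$ and $P_a=0$ for even $a$.

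\begin{itemize}
\item If $n$ is odd, the condition $P_n=0$ forces $\lambda=0$. Then all $P_a=0$, so $u=0$ and $\Omega$ is nondegenerate on $W$.
\item If $n$ is even, $\lambda$ is free. Then $u_a=P_a-P_{a-1}=\lambda(-1)^{a-1}$, hence $z_a=\lambda(-1)^{a-1}y_a^{-1}$, so the kernel is spanned by $r_0$.
\end{itemize}

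One also checks that $r_0\in W$, since $\mathbf y^tr_0=\sum_a(-1)^{a-1}$ vanishes for even $n$. As a consistency check, the dimension count fits: $\dim W=n-1$ is even when $n$ is odd and odd when $n$ is even, which matches a zero and a one-dimensional kernel respectively.

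The only step needing care is the identity for $(Su)_a$ in terms of partial sums. The rest is bookkeeping, and the sign conventions do not affect the conclusion, since the same argument also gives the statement for $\widehat\Omega$.
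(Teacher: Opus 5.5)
Your proof is correct and follows the paper's route: the splitting $L=\mathbf y\mathbf y^t+N$ is the paper's observation $L+L^t=2\mathbf y\mathbf y^t$, and the rescaling $u=Dz$ turns $\Omega|_W$ into the sign matrix $(\operatorname{sgn}(a-b))$ restricted to $\mathbf 1^\perp$, as in the paper. The only difference is the final step, where your partial-sum recursion $P_{a-1}+P_a=\lambda$ with $P_0=P_n=0$ computes the radical directly and produces $r_0$ explicitly, in place of the paper's unspecified induction on $[n/2]$.
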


\begin{proof}
Since $L+L^t=2\mathbf{y}\mathbf{y}^t$, the restriction of $\Omega$ to $W$
is alternating.  Put $D=\operatorname{diag}(y_1,\ldots,y_n)$.  With the change of variables $\widetilde z=Dz$ and $\widetilde w=Dw$, the form becomes the restriction to
$\mathbf{1}^{\perp}$ of the skew matrix
$J=(\operatorname{sgn}(i-j))_{i,j=1}^n$.  An elementary induction on $m=[n/2]$ gives
rank $n-1$ on this subspace when $n$ is odd and rank $n-2$ when $n$ is
even.  In the latter case the displayed vector generates the kernel in the
original coordinates.
\end{proof}

\subsection{The classical stabilizer}
Denote
\[
 \mathfrak{k}(M_0)=\{X\in\mathfrak{gl}(V):XM_0+M_0X^t=0\}.
\]
Since $M_0$ is even, $\mathfrak{k}(M_0)$ is a Lie
superalgebra.

\begin{lemma}\label{lem:Mstabeven}
The even part of $\mathfrak{k}(M_0)$ acts trivially on $v_r$, and
\begin{equation}\label{eq:Mstabeven}
 \mathfrak{k}(M_0)_{\bar0}\big|_U
 =\{LC:C^t=C,\ C\mathbf{y}=0\}.
\end{equation}
Furthermore, the even part is the electrical Lie algebra of type~A \cite{LamPylyavskyy}, realized as $\mathfrak{sl}_n^{(\mathbf a)}$ in \cite{BGG}, of dimension $n(n-1)/2$.
\end{lemma}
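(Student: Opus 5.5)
The plan is to compute the even part directly from the block form \eqref{eq:Mclassicalblocks}, identify it with a space of twisted skew matrices, and match it with the Satake generators \eqref{eq:satakegen} by a dimension count.

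\textbf{Reduction to the majority block.} An even $X\in\mathfrak{gl}(V)$ is block diagonal with respect to $V=U\oplus\C v_r$; write it as $A\oplus t$. On blocks of equal parity the sign in \eqref{eq:st} is trivial, so $X^t=A^t\oplus t$ with $A^t$ the ordinary transpose. Hence $XM_0+M_0X^t=0$ splits into two conditions:
\[
 AL+LA^t=0,\qquad 2tc=0.
\]
Since $c=x_r^2\ne0$, we get $t=0$, so the even part acts trivially on $v_r$. It remains to describe $\{A: AL+LA^t=0\}$. The matrix $L$ is invertible, being lower triangular with diagonal entries $y_a^2\ne0$.

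\textbf{Inclusion and dimension count for \eqref{eq:Mstabeven}.} I will use the identity $L+L^t=2\mathbf y\mathbf y^t$ from the proof of Lemma~\ref{lem:Momega}.

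First take $A=LC$ with $C^t=C$ and $C\mathbf y=0$. Then
\[
 AL+LA^t=L\bigl(CL+CL^t\bigr)=2\,LC\mathbf y\mathbf y^t=0,
\]
which gives the inclusion $\supseteq$.

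For the reverse inclusion I count dimensions rather than solving the equation:
\begin{itemize}
\item The condition $AL+LA^t=0$ says exactly that $AL$ is skew-symmetric. So the solution space is $\{SL^{-1}:S^t=-S\}$, of dimension $n(n-1)/2$.
\item The map $C\mapsto C\mathbf y$ from symmetric matrices to $\C^n$ is surjective, since $\mathbf y\ne0$ (for instance $C=\mathbf w\mathbf y^t+\mathbf y\mathbf w^t$ together with multiples of $\mathbf y\mathbf y^t$ reach all of $\C^n$). Its kernel therefore has dimension $n(n+1)/2-n=n(n-1)/2$.
\item Left multiplication by the invertible $L$ is injective.
\end{itemize}
The two spaces have the same dimension, so the inclusion is an equality. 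This proves \eqref{eq:Mstabeven} and gives $\dim\mathfrak{k}(M_0)_{\bar0}=n(n-1)/2$.

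\textbf{Identification with the electrical algebra.} Here I reuse the Satake computation of Subsection~\ref{sec:satakelower}, with $d$ replaced by $n$ and $x_i$ by $y_i$, acting on $U$ with its constant parity $\eps$. That computation shows that the elements $\rho(x_{\alpha_i})=\sigma_\eps u_iv_i^t$, with mixture parameters \eqref{eq:satakemixL}, satisfy $\rho(x_{\alpha_i})L+L\rho(x_{\alpha_i})^t=0$. Extended by zero on $v_r$, they therefore lie in $\mathfrak{k}(M_0)_{\bar0}$, and so does the Lie algebra they generate. That algebra is $\mathfrak{sl}_n^{(\mathbf a)}$ of \cite{BGG}, with $a_i=-y_{i+1}/y_i$ as stated.

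The step I expect to be the main obstacle is closing this identification. It requires the fact, which I will take from \cite{BGG,LamPylyavskyy}, that the electrical Lie algebra of type $A_{n-1}$ has dimension $n(n-1)/2$. It also requires that the representation $\rho$ is faithful on it, so that its image in $\mathfrak{gl}(U)$ has that dimension. If faithfulness is not stated explicitly in those sources, I would check instead that the span of the iterated commutators of the rank-one matrices $u_iv_i^t$ already has dimension $n(n-1)/2$, for example by induction on $n$. Granting this, the generated subalgebra sits inside $\mathfrak{k}(M_0)_{\bar0}$ and has the same dimension $n(n-1)/2$, so the two coincide.
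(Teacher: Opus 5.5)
Your reduction to the block $U$ (giving $t=0$ on $v_r$) and the inclusion $\supseteq$ are correct and match the paper. The reverse inclusion, however, rests on a false step. The condition $AL+LA^t=0$ is equivalent to skew-symmetry of $AL$ only when $L$ is symmetric. Here $(AL)^t=L^tA^t$ and $L$ is lower triangular, so $AL+LA^t=0$ and $AL+L^tA^t=0$ are different equations.

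Already for $n=2$, $\mathbf y=(1,1)^t$, $L=\left(\begin{smallmatrix}1&0\\2&1\end{smallmatrix}\right)$, this fails. Your candidate $A=SL^{-1}$ with $S=e_{12}-e_{21}$ is $\left(\begin{smallmatrix}-2&1\\-1&0\end{smallmatrix}\right)$, and $AL+LA^t=\left(\begin{smallmatrix}-2&0\\-4&-2\end{smallmatrix}\right)\ne0$.

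Conceptually, your argument would give $\dim\{A:AL+LA^t=0\}=n(n-1)/2$ for every invertible $L$. That is false: for an invertible skew $L$ the solution space is a symplectic algebra of dimension $n(n+1)/2$. The value $n(n-1)/2$ here is special to $L+L^t=2\mathbf y\mathbf y^t$ having rank one, and this upper bound is exactly what the lemma needs. Your identification with $\mathfrak{sl}_n^{(\mathbf a)}$ also concludes equality by comparing dimensions, so it depends on the same missing bound $\dim\mathfrak{k}(M_0)_{\bar0}\le n(n-1)/2$.

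The missing step can be supplied directly, essentially as in the paper.
\begin{enumerate}
\item Put $C=L^{-1}A$. Then $AL+LA^t=L(CL+C^tL^t)$, so $CL+C^tL^t=0$.
\item Substituting $L^t=2\mathbf y\mathbf y^t-L$ gives $(C-C^t)L=-2C^t\mathbf y\mathbf y^t$.
\item The left side is a skew matrix times an invertible one, so its rank is even. The right side has rank at most one. Hence both vanish, so $C=C^t$ and $C\mathbf y=C^t\mathbf y=0$.
\end{enumerate}
The paper runs the same rank-parity argument after conjugating by $D=\operatorname{diag}(y_1,\ldots,y_n)$, so that $L=DTD$ with $T+T^t=2\mathbf 1\mathbf 1^t$. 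It also first extracts $B\mathbf 1=0$ by adding the transposed equation.

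With $\subseteq$ proved this way, your dimension count for $\{LC: C^t=C,\ C\mathbf y=0\}$ goes through. So does the comparison with the dimension $n(n-1)/2$ of $\mathfrak{sl}_n^{(\mathbf a)}$ taken from \cite{BGG}, as the paper does.
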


\begin{proof}
An even element acts by a matrix $A$ on $U$ and by a scalar on $\C v_r$.
The invariance equation gives
\[
 AL+LA^t=0
\]
and makes the scalar vanish.  Write $D=\operatorname{diag}(y_1,\ldots,y_n)$
and $L=DTD$, where $T$ has diagonal entries $1$, entries $2$ below the
diagonal, and zero entries above it.  Thus
$T+T^t=2\mathbf{1}\mathbf{1}^t$.  On writing
$A=DBD^{-1}$, the equation becomes $BT+TB^t=0$.  Adding its transpose shows
$B\mathbf{1}=0$.  If $B=TC'$, then $C'\mathbf{1}=0$, and the same equation,
together with $T+T^t=2\mathbf{1}\mathbf{1}^t$, gives
\[
 (C'-C'^t)T=-2C'^t\mathbf{1}\mathbf{1}^t.
\]
The left-hand side has even rank unless it vanishes, whereas the right-hand side has
rank at most one.  Hence $C'=C'^t$.  Transforming back gives precisely
\eqref{eq:Mstabeven}.  Conversely, if $C^t=C$ and $C\mathbf{y}=0$, then
\[
 (LC)L+L(LC)^t=LC(L+L^t)=2LC\mathbf{y}\mathbf{y}^t=0.
\]
Thus the dimension in \eqref{eq:Mstabeven} is $n(n-1)/2$.

After the above identification of $U$, the matrices of the form
\eqref{eq:satakematrixL} are nonzero scalar multiples of
\[
 e_i+a_i h_i-a_i^2f_i.
\]
They therefore generate the type~A electrical Lie algebra \cite{LamPylyavskyy}, in the realization $\mathfrak{sl}_n^{(\mathbf a)}$ of \cite[Definition~1.1]{BGG}.  That algebra has dimension $n(n-1)/2$ by
\cite[Theorem~1.3]{BGG}; since it fixes $L$, it equals
\eqref{eq:Mstabeven}.  Taking the trace of $AL+LA^t=0$ after multiplication
by $L^{-1}$ gives $\operatorname{tr}A=0$.
\end{proof}

For $z\in W$, set
\begin{equation}\label{eq:Mstabodd}
 \Theta_z=
 \begin{cases}
  \left(\begin{smallmatrix}0&-c^{-1}Lz\\z^t&0\end{smallmatrix}\right),
    &\eps=0,\\[5pt]
  \left(\begin{smallmatrix}0&z^t\\c^{-1}Lz&0\end{smallmatrix}\right),
    &\eps=1.
 \end{cases}
\end{equation}

\begin{lemma}\label{lem:Mstabodd}
The map $z\mapsto\Theta_z$ identifies $W$ with
$\mathfrak{k}(M_0)_{\bar1}$.  For $z,w\in W$,
\begin{equation}\label{eq:Mstabbracket}
 [\Theta_z,\Theta_w]\big|_U
 =-\sigma_\eps c^{-1}L(zw^t+wz^t).
\end{equation}
Consequently,
\[
 \dim\mathfrak{k}(M_0)_{\bar1}=n-1,
 \qquad
 [\mathfrak{k}(M_0)_{\bar1},\mathfrak{k}(M_0)_{\bar1}]
 =\mathfrak{k}(M_0)_{\bar0}.
\]
\end{lemma}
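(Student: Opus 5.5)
The plan is to compute directly in the block decomposition \eqref{eq:Mclassicalblocks}, treating $\eps=0$ in detail and noting the sign changes for $\eps=1$. For $\eps=0$, a general odd element of $\mathfrak{gl}(V)$ has the form $X=\left(\begin{smallmatrix}0&u\\ w^t&0\end{smallmatrix}\right)$ with $u,w\in\C^n$. First I will compute $X^t$ from \eqref{eq:st}. The entries in the column of $v_r$ and in its row acquire different signs, since the sign $(-1)^{(p_i+p_j)p_i}$ depends only on the row index $i$. Then I will write out $XM_0+M_0X^t=0$. Its two diagonal blocks vanish automatically, and the two off-diagonal blocks give two linear equations of the form
\[
 cu=\pm Lw,\qquad w^tL=\pm c\,u^t .
\]
From the first equation, $u=-c^{-1}Lw$ with the correct sign. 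Substituting into the second gives $w^t(L-L^t)=0$ or $w^t(L+L^t)=0$, depending on the signs.

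The main obstacle is this sign bookkeeping, which must produce the second alternative. If the signs come out right, then $L+L^t=2\mathbf{y}\mathbf{y}^t$ (noted in the proof of Lemma~\ref{lem:Momega}) turns the constraint into $(\mathbf{y}^tw)\mathbf{y}^t=0$. Since $\mathbf{y}\ne0$, this means $w\in W$. I will double-check the signs against the parity-swapped case $\eps=1$, where the roles of rows and columns are exchanged. That case should give $\Theta_z$ as in \eqref{eq:Mstabodd}, with the factor $c^{-1}L$ moved to the lower-left block. The result is that $\mathfrak{k}(M_0)_{\bar1}=\{\Theta_z:z\in W\}$. The map $z\mapsto\Theta_z$ is injective, since $z$ appears as a block of $\Theta_z$. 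Because $\mathbf{y}\ne0$, we get $\dim\mathfrak{k}(M_0)_{\bar1}=\dim W=n-1$.

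For the bracket, I will use the supercommutator of odd elements, $[\Theta_z,\Theta_w]=\Theta_z\Theta_w+\Theta_w\Theta_z$. Its $U$-block is the sum of the two products of off-diagonal blocks, which is $-c^{-1}(Lzw^t+Lwz^t)$ for $\eps=0$. For $\eps=1$ the order of the factors produces the extra sign $\sigma_\eps$, and this gives \eqref{eq:Mstabbracket}. As a consistency check, the $v_r$-entry of the bracket is a multiple of $\Omega(z,w)+\Omega(w,z)$, which vanishes by Lemma~\ref{lem:Momega}. This agrees with Lemma~\ref{lem:Mstabeven}, which says even elements act trivially on $v_r$.

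Finally, for the equality $[\mathfrak{k}_{\bar1},\mathfrak{k}_{\bar1}]=\mathfrak{k}_{\bar0}$, I will use \eqref{eq:Mstabeven}: the even part restricted to $U$ is $\{LC:C^t=C,\ C\mathbf{y}=0\}$, and it acts by zero on $v_r$. A symmetric $C$ with $C\mathbf{y}=0$ has image in $\mathbf{y}^{\perp}=W$ and kernel containing $\mathbf{y}$. Hence $C$ is a symmetric tensor in $W\otimes W$, so it lies in the span of the symmetrized products $zw^t+wz^t$ with $z,w\in W$. By \eqref{eq:Mstabbracket}, brackets therefore exhaust the even part. The reverse inclusion holds because $\mathfrak{k}(M_0)$ is a Lie superalgebra. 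As a further check, both sides have dimension $n(n-1)/2=\dim S^2W$.
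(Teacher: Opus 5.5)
Your proposal is correct and follows the paper's own argument. That argument has three steps: a direct block computation of $XM_0+M_0X^t=0$, the block product giving \eqref{eq:Mstabbracket}, and the fact that the matrices $zw^t+wz^t$ span the symmetric $C$ with $C\mathbf{y}=0$. The signs do come out as you hope: for $\eps=0$ one gets $X^t=\left(\begin{smallmatrix}0&w\\-u^t&0\end{smallmatrix}\right)$, hence $cu=-Lw$ and $w^t(L+L^t)=0$. One small correction: the sign $\sigma_\eps$ in the bracket comes from the sign of $c^{-1}L$ in $\Theta_z$ (inherited from the super-transpose), not from the order of the factors.
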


\begin{proof}
Direct block matrix calculation in the two decompositions of
\eqref{eq:Mclassicalblocks} gives \eqref{eq:Mstabodd}.  For $z,w\in W$,
their supercommutator acts trivially on $v_r$ and gives
\eqref{eq:Mstabbracket}.  The matrices $zw^t+wz^t$ span all symmetric
matrices $C$ satisfying $C\mathbf{y}=0$, so \eqref{eq:Mstabbracket} and
Lemma~\ref{lem:Mstabeven} give the last assertion.
\end{proof}

\subsection{A Lie superalgebra model}
In this subsection, we clarify the Lie superalgebra structure of the stabilizer $\mathfrak{k}(M_0)$.
Introduce a graded vector space 
\[
 \mathfrak{S}(W,\widehat\Omega)=W\odot W\oplus W
\]
 with even part $W\odot W$ and  odd part $W$.
Define a $\Z_2$-graded bilinear operation $[-,-]$ on $ \mathfrak{S}(W,\widehat\Omega)$   setting
\[
 [u,v]=u\mathbin{\odot}v,
 \qquad
 [u\mathbin{\odot}v,z]
 =\widehat\Omega(v,z)u+\widehat\Omega(u,z)v,
 \qquad u,v,z\in W,
\]
where $u\mathbin{\odot}v=u\otimes v+v\otimes u$ and extending it to an even--even bracket 
by the induced action on $W\odot W$.

Define linear maps
\[
 \Phi_{\bar0}(z\mathbin{\odot}w)
 =-\sigma_\eps c^{-1}L(zw^t+wz^t),
 \qquad
 \Phi_{\bar1}(z)=\Theta_z
\]
with values in $\End(V)$.
They amount to a graded linear map 
$$\Phi=\Phi_{\bar0}\oplus\Phi_{\bar1}\colon \mathfrak{S}(W,\widehat\Omega)\to \End(V)$$
 with zero kernel.
\begin{proposition}\label{prop:Mstabmodel}
The map $\Phi$ preserves the brackets and takes $\mathfrak{S}(W,\widehat\Omega)$ onto $\mathfrak{k}(M_0)$ thereby establishing an isomorphism
\[
 \mathfrak{S}(W,\widehat\Omega)\simeq\mathfrak{k}(M_0)
\]
of  Lie
superalgebras.
\end{proposition}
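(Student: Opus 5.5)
Our plan is to check three things: $\Phi$ is a bijection onto $\mathfrak{k}(M_0)$, it preserves the odd--odd bracket, and it preserves the even--odd bracket. The even--even bracket then follows formally.

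\textbf{Bijectivity.} We first record the image. By Lemma~\ref{lem:Mstabodd}, $\Phi_{\bar1}$ is an isomorphism of $W$ onto $\mathfrak{k}(M_0)_{\bar1}$. By \eqref{eq:Mstabbracket}, $\Phi_{\bar0}(z\odot w)=[\Theta_z,\Theta_w]$; here we use that the commutator acts trivially on $v_r$, so it is determined by its restriction to $U$. The elements $zw^t+wz^t$ span the symmetric $C$ with $C\mathbf y=0$, which is the space $W\odot W$, so by Lemma~\ref{lem:Mstabeven} $\Phi_{\bar0}$ maps onto $\mathfrak{k}(M_0)_{\bar0}$. Both sides have dimension $n(n-1)/2$, and the kernel of $\Phi$ is zero since $L$ is invertible (its diagonal $y_a^2$ is nonzero). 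Hence $\Phi$ is a graded linear isomorphism onto $\mathfrak{k}(M_0)$.

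\textbf{Odd--odd bracket.} This holds by construction: $\Phi([u,v])=\Phi_{\bar0}(u\odot v)=[\Theta_u,\Theta_v]$ by \eqref{eq:Mstabbracket}.

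\textbf{Even--odd bracket.} This is the main computation. We must show
\[
 [\Phi_{\bar0}(u\odot v),\Theta_z]=\Theta_{\widehat\Omega(v,z)u+\widehat\Omega(u,z)v}.
\]
Take the even element acting by $A=\kappa L(uv^t+vu^t)$ on $U$ and by $0$ on $v_r$, with $\kappa=-\sigma_\eps c^{-1}$. For $\eps=0$, $[A,\Theta_z]$ is the block matrix with upper-right entry $-c^{-1}A Lz$ and lower-left entry $-z^tA$. Using $z^tL=-(Lz)^t$, which holds because $L+L^t=2\mathbf y\mathbf y^t$ and $z\in W$, we get
\[
 z^tA=\kappa\bigl(z^tLu\,v^t+z^tLv\,u^t\bigr)=-\kappa\bigl(\Omega(u,z)v+\Omega(v,z)u\bigr)^t,
\]
so $-z^tA=(\widehat\Omega(u,z)v+\widehat\Omega(v,z)u)^t$. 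This matches the lower-left block of $\Theta_{w'}$ with $w'=\widehat\Omega(v,z)u+\widehat\Omega(u,z)v$. We do not need to compute the upper-right block separately. The bracket lies in $\mathfrak{k}(M_0)_{\bar1}$, and by Lemma~\ref{lem:Mstabodd} an odd element of $\mathfrak{k}(M_0)$ is determined by its $z$-component. The case $\eps=1$ is the same computation, with the upper-right entry $z^t$ in the role of the lower-left one, which explains the sign $\sigma_\eps$ in $\widehat\Omega$. The step we expect to be most delicate is tracking these signs, including that $\Theta_z$ is odd so the supercommutator with an even element is an ordinary commutator. A consistency check is that the result lies in $W$; this holds because $u,v\in W$.

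\textbf{Even--even bracket and conclusion.} The even--even bracket on $\mathfrak{S}$ is defined as the induced action on $W\odot W$, namely $[X,u\odot v]=[X,u]\odot v+u\odot[X,v]$. Using the odd--odd and even--odd cases together with the Jacobi identity in $\End(V)$,
\[
 [\Phi X,[\Theta_u,\Theta_v]]=[[\Phi X,\Theta_u],\Theta_v]+[\Theta_u,[\Phi X,\Theta_v]],
\]
so $\Phi$ preserves this bracket as well. Hence $\Phi$ is a bracket-preserving bijection onto the Lie superalgebra $\mathfrak{k}(M_0)$. This also shows that $\mathfrak{S}(W,\widehat\Omega)$ satisfies the super Jacobi identity, and gives the isomorphism $\mathfrak{S}(W,\widehat\Omega)\simeq\mathfrak{k}(M_0)$.
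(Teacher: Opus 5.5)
Your proposal is correct and follows the same route as the paper's proof: you show $\Phi$ is a linear isomorphism componentwise, take the odd--odd bracket from \eqref{eq:Mstabbracket}, verify the even--odd action formula, and transport the supercommutator. You also supply two steps the paper only asserts: the explicit block computation of $[\Phi_{\bar0}(u\odot v),\Theta_z]$, and the Jacobi-identity argument showing that $\Phi$ preserves the even--even bracket defined as the induced action.
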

{
\begin{proof}
The spanning statement in the proof of Lemma~\ref{lem:Mstabodd} shows that
$\Phi_{\bar0}$ is a vector-space isomorphism from $W\odot W$ onto
$\mathfrak{k}(M_0)_{\bar0}$, while \eqref{eq:Mstabodd} gives the same
conclusion for $\Phi_{\bar1}$.  Equation \eqref{eq:Mstabbracket} gives the
odd--odd bracket.  Moreover, for $u,v,z\in W$, the stabilizer action on the
odd parameters is
\[
 [\Phi_{\bar0}(u\mathbin{\odot}v),\Theta_z]
 =\Theta_{\widehat\Omega(v,z)u+\widehat\Omega(u,z)v}.
\]
The even--even bracket is the induced action on $W\odot W$.  Hence
$\Phi$ is a linear isomorphism preserving all brackets.
Transporting the supercommutator through $\Phi$ proves the graded Jacobi
identity on $\mathfrak{S}(W,\widehat\Omega)$ and the claimed isomorphism.
\end{proof}

\begin{corollary}\label{cor:Mstabilizer}
The stabilizer $\mathfrak{k}(M_0)$ is a Lie subsuperalgebra of
$\mathfrak{sl}(V)$.  If $n$ is odd, then
\[
 \mathfrak{k}(M_0)\simeq\mathfrak{osp}(1|n-1).
\]
If $n$ is even, then the skew-symmetric form $\widehat\Omega$ is degenerate with a one-dimensional kernel.  The even part of $\mathfrak{k}(M_0)$ is
$\mathfrak{sp}_{n-1}$ in the sense of \cite[Theorem~1.7]{BGG}, and $\mathfrak{k}(M_0)$ has a one-dimensional even center.  In both cases it is a natural
Lie-superalgebra extension of the electrical Lie algebra in
\eqref{eq:Mstabeven}.
\end{corollary}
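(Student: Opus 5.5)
The plan is to work entirely inside the model $\mathfrak{S}(W,\widehat\Omega)$ supplied by Proposition~\ref{prop:Mstabmodel}, together with the explicit matrices $\Phi_{\bar0}$ and $\Theta_z$.

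\textbf{Step 1: $\mathfrak{k}(M_0)\subset\mathfrak{sl}(V)$.} For the even part, Lemma~\ref{lem:Mstabeven} already gives that $\mathfrak{k}(M_0)_{\bar0}$ acts by zero on $v_r$. The same lemma shows $\operatorname{tr}A=0$ on $U$, so the supertrace vanishes. Odd elements $\Theta_z$ are block off-diagonal, so their supertrace is zero. Hence $\mathfrak{k}(M_0)\subset\mathfrak{sl}(V)$.

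\textbf{Step 2: identify $\mathfrak{S}(W,\widehat\Omega)$ from the rank of the form.} By Lemma~\ref{lem:Momega}, $\widehat\Omega=-\sigma_\eps c^{-1}\Omega$ is alternating on $W$, and $\dim W=n-1$.

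\emph{Case $n$ odd.} Here $\widehat\Omega$ is nondegenerate on the even-dimensional space $W$, so $(W,\widehat\Omega)$ is a symplectic space of dimension $n-1$. Recall the standard model of $\mathfrak{osp}(1|2m)$: its odd part is the symplectic space $W$, its even part is $\mathfrak{sp}(W)\cong S^2W$, and the bracket is the one in the definition of $\mathfrak{S}$, namely
\begin{itemize}
\item odd--odd: $[u,v]=u\odot v$;
\item even on odd: $(u\odot v)\cdot z=\widehat\Omega(v,z)u+\widehat\Omega(u,z)v$.
\end{itemize}
Identifying the two constructions gives $\mathfrak{S}(W,\widehat\Omega)\simeq\mathfrak{osp}(1|n-1)$. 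Since the even--even bracket is the induced action, this is exactly the commutator in $\mathfrak{sp}(W)$. I only need to check that the map $S^2W\to\mathfrak{sp}(W)$, $u\odot v\mapsto\widehat\Omega(v,\cdot)u+\widehat\Omega(u,\cdot)v$, is an isomorphism; this holds precisely by nondegeneracy. Normalization constants can be absorbed by rescaling $\widehat\Omega$.

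\emph{Case $n$ even.} The kernel is spanned by $r_0$ (its image lies in $W$, as is checked by the alternating sum against $\mathbf y$). I will then find the center. Take the even element $r_0\odot r_0$. It acts on $W$ by $z\mapsto 2\widehat\Omega(r_0,z)r_0=0$. It also acts trivially on $W\odot W$, because the even--even bracket is the induced action. So $r_0\odot r_0$ is central, and this gives a one-dimensional even center.

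To show the center is exactly one-dimensional, choose a complement $W=\C r_0\oplus W'$ with $\widehat\Omega|_{W'}$ nondegenerate. An even element acts on $W$ through the map $S^2W\to\mathfrak{gl}(W)$, whose kernel is $\C\, r_0\odot r_0$. An odd central $z$ would need $[z,w]=z\odot w=0$ for all $w$, forcing $z=0$.

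Also in this case, the even part $S^2W$ acting on the degenerate space $W$ is the algebra denoted $\mathfrak{sp}_{n-1}$ in \cite[Theorem~1.7]{BGG}. I will cite that theorem, matching its description of the electrical Lie algebra for even $n$ (an odd-dimensional space carrying a rank-$(n-2)$ form).

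\textbf{Step 3: extension statement.} This is immediate: Lemma~\ref{lem:Mstabeven} identifies the even part with the electrical Lie algebra, and Lemma~\ref{lem:Mstabodd} shows the odd part generates the even part.

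\textbf{Main obstacle.} The delicate point is matching the even-$n$ case to \cite{BGG}'s $\mathfrak{sp}_{n-1}$. That requires verifying the normalization of the parameters $\mathbf a$ against their definition. My approach is to compare dimensions, $n(n-1)/2$ on both sides, and then use Lemma~\ref{lem:Mstabeven}, which already establishes the electrical realization.
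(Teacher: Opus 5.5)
Your proposal is correct and follows the paper's proof essentially step for step. The steps are: the supertrace argument from Lemma~\ref{lem:Mstabeven} together with the off-diagonal form of $\Theta_z$; Lemma~\ref{lem:Momega} with Proposition~\ref{prop:Mstabmodel} for odd $n$; and for even $n$ the central element $r_0\odot r_0=[r_0,r_0]$ plus the citation of \cite[Theorem~1.7]{BGG}. Your computation that $S^2W\to\mathfrak{gl}(W)$ has kernel exactly $\C\, r_0\odot r_0$, and that there is no odd center, is a self-contained proof that the center is exactly one-dimensional, a fact the paper only cites. The equal-dimension count does not by itself identify the even part with $\mathfrak{sp}_{n-1}$; that identification comes from Lemma~\ref{lem:Mstabeven} combined with the citation.
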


\begin{proof}
The even part has  zero trace  by Lemma~\ref{lem:Mstabeven}, while the odd part
is off-diagonal in \eqref{eq:Mstabodd}; hence
$\mathfrak{k}(M_0)\subset\mathfrak{sl}(V)$.  If $n$ is odd,
Lemma~\ref{lem:Momega} makes $\widehat\Omega$ nondegenerate, and
Proposition~\ref{prop:Mstabmodel} gives the orthosymplectic Lie superalgebra
$\mathfrak{osp}(1|n-1)$.

If $n$ is even, Lemma~\ref{lem:Momega} gives the kernel $\C r_0$.  The
defining brackets of $\mathfrak{S}(W,\widehat\Omega)$ show that
$r_0\mathbin{\odot}r_0=[r_0,r_0]$ spans a central even ideal.  Under
Proposition~\ref{prop:Mstabmodel}, the even part is the algebra
$\mathfrak{sp}_{n-1}$ described in \cite[Theorem~1.7]{BGG}, namely the
subalgebra of $\mathfrak{sp}_n$ annihilating a vector; the one-dimensional
center is noted in the paragraph following that theorem.
\end{proof}
}

\section{The  symmetric grading $(1^m,0^N,1^m)$}\label{sec:symmetricnew}
Assume $m,N>0$.  There are two odd runs, both of length $m$, and one even run of length $N$.
Theorem~\ref{thm:globalinv} specializes as follows.

\begin{corollary}\label{cor:symmetricpaper}
For the grading $(1^m,0^N,1^m)$ with $m,N>0$:
\begin{enumerate}[label=\textup{(\roman*)}]
\item the even family pairing the odd indices, corresponding to \cite[(4.31)]{AMS}, exists if and only if $m$ is even;
\item the even family pairing the even indices, corresponding to \cite[(4.32)]{AMS}, exists if and only if $N$ is even;
\item in addition, a non-even family of one-minority clique matrices supported on all indices exists if and only if $N=1$;
\item if both $m$ and $N$ are odd with $N>1$, there is no invertible solution.
\end{enumerate}
When both $m$ and $N$ are even, the two even families coexist; when $N=1$ and $m$ is even, the odd-paired even family coexists with the non-even family of one-minority clique matrices.
\end{corollary}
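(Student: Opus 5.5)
This is a direct specialization of Theorem~\ref{thm:globalinv}, so the plan is simply to read off the run structure of the grading $(1^m,0^N,1^m)$ and check each part (B) and (C) condition. Since $m,N>0$, the grading is mixed, which rules out part (A). The runs are as follows. $\Ru_1$ consists of the two intervals $[1,m]$ and $[m+N+1,2m+N]$, each of length $m$. $\Ru_0$ consists of the single interval $[m+1,m+N]$, of length $N$.

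Hence $E_1$ holds if and only if $m$ is even, and $E_0$ holds if and only if $N$ is even. Part (B) with $\eps=1$ then gives item (i): the family $K^{(1)}$ of \eqref{eq:invmixed} exists if and only if $m$ is even. Part (B) with $\eps=0$ gives item (ii). To match these with \cite[(4.31)]{AMS} and \cite[(4.32)]{AMS}, I would compare the explicit shape of $K^{(\eps)}$ with the cited matrices. For $K^{(1)}$, this means adjacent skew pairs within each odd run and a diagonal on the even indices; $K^{(0)}$ has the roles of the runs swapped. The comparison is up to a choice of parameter normalization, and I would state it only at the level of "corresponds to".

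For item (iii), part (C) says a non-even invertible solution with $d=2m+N\ge3$ exists exactly when $|I_0|=1$ or $|I_1|=1$. Here $|I_1|=2m\ge2$, so the condition reduces to $|I_0|=N=1$. The solution is then $M_{1,I,r}$ with minority index $r=m+1$ and majority parity $1$. The case $d=2$ cannot occur, since $d\ge 2m+1\ge3$.

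Item (iv) follows by exclusion. If $m$ and $N$ are both odd with $N>1$, then neither $E_0$ nor $E_1$ holds and $N\ne1$, so none of (B) or (C) applies. The closing sentence of Theorem~\ref{thm:globalinv} says there are no other invertible solutions, so none exist here.

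The coexistence remarks are immediate from (i)--(iii): when $m$ and $N$ are both even, (i) and (ii) hold simultaneously, and when $N=1$ with $m$ even, (i) and (iii) hold simultaneously. The only potential obstacle is the identification with the matrices of \cite{AMS}. If their conventions differ from ours, for instance by ordering or by the sign factor $q^{-\sigma_\eps}$, I would reconcile them by a relabelling together with the rescaling permitted in Remark~\ref{rem:nohiddenq}.
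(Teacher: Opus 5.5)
Your proposal is correct and follows the paper's own proof. Like the paper, you read off the runs $[1,m]$ and $[m+N+1,2m+N]$ in $I_1$ and $[m+1,m+N]$ in $I_0$, and apply the criterion $E_\eps$ of Theorem~\ref{thm:globalinv}(B) for (i) and (ii). You then use $|I_1|=2m\ge2$ so that part (C) forces $N=1$ for (iii), and derive (iv) from the ``no other invertible solutions'' clause. Your identification of the minority index $r=m+1$ and your check that $d\ge3$ are accurate. Invoking Remark~\ref{rem:nohiddenq} to reconcile with \cite{AMS} is unnecessary: that remark concerns normalizing residual coefficients, and rescaling $K$ is already permitted because \eqref{eq:TRE} is homogeneous.
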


\begin{proof}
The first two assertions are the run-length conditions in Theorem~\ref{thm:globalinv}.  Since $|I_1|=2m$, one of $I_0,I_1$ has cardinality one only when $N=1$; this is exactly the non-even case in part (iii).  If $m$ and $N$ are both odd with $N>1$, neither even family exists and the non-even family of one-minority clique matrices supported on all indices is also absent.
\end{proof}

If $m=0$ or $N=0$, all basis vectors have the same parity and
Theorem~\ref{thm:globalinv}(A) applies.

\section{Singular solutions}\label{sec:singnew}
\begin{theorem}\label{thm:globalsing}
The singular solutions are exactly:
\begin{enumerate}[label=\textup{(S\arabic*)}]
\item singular diagonal matrices supported on $I_\eps\times I_\eps$
for some $\eps\in\{0,1\}$;
\item two-point lower blocks $B_{ab}(A,B,C)$ with $B\ne0$,
$(A,C)\ne(0,0)$, and either $d>2$ or $AC=0$;
\item clique matrices $L_{\eps,S}(x)$ with $S\subseteq I_\eps$ and $|S|<d$;
\item all nonempty parity-compatible crossing matchings $C_M(c)$;
\item matrices $P_{\eps;M,D}$ with $2|M|+|D|<d$;
\item mixed two-point blocks $Q_{ab}$ when $d>2$;
\item one-minority clique matrices $M_{\eps,S,r}$ with $|S|<d$.
\end{enumerate}
\end{theorem}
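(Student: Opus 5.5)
The plan is to take the complement of the invertible list inside the complete list of Theorem~\ref{thm:structureall}, reading ranks off Lemma~\ref{lem:familyranks}. By Theorem~\ref{thm:structureall}, every solution lies in exactly one of these classes: $P_{\eps;M,D}$; an even lower-triangular family of Theorem~\ref{thm:lower}, that is, a crossing matching $C_M$, a diagonal matrix, $B_{ab}$ or $L_{\eps,S}$, the last three supported in $I_\eps\times I_\eps$; or a non-even family $Q_{ab}$ or $M_{\eps,S,r}$. Conversely, each of these is a solution. So it suffices to decide, family by family, exactly when the rank is less than $d$. The zero matrix is a degenerate diagonal solution, and I will include it in (S1) as the diagonal case with empty support.

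The rank comparisons go as follows.

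\textbf{Diagonal.} A diagonal solution with support in $I_\eps\times I_\eps$ is singular exactly when it is a singular diagonal matrix, which gives (S1). In the mixed case this is automatic, since every index of parity $1-\eps$ has zero diagonal.

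\textbf{Two-point block.} $B_{ab}$ has rank $2$ or $1$, so it is singular iff $d>2$ or $AC=0$, which is (S2).

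\textbf{Clique matrix.} $L_{\eps,S}$ has rank $|S|$, so it is singular iff $|S|<d$, which is (S3).

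\textbf{Crossing matching.} $C_M$ has rank $|M|\le d/2<d$ and is always singular. Nonemptiness is imposed because $C_\varnothing=0$ is already counted in (S1); this gives (S4).

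\textbf{Matrices with upper entries.} $P_{\eps;M,D}$ has rank $2|M|+|D|$, which gives (S5).

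\textbf{Non-even families.} $Q_{ab}$ has rank $2$, so it is singular iff $d>2$, giving (S6). $M_{\eps,S,r}$ has rank $|S|$, giving (S7).

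As a consistency check, the complementary rank-$d$ cases should reproduce Theorem~\ref{thm:globalinv}, which confirms that nothing is lost or double-counted.

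The main obstacle is bookkeeping rather than computation. I would make sure the seven families are the disjoint pieces of the trichotomy in Theorem~\ref{thm:structureall}, and that the only overlap is the zero matrix, handled in (S1). I would also check that the parameter restrictions used in Lemma~\ref{lem:familyranks} (all $x_i, z, y_r, c_r$ nonzero, $B\ne0$, $(A,C)\ne(0,0)$) are exactly those stated in (S1)--(S7). For instance, the upper family requires $M\ne\varnothing$, so $P_{\eps;\varnothing,D}$ is counted as diagonal, which (S1) covers.
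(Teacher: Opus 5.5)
Your proposal is correct and follows the paper's proof: it combines the exhaustive list of Theorem~\ref{thm:structureall} with the rank formulas of Lemma~\ref{lem:familyranks}, notes that crossing matchings are always singular, and treats every other family as singular exactly when its rank is below $d$. Your extra bookkeeping, placing the zero matrix and $P_{\eps;\varnothing,D}$ under (S1), makes explicit what the paper leaves implicit.
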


\begin{proof}
Apply Theorem~\ref{thm:structureall} and Lemma~\ref{lem:familyranks}.
Crossing matchings are always singular; every other family is singular
precisely when its rank is smaller than $d$.
\end{proof}

\noindent
\underline{\large \bf Acknowledgement}

\vspace{10pt}
\noindent
This work is done at the Center of Pure Mathematics MIPT. It is financially supported by Russian Science Foundation grant 26-11-00115.

\noindent
D. Algethami is  thankful to the Deanship of Graduate Studies and Scientific Research at University of Bisha for supporting this work through the Fast-Track Research Support Program.

\vspace{10pt}
\noindent
\underline{\large \bf Data Availability}

\vspace{10pt}

 Data sharing not applicable to this article as no datasets were generated or analysed during the current study.

\subsection*{Declarations}

\underline{\large \bf Competing interests}

\vspace{10pt}
\noindent
The authors have no competing interests to declare that are relevant to the content of this article.


\begin{thebibliography}{A}
\setlength{\itemsep}{2pt}
\bibitem{Cherednik} I. V. Cherednik: {\em Factorizing particles on a half-line and root systems}, Theoret. Math. Phys., {\bf 61} (1984), 977--983.

\bibitem{Sklyanin} E. K. Sklyanin: {\em Boundary conditions for integrable quantum systems}, J. Phys. A, {\bf 21} (1988), 2375--2389.

\bibitem{KulishSklyanin} P. P. Kulish, E. K. Sklyanin: {\em Algebraic structure related to the reflection equation}, J. Phys. A, {\bf 25} (1992), 5963--5975.

\bibitem{KulishSasakiSchwiebert} P. P. Kulish, R. Sasaki, C. Schwiebert: {\em Constant solutions of reflection equations and quantum groups}, J. Math. Phys., {\bf 34} (1993), 286--304.

\bibitem{Letzter} G. Letzter: {\em Symmetric pairs for quantized enveloping algebras}, J. Algebra, {\bf 220} (1999), 729--767.

\bibitem{KolbRE} S. Kolb: {\em Quantum symmetric pairs and the reflection equation}, Algebr. Represent. Theory, {\bf 11} (2008), 519--544.

\bibitem{Kolb} S. Kolb: {\em Quantum symmetric Kac--Moody pairs}, Adv. Math., {\bf 267} (2014), 395--469.

\bibitem{BalagovicKolb} M. Balagovi\'c, S. Kolb: {\em Universal K-matrix for quantum symmetric pairs}, J. Reine Angew. Math., {\bf 747} (2019), 299--353.

\bibitem{RegelskisVlaar} V. Regelskis, B. Vlaar: {\em Quasitriangular coideal subalgebras of $U_q(\mathfrak g)$ in terms of generalized Satake diagrams}, Bull. Lond. Math. Soc., {\bf 52} (2020), 693--715.

\bibitem{AppelVlaar} A. Appel, B. Vlaar: {\em Universal K-matrices for quantum Kac--Moody algebras}, Represent. Theory, {\bf 26} (2022), 764--824.

\bibitem{Yamane} H. Yamane: {\em Quantized enveloping algebras associated with simple Lie superalgebras and their universal R-matrices}, Publ. Res. Inst. Math. Sci., {\bf 30} (1994), 15--87.

\bibitem{Arnaudon} D. Arnaudon, J. Avan, N. Cramp\'e, A. Doikou, L. Frappat, E. Ragoucy: {\em General boundary conditions for the $\mathfrak{sl}(N)$ and $\mathfrak{sl}(M|N)$ open spin chains}, J. Stat. Mech. Theory Exp. (2004), P08005.

\bibitem{LimaSantos} A. Lima-Santos: {\em Reflection matrices for the $U_q[\mathfrak{sl}(m|n)^{(1)}]$ vertex model}, J. Stat. Mech. Theory Exp. (2009), P08006.

\bibitem{Shen} Y. Shen: {\em Quantum supersymmetric pairs and $\imath$Schur duality of type AIII}, J. Algebra, {\bf 661} (2025), 853--903.

\bibitem{ShenWang} Y. Shen, W. Wang: {\em Quantum supersymmetric pairs of basic types}, Commun. Math. Phys., {\bf 406} (2025), Paper 187.

\bibitem{AMS} D. Algethami, A. Mudrov, V. Stukopin: {\em Quantum super-spherical pairs}, J. Algebra, {\bf 674} (2025), 276--313.

\bibitem{AMSsatake} D. Algethami, A. Mudrov, V. Stukopin: {\em Graded Satake diagrams and super-symmetric pairs}, arXiv:2604.03394 [math.QA] (2026).

\bibitem{AMS24} D. Algethami, A. Mudrov, V. Stukopin: {\em Solutions to graded reflection equation of $GL$-type}, Lett. Math. Phys., {\bf 114} (2024), 22.

\bibitem{Zhang} R. B. Zhang: {\em Universal $L$ operator and invariants of the quantum supergroup $U_q\allowbreak(\mathrm{gl}(m/n))$}, J. Math. Phys., {\bf 33} (1992), 1970--1979.

\bibitem{M1} A. Mudrov: {\em Characters of the $U_q(\mathfrak{gl}(n))$-reflection equation algebra}, Lett. Math. Phys., {\bf 60}(3) (2002), 283--291.

\bibitem{NS} M. Noumi, T. Sugitani: {\em Quantum symmetric spaces and related $q$-orthogonal polynomials}, in Group Theoretical Methods in Physics (ICGTMP), World Scientific, River Edge, NJ, 1995, 28--40.

\bibitem{NDS} M. Noumi, M. S. Dijkhuizen, T. Sugitani: {\em Multivariable Askey--Wilson polynomials and quantum complex Grassmannians}, Fields Inst. Commun., {\bf 14} (1997), 167--177.

\bibitem{DoninMudrovEQ} J. Donin, A. Mudrov: {\em Reflection equation, twist, and equivariant quantization}, Israel J. Math., {\bf 136} (2003), 11--28.

\bibitem{LamPylyavskyy} T. Lam, P. Pylyavskyy: {\em Electrical networks and Lie theory}, Algebra Number Theory, {\bf 9}(6) (2015), 1401--1418.

\bibitem{GorbounovTalalaev} V. Gorbounov, D. Talalaev: {\em Electrical varieties as vertex integrable statistical models}, J. Phys. A: Math. Theor., {\bf 53} (2020), 454001.

\bibitem{BGG} A. Berenstein, A. Gainutdinov, V. Gorbounov: {\em Generalized electrical Lie algebras}, Adv. Math., {\bf 478} (2025), 110405.


\end{thebibliography}
\end{document}